\DeclareMathOperator{\curl}{curl}
\newtheorem{theorem}{Theorem}[section]
\newtheorem{lemma}[theorem]{Lemma}
\newtheorem{proposition}[theorem]{Proposition}
\newtheorem{definition}[theorem]{Definition}
\newtheorem{corollary}[theorem]{Corollary}
\newtheorem{remark}[theorem]{Remark}
\newcommand{\thistheoremname}{}
\newtheorem{genericthm}[theorem]{\thistheoremname}
 \newtheorem*{genericthm*}{\thistheoremname}
\newenvironment{namedthm*}[1]
  {\renewcommand{\thistheoremname}{#1}%
   \begin{genericthm*}}
  {\end{genericthm*}}
\newcommand{\dd}{\mathop{}\!\mathrm{d}}
\let\del\partial
\newcommand{\RR}{\mathring R}
\newcommand{\tRR}{\mathring {\widetilde{R}}}
\newcommand{\R}{\mathbb R}
\newcommand{\ZZ}{\mathbb Z}
\newcommand{\NN}{\mathbb N}
\newcommand{\ootimes}{\mathbin{\mathring{\otimes}}}
\DeclareMathOperator{\tr}{Tr}
\newcommand{\uin}[0]{u^{\textup{in}}}
\newcommand{\wpq}[1][q+1]{w^{\textup{(p)}}_{#1}}
\newcommand{\wcq}[1][q+1]{w^{\textup{(c)}}_{#1}}
\newcommand{\twpq}[1][q+1]{\widetilde{w}^{\textup{(p)}}_{#1}}
\newcommand{\wtq}[1][q+1]{w^{\textup{(ns)}}_{#1}}
\newcommand{\wttq}[1][q+1]{w^{\textup{(t)}}_{#1}}
\newcommand{\wttqc}[1][q+1]{w^{\textup{(tc)}}_{#1}}
\newcommand{\twcq}[1][q+1]{\widetilde{w}^{\textup{(c)}}_{#1}}
\newcommand{\twtq}[1][q+1]{\widetilde{w}^{\textup{(ns)}}_{#1}}
\newcommand{\twttq}[1][q+1]{\widetilde{w}^{\textup{(t)}}_{#1}}
\newcommand{\twttqc}[1][q+1]{\widetilde{w}^{\textup{(tc)}}_{#1}}
\newcommand{\PP}[0]{\mathbb{P}_{\neq 0}}
\newcommand{\PH}[0]{\mathbb{P}_{H}}
\newcommand{\Ph}{\mathcal{P}_{ \neq 0}}
\newcommand{\wqloc}{w^\textup{loc}_{q+1}}
\newcommand{\TT}[0]{\mathsf{T}}
\newcommand{\TTT}[0]{\mathbb{T}}
\newcommand{\ii}{\textup{i}}
\newcommand{\ve}{v^{\epsilon}}
\newcommand{\Div}{\text{div}}
\newcommand{\uql}{u^{\text{loc}}_q}
\newcommand{\uqnl}{u^{\text{non-loc}}_q}
\newcommand{\tuql}{\widetilde{u}^{\text{loc}}_q}
\newcommand{\tuqnl}{\widetilde{u}^{\text{non-loc}}_q}
\newcommand{\tulql}{\widetilde{u}^{\text{loc}}_{\ell_q}}
\newcommand{\tulqnl}{\widetilde{u}^{\text{non-loc}}_{\ell_q}}
\newcommand{\uqql}{u^{\text{loc}}_{q+1}}
\newcommand{\uqqnl}{u^{\text{non-loc}}_{q+1}}
\newcommand{\tuqql}{\widetilde{u}^{\text{loc}}_{q+1}}
\newcommand{\tuqqnl}{\widetilde{u}^{\text{non-loc}}_{q+1}}
\newcommand{\uq}{u_q}
\newcommand{\ulql}{u^{\text{loc}}_{\ell_q}}
\newcommand{\ulqnl}{u^{\text{non-loc}}_{\ell_q}}
\newcommand{\ublqnl}{\bar{u}^{\text{non-loc}}_{\ell_q}}
\newcommand{\vq}{v_q}
\newcommand{\vlql}{v^{\text{loc}}_{\ell_q}}
\newcommand{\vlqnl}{v^{\text{non-loc}}_{\ell_q}}
\newcommand{\vdq}{v^{\textup{D}}_q}
\newcommand{\Rem}{R^{\text{rem}}_q}
\newcommand{\LinfB}[1][q+1]{\widetilde{L}^\infty_T B^{\frac{1}{2}}_{2,1}}
\newcommand{\LoB}[1][q+1]{{L}^1_t B^{1/2}_{2,1}}
\newcommand{\LoBt}[1][q+1]{\widetilde {L}^1_t B^{5/2}_{2,1}}
\newcommand{\spt}{\text{spt}}
\newcommand{\vql}{v^{\text{loc}}_q}
\newcommand{\vqnl}{v^{\text{non-loc}}_q}
\numberwithin{equation}{section}
\begin{document}

\title{Non-uniqueness of weak solutions to the Navier-Stokes
equations in $\R^3$}

\author{Changxing Miao}
\address[Changxing Miao]{Institute  of Applied Physics and Computational Mathematics, Beijing, China.}

\email{miao\_changxing@iapcm.ac.cn}

\author{Yao Nie}

\address[Yao Nie]{School of Mathematical Sciences and LPMC, Nankai University, Tianjin, China.}

 \email{nieyao@nankai.edu.cn}

\author{Weikui Ye}

\address[Weikui Ye]{School of Mathematical Sciences, South China Normal University, Guangzhou,  China}

 \email{904817751@qq.com}


\date{\today}
\maketitle

\begin{abstract}
To our knowledge, the convex integration method has been widely applied to the study of  non-uniqueness of solutions to the Naiver-Stokes equations in the periodic region, but there are few works {on applying this method} to the corresponding problems in the whole space or other regions. In this paper, we  prove that weak solutions of the Navier-Stokes   equations  are not unique in the class of weak solutions with ﬁnite kinetic energy in the whole space, which extends the  non uniqueness result for the Navier-Stokes equations on torus $\TTT^3$ in  the  groundbreaking work (Buckmaster and Vicol, Ann. of Math., 189 (2019), pp.101-144) to $\R^3$. The critical ingredients of the proof include developing an iterative scheme in which the approximation solution is refined by decomposing it into local and non-local  parts. For the non-local part, we  introduce  the localized corrector {which} plays a crucial role in balancing  the compact support of the Reynolds stress error with the non-compact support of the solution. As applications of this argument, we first prove that  there exist infinitely many weak solutions that dissipate the kinetic energy in smooth bounded domain. Moreover, we {show} the instability of the Navier-Stokes equations  near Couette flow in $L^2(\R^3)$.
\end{abstract}
\emph{Keywords}: Navier-Stokes equations, Non-uniqueness, Convex integration method, Localization argument.

\emph{Mathematics Subject Classification}: 35Q30,~76D03.

\section{Introduction}
In this paper, {we consider the incompressible  Navier-Stokes equations in $\R^3$}:
\begin{equation}
\left\{ \begin{alignedat}{-1}
&\del_t u-\Delta u+(u\cdot\nabla) u  +\nabla p   =  0,
 \\
&  \nabla \cdot u  = 0,
\end{alignedat}\right.  \label{NS}\tag{NS}
\end{equation}
 where $u: \R^3\times [0,T]\to\R^3$ denotes the velocity of the incompressible fluid, $p:\R^3\times [0,T]\to\R$ the pressure field. We are focused on the non-uniqueness of weak solutions to \eqref{NS}, where the weak solutions are defined as follows:
 \begin{definition}[Weak solution]\label{def-weak}We say $u\in C([0, T);L^2(\R^3))$ to be a weak
solution of  the Navier-Stokes equation \eqref{NS}  if the vector field  $u(\cdot, t)$ is  weakly divergence-free
for all $t\in [0, T)$ and for all divergence-free test functions $\phi\in C^\infty_0(\R^3\times(0, T))$,
     \begin{align}\nonumber
\int_0^T \int_{\R^3} (\del_t-\Delta)\,\phi u+\nabla \phi : u\otimes u \dd x \dd t=0,
\end{align}where $\nabla \phi : u\otimes u=\partial_i\phi_ju_iu_j$ under the Einstein summation convention.
 \end{definition}
 \subsection{Previous works and our main results}
In the pioneer paper  \cite{L}, Leray proved that for any initial  datum $u_0\in L^2(\R^3)$, there exists a  global weak solutions to the equations \eqref{NS} in $C_w([0, T]; L^2(\R^3)) \cap L^2([0, T]; \dot{H}^1(\R^3)) $ with the energy inequality
$$\|u(t)\|_{L^2}^{2}+2\int_{0}^{t}\|\nabla u(s)\|_{L^2}^{2}{\rm d s}\leq \|u_0\|_{L^2}^{2},\quad \forall \,t\in[0,T].$$
And this result was further developed by Hopf \cite{Hopf} in smooth bounded domain, with Dirichlet boundary conditions. This class of weak solutions is now called as \textit{Leray-Hopf weak solutions}. As discussed in \cite{BV-19, Hopf, JS,  Lady2}, the uniqueness for Leray-Hopf weak solutions of the equations \eqref{NS} remains an important open problem in the theory of the Navier-Stokes
 equations.

 In order to have a clearer understanding of how far we are from solving this challenging problem, many researchers are dedicated to finding sufficient conditions that guarantee the uniqueness of  Leray-Hopf weak solutions. The Serrin type criterions established by \cite{ESS, KS, Lady, Prodi, Serr} shows that the Leray-Hopf weak solution is unique in $L^p_tL^q_x$ with $\frac{2}{p}+\frac{3}{q}\le 1$. In fact, Fabes et al. in \cite{FJR} proved that this Serrin type condition except for $q=3$ also plays the role of a uniqueness criterion  for \textit{very weak solution} of the equations~\eqref{NS}.   The  uniqueness in the  endpoint case $C_tL^3_x$ for {very weak solution} was shown by Furioli et al.  in \cite{FLT}.

 In recent years,  there have been major progresses in the non-uniqueness problems for weak solutions, and various works
  toward non-uniqueness of Leray-Hopf weak solutions.  Jia and  \v{S}ver\'{a}k \cite{JS14, JS} showed the  non-uniqueness of Leray-Hopf weak solutions in $L^\infty_tL^{3,\infty}_x$ provided that a spectral condition holds.  Guillod and \v{S}ver\'{a}k \cite{GS} give convincing numerical evidence of this spectral condition. Recently, Albritton, Bru\'{e} and Colombo \cite{ABC} proved the non-uniqueness of the Leray-Hopf weak solutions for the forced 3D Navier-Stokes equations by adapting properly Vishik's
construction into the cross section of an axisymmetric vortex ring. In addition to approaches based on spectral analysis, many studies have explored the non-uniqueness of weak solutions by methods involving convex integration. In the  breakthrough work \cite{BV}, Buckmaster and Vicol obtained non-unique distributional solutions of the Navier-Stokes
equations in $C_tL^2(\TTT^3)$. They developed a new
convex integration scheme in Sobolev spaces using intermittent Beltrami flows which combined concentrations and oscillations. Furthermore, they jointly with Colombo \cite{BCV} presented the first example of a mild/weak solution to the Navier-Stokes equation on torus whose singular set of times  is  nonempty, and has Hausdorff dimension strictly less than 1.  Luo and Titi \cite{LT} proved the non-uniqueness of weak solutions in $C_tL^2(\TTT^3)$ for
 the Navier–Stokes equations with fractional hyperviscosity $(-\Delta)^{\theta}$, whenever the exponent $(-\Delta)^{\theta}$ is less than $5/4$. For the stochastic 3D Navier-Stokes equations, Hofmanov\'{a}, Zhu and Zhu \cite{zhu1,zhu2,zhu3} {established a series of works on the global existence and non-uniquenes.} Remarkably, Cheskidov and Luo \cite{1Cheskidov} proved the nonuniqueness of very weak solutions in the class $L^p_tL^q(\TTT^3)$ for $1\le p<2$ by exploiting the
temporal intermittency in the convex integrations scheme, and this result implies the sharpness of the Ladyzhenskaya-Prodi-Serrin criteria $\frac{2}{p}+\frac{3}{q}\le 1$ at the endpoint $(p,q)=(2,\infty)$. The approach involving convex integration also has been applied to various other models.  For instance,  the Euler equations   \cite{Buc, BDIS15, BDS, DS17, DRS, DS09, DS14, zhu4, Ise17, Ise18, Ise22} and references therein, the stationary Navier-Stokes equations \cite{Luo},  the transport equations
\cite{BCD, CL21, CL22, MoS, MS}, the MHD equations \cite{2Beekie, LZZ, MY}, and the Boussinesq equations \cite{MNY, TZ17, TZ18}.

The previous works (e.g. \cite{BCV, BV, 1Cheskidov}) on the non-uniqueness of weak solutions for the Navier-Stokes equations within the convex integration scheme are established for the periodic case, and the iterative scheme in these works cannot be directly applied to the Cauchy problem in $\R^3$, etc.

In this paper, we aim to develop a new convex integration scheme to construct non-unique  weak solutions of \eqref{NS} with finite kinetic energy in $\R^3$:
\begin{theorem}[Non-uniqueness of weak solutions]\label{t:main}
Let $T>0$, $e(t),\widetilde{e}(t): [0,T]\to (0,\infty)$ be two nonnegative smooth functions with $e(t)=\widetilde{e}(t)$ for $t\in[0, \tfrac{T}{2}]$. Then there exist  weak solutions $ u, \widetilde{u}  \in C([0,T];L^2(\R^3))$ in the sense of Definition \ref{def-weak} for \eqref{NS} with $u(0,x)=\widetilde{u}(0,x)$ and
$$\|u(t)\|^2_{L^2(\R^3)}=e(t), ~~~\|\widetilde{u}(t)\|^2_{L^2(\R^3)}=\widetilde{e}(t).$$
\end{theorem}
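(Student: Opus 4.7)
My plan is to adapt the Buckmaster--Vicol convex integration scheme from the periodic setting to $\R^3$, building inductively a sequence $(u_q, p_q, R_q)_{q \geq 0}$ of smooth approximate solutions to the Navier-Stokes-Reynolds system
\[
\partial_t u_q - \Delta u_q + \D(u_q \otimes u_q) + \nabla p_q = \D R_q, \qquad \nabla \cdot u_q = 0,
\]
together with the quantitative inductive hypotheses (i) $R_q(\cdot,t)$ is compactly supported in $\R^3$ for every $t$, (ii) $\|R_q\|_{L^1_{t,x}} \leq \delta_{q+1}$ for a summable sequence $\delta_q \downarrow 0$, and (iii) $|e(t) - \|u_q(t)\|_{L^2}^2| \lesssim \delta_{q+1} e(t)$. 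Passing to the limit yields $u = \lim_q u_q \in C([0,T]; L^2(\R^3))$, a weak solution with $\|u(t)\|_{L^2}^2 = e(t)$ in the sense of Definition \ref{def-weak}.

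At each step I take a perturbation of the form $w_{q+1} = \wpq + \wcq + \wttq + \wlq$, where $\wpq$ is an intermittent Mikado/Beltrami-type flow, concentrated near $\mathrm{supp}\, R_q$, whose low-frequency oscillating self-interaction cancels $R_q$; $\wcq$ restores incompressibility; $\wttq$ is the temporal corrector absorbing the slow transport errors; and $\wlq$ is the localized corrector forced by the whole-space geometry. The amplitude of $\wpq$ is tuned so that the energy increment $\|u_{q+1}\|_{L^2}^2 - \|u_q\|_{L^2}^2$ tracks $e(t) - \|u_q\|_{L^2}^2$ pointwise in $t$; combined with (ii) this is what locks the limit energy onto $e(t)$.

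The central obstacle, and the reason for introducing $\wlq$, is the following. Among the errors generated by $w_{q+1}$, the Nash-type term $\D(u_q \otimes w_{q+1} + w_{q+1} \otimes u_q)$ couples $w_{q+1}$ (spatially compactly supported near $\mathrm{supp}\, R_q$) with $u_q$, which is only $L^2(\R^3)$ and has no compact support. On $\TTT^3$ this is a non-issue because everything is periodic, but on $\R^3$ it is fatal to inductive hypothesis (i). To remedy this I decompose $u_q = \uql + \uqnl$, where $\uql$ is localized to a spatial neighborhood of $\mathrm{supp}\, R_q$ and $\uqnl$ collects the far tails. Interactions of $w_{q+1}$ with $\uql$ are spatially compact and handled essentially as in the periodic scheme. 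The interaction with $\uqnl$ is the dangerous one; here I choose the localized corrector $\wlq$ as a divergence-free field supported near $\mathrm{supp}\, R_q$ whose cross term with $\uqnl$, after an inverse-divergence argument against $\wpq \otimes \uqnl$, produces a compactly supported remainder that can be absorbed into $R_{q+1}$. Establishing the $L^1$ and support estimates for this inverse-divergence representation is, I expect, the main technical hurdle, since standard Calder\'on--Zygmund-type inverses do not preserve compact support and must be replaced by a careful truncation-and-localization construction.

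Finally, for the coupled conclusion $u(0,x) = \widetilde u(0,x)$ with the two prescribed energies agreeing on $[0,T/2]$, I would first run the iteration above producing $u$ with $\|u(t)\|_{L^2}^2 = e(t)$ on $[0, T]$, then run a second parallel iteration on $[0, T]$ targeting $\widetilde e(t)$, where the amplitudes $\widetilde a_{q+1}$ are forced to coincide with those of the first iteration on $[0, T/2]$ via a temporal cutoff (this is consistent precisely because $e = \widetilde e$ there). By induction the two sequences satisfy $u_q = \widetilde u_q$ on $[0, T/2]$, so in the limit $u = \widetilde u$ on $[0, T/2]$ and in particular $u(0,\cdot) = \widetilde u(0,\cdot)$, while the two limits carry the prescribed distinct energies on $[T/2, T]$. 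The verification that the localized decomposition and the $\wlq$-estimates remain uniform through this bifurcation follows from the purely spatial nature of the localization construction.
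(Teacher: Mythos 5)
Your overall skeleton --- the Navier--Stokes--Reynolds iteration with compactly supported $\RR_q$, the decomposition $u_q=\uql+\uqnl$, intermittent building blocks, energy pumping, and temporal locality of the construction to force $u=\widetilde u$ near $t=0$ --- matches the paper. But at the one point where the whole-space geometry actually bites, your proposal has a genuine gap. First, the term you single out as fatal, the Nash interaction of the compactly supported part of the perturbation with $\uqnl$, is in fact harmless: $\wqloc\ootimes u_{\ell_q}+u_{\ell_q}\ootimes\wqloc$ is already a compactly supported symmetric tensor (because $\wqloc$ is compactly supported) and goes directly into $\RR_{q+1}$, with $L^1$-smallness coming from the intermittency of $\wqloc$. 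The genuinely dangerous terms are different: the mollification commutator $\Rem$, which is not compactly supported because $u_q$ is not, and the residuals of the oscillation and time-derivative errors (e.g.\ $\tfrac{1}{\mu}\partial_t a^2_{(k,q)}\PP(\phi^2_{k,\bar k,\bar{\bar k},\sigma}\bar k)$, the tails $E_{(N)}$, and $\partial_t\wttqc$), which are small only in a negative-regularity norm and are not divergences of compactly supported tensors. On $\TTT^3$ one applies the inverse divergence to them; your proposed ``truncation-and-localization'' of the inverse divergence does not close the argument, because multiplying $\D^{-1}G$ by a spatial cutoff and taking the divergence reproduces $G$ only up to a commutator with the gradient of the cutoff, and that commutator is again a non-divergence-form error of exactly the type you started with --- the scheme is circular as described.

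The paper's resolution, which is its main new idea and which your proposal does not contain, is to abandon the inverse divergence for these terms altogether: they are collected into a forcing $F_{q+1}$, shown to be small in $\widetilde L^\infty_t B^{-3/2}_{2,1}$ (Proposition \ref{est-Fq}), and cancelled \emph{exactly} by defining the localized corrector $\wtq$ as the solution of the forced Navier--Stokes-type Cauchy problem \eqref{e:wt}. Crucially, $\wtq$ is \emph{not} compactly supported --- contrary to your description of the corrector as ``supported near $\supp R_q$'' --- and it is placed in the non-local component $\uqqnl=\ulqnl+\wtq$, which carries no support constraint; only $\uqql$ and $\RR_{q+1}$ must be compactly supported. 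This also matters for your bifurcation step: agreement of the two iterations on an initial time interval is propagated through the corrector by uniqueness of the mild solution of \eqref{e:wt}, not by a temporal cutoff of the amplitudes; the paper needs no such cutoff because the mollifier looks only $\ell_q$ forward in time, so the agreement interval shrinks by a controlled amount at each step (Proposition \ref{p:main-prop2}).
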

By choosing $e(t)$ and $\widetilde{e}(t) $ such that $e(t)\neq \widetilde{e}(t)$ for $ t> \tfrac{T}{2}$, Theorem  \ref{t:main} immediately shows
that there exist infinitely many weak solutions in $C_tL^2(\R^3)$ for \eqref{NS} with the same initial data. Particularly, weak solutions that dissipate the kinetic energy in $C_tL^2(\R^3)$  are not unique via choosing monotonically non-increasing functions
$e(t)$ and $\widetilde{e}(t)$. So we immediately have the following corollary.
\begin{corollary}\label{t:main0}
There exist infinitely many weak solutions $u\in C([0,T];L^2(\R^3))$ that dissipate the kinetic energy for the system  \eqref{NS}.
\end{corollary}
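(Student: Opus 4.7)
The plan is to derive the corollary directly from Theorem \ref{t:main} by exhibiting an infinite family of admissible monotonically non-increasing energy profiles and invoking that theorem once for each.

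First, I would fix a smooth, strictly positive, strictly decreasing reference profile $e:[0,T]\to(0,\infty)$, for instance $e(t) = 2 - t/T$. Next, for a parameter $\lambda$ ranging over a small interval $(0,\lambda_0)$, I would set
\[
\widetilde{e}_\lambda(t) := e(t) - \lambda\chi(t),
\]
where $\chi\in C^\infty_c((T/2,T))$ is a fixed nonnegative smooth bump with $0\le\chi\le 1$. For $\lambda_0$ small enough (depending only on the slope of $e$ and on $\|\chi'\|_{L^\infty}$), each $\widetilde{e}_\lambda$ is smooth, strictly positive on $[0,T]$, agrees with $e$ on $[0,T/2]$, and remains monotonically non-increasing on $[0,T]$, since subtracting a sufficiently small $\lambda\chi$ cannot reverse the strictly negative slope of $e$.

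Applying Theorem \ref{t:main} with the pair $(e,\widetilde{e}_\lambda)$ produces weak solutions $u_\lambda,\widetilde{u}_\lambda \in C([0,T];L^2(\R^3))$ satisfying $\|u_\lambda(t)\|_{L^2}^2=e(t)$ and $\|\widetilde{u}_\lambda(t)\|_{L^2}^2=\widetilde{e}_\lambda(t)$. Since both profiles are monotonically non-increasing, each of these solutions dissipates the kinetic energy. Moreover, distinct values of $\lambda$ yield profiles $\widetilde{e}_\lambda$ that differ pointwise on $(T/2,T]$, so the solutions $\{\widetilde{u}_\lambda\}_{\lambda\in(0,\lambda_0)}$ have pairwise distinct $L^2$-norm functions and are therefore pairwise distinct elements of $C([0,T];L^2(\R^3))$, yielding an uncountable family of dissipating weak solutions. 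There is no genuine obstacle in this argument: all of the analytic difficulty is already absorbed into Theorem \ref{t:main}, and the only task here is to produce energy profiles satisfying its hypotheses that are monotonically non-increasing and mutually distinct, both of which are met by the construction above.
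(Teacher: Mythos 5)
Your proposal is correct and follows exactly the route the paper takes: the paper deduces the corollary from Theorem \ref{t:main} by choosing monotonically non-increasing profiles $e$ and $\widetilde{e}$ that agree on $[0,T/2]$ and differ afterwards. Your explicit one-parameter family $\widetilde{e}_\lambda=e-\lambda\chi$ is just a concrete instantiation of that same argument, with the correct checks (positivity, monotonicity for small $\lambda$, agreement on $[0,T/2]$, and pairwise distinctness of the resulting solutions via their energy profiles).
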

\subsection{Main ideas}  The proof of Theorem \ref{t:main} draws upon several  fundamental ideas  introduced by the work  \cite{BV} of   Buckmaster and Vicol, which was established in the periodic setting.  In constructing nonperiodic solutions in $\R^3$, we cannot proceed with the iterative scheme as in \cite{BV}, since we have to confront the issue that the Reynolds stress may not maintain the property of spatial compactly support, which did not arise in the previous work in the periodic setting. However, due to the limitations of current geometric lemma, it seems necessary for the Reynolds error to be with compact support  at each iteration step. This compels us to develop a new iterative scheme which enables us to control the support of the Reynolds stress  within the context of the whole space $\R^3$.

To ensure that the Reynolds tensor  has both compact support and divergence form at the next step, we cannot use the the inverse of the divergence operator due to the fact that a function with compact support, when acted upon by the inverse of the divergence operator,  may not retain compact support in the whole space. {Another difficulty comes from the fact that the absence of compact support in the approximate solution hinders the maintenance of the compact support property for the Reynolds tensor at the next step.} In this paper, we develop a new iterative scheme to ensure the compatibility between the Reynolds stress error with compact support, divergence-form, and the non-compact support property of the approximate solution  at each step.

To design a suitable iterative scheme, we divide the approximation solutions into local, and non-local parts, and introduce a \textit{localized corrector} for balancing the non-compact support and non-divergence form parts of the new tensor error. For the local and non-local parts, we construct different perturbations to proceed with the iteration.
\begin{enumerate}
    \item For given $u^{\text{loc}}_q$ with compact support,  the local part of the approximate solution $u_q$, we construct the {perturbation} $\wqloc$ with devergence-free and compact-support, based on the intermittent building blocks in  \cite{BV, MY} such that $u^{\text{loc}}_{q+1}=u^{\text{loc}}_{\ell_q}+\wqloc$. More precisely,  $\wqloc$ is composed by  four parts:

    \begin{enumerate}
     \item The principle perturbation $\wpq$, which is constructed by using the box flows as in ~\cite {MY};
     \item The incompressibility corrector $\wcq$, which is introduced to correct principal perturbation $\wpq$ to satisfy the incompressibility condition;
    \item The temporal corrector $\wttq$. We design it for cancelling  the extra errors with the traveling-wave property.
To ensure that  $\wttq$ has compact support, its construction is slightly different from that in \cite{BV, MY} and it is not divergence-free.
\item The incompressibility corrector $\wttqc$. It is given to correct  $\wttq$ to enforce divergence-free condition.
    \end{enumerate}

\item
For given $\uqnl$ without compact support, the remaining part of $u_q$, we introduce a \textit{localized corrector} $\wtq$ such that {$u^{\text{non-loc}}_{q+1}=u^{\text{non-loc}}_{\ell_q}+\wtq$}. As previously analyzed, without using the divergence inverse operator and the non-compact support property of the approximate solution {give rise to} some new tensor errors with non-divergence and non-compact support. Benefiting from the high frequency of flows, as shown in Lemma~\ref{tracefree}, one could expect that these  errors are small. This observation inspires us to treat these errors as  a forcing terms  of a incompressible Navier-Stokes equations (see \eqref{e:wt} below), and to define  the localized corrector $\wtq$  to be the solution of such equations. Subsequently, the localized corrector $\wtq$ cancels these errors with non-divergence or non-compact support. This leads to the new Reynolds tensor error  maintaining a compact support, allowing the iteration to proceed successfully.
\end{enumerate}

To our knowledge,  this is the first work of  constructing a non-periodic weak solution to the Navier-Stokes equations through convex integration  method. The new iterative scheme developed in this paper can be also applied for other related issues to the Navier-Stokes equations or other viscous models. 

\subsection{Applications to other problems}
As a matter of fact, the main idea of developing a new iterative scheme via refining approximation solutions is applicable not only to the whole space, but also to general domains. For instance, we present the non-uniqueness result of weak solutions  for the Navier-Stokes equations in smooth bounded domain. Furthermore, we also present a result of instability near the Couette flow.


We are in position to state our two applications.
\subsubsection{Application \uppercase\expandafter{\romannumeral1}: The non-uniqueness result in smooth bounded domain } We consider the incompressible Navier-Stokes equation in smooth  bounded domain $\mathcal{D}$:
 \begin{equation}
\left\{ \begin{alignedat}{-1}
&\del_t v-\Delta v+(v\cdot\nabla) v  +\nabla p   =  0,\quad{\rm in}\quad {\mathcal{D}}\times(0,T],
 \\
&  \nabla \cdot v  = 0, \quad\qquad\qquad \qquad \qquad\,\,\,\, \, {\rm in}\quad  {\mathcal{D}}\times(0,T],\\
& v=0,  \quad\qquad\qquad \qquad \qquad \quad \, \, \,\, \,  \, \, \, \,  {\rm on}\quad  {\partial\mathcal{D}}\times[0,T].
\end{alignedat}\right.  \label{DNS}
\end{equation}

\begin{theorem}\label{dns}There exist infinitely many weak solutions in $C([0,T];L^2(\mathcal{D}))$ that dissipate the kinetic energy for the \eqref{DNS}.
\end{theorem}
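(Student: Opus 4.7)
\medskip

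The plan is to reduce Theorem \ref{dns} to the iterative machinery developed for Theorem \ref{t:main}, observing that the bounded-domain problem is in fact \emph{easier}: we do not need a non-local part at all. I would therefore run a simplified version of the scheme outlined in the introduction, in which the approximate solution $v_q$ is kept compactly supported strictly inside $\mathcal{D}$ at every iteration step, so that the Dirichlet boundary condition is satisfied trivially by extension by zero.

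First, fix a smooth exhaustion $\mathcal{D}_0 \Subset \mathcal{D}_1 \Subset \cdots \Subset \mathcal{D}$ with $\bigcup_q \mathcal{D}_q = \mathcal{D}$, and choose the initial approximation $v_0$ to be smooth, divergence-free, compactly supported in $\mathcal{D}_0$, and to satisfy the prescribed monotone non-increasing energy profile $e(t)$ on $[0,T]$ up to a small error; the associated Reynolds stress $R_0$ is then also compactly supported in $\mathcal{D}_0$. Because the building blocks $\wpq$, $\wcq$, $\wttq$, $\wttqc$ described in the Main Ideas section are manufactured from cutoffs multiplied by intermittent flows whose spatial scale is $\lambda_{q+1}^{-1} \to 0$, I would insert an extra fixed cutoff $\chi_{\mathcal{D}_q}$ supported in $\mathcal{D}_q$ and equal to $1$ on $\mathcal{D}_{q-1}$, so that each perturbation $w_{q+1}$ is supported in $\mathcal{D}_q$. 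Setting $v_{q+1} = v_{\ell_q} + w_{q+1}$, the resulting new Reynolds stress also lies in $\mathcal{D}_q$, and crucially, since we are in a bounded domain there is no obstruction from the inverse-divergence operator losing compact support at infinity: standard anti-divergence operators on $\R^3$ applied to a function supported in $\mathcal{D}_q$ produce an object decaying at infinity, which we may simply multiply once more by a cutoff, absorbing the resulting error into the new Reynolds stress and showing it is small by the same high-frequency Lemma~\ref{tracefree}-type argument used in the whole-space construction (or more simply, truncated by a further localized corrector $\wtq$ as in the main scheme).

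With these cutoffs in place, all the estimates of the whole-space iteration go through verbatim, since the building blocks and all corrector bounds are translation-invariant, and the cutoffs $\chi_{\mathcal{D}_q}$ are uniformly smooth. The energy-control argument from Theorem \ref{t:main} then produces a limit $v = \lim_q v_q \in C([0,T];L^2(\R^3))$ with $\Supp v(\cdot,t) \Subset \mathcal{D}$ for each $t$, so that $v \in C([0,T];L^2(\mathcal{D}))$ is a weak solution of \eqref{DNS} with $\|v(t)\|_{L^2(\mathcal{D})}^2 = e(t)$. Choosing $e(t)$ to be any strictly decreasing smooth function yields a kinetic-energy-dissipating weak solution; by varying $e(t)$ over a one-parameter family of monotone non-increasing profiles, one obtains infinitely many such solutions, proving the theorem.

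The main obstacle I anticipate is handling the interaction between the anti-divergence (which is inherently non-local) and the demand that supports stay inside $\mathcal{D}$; this is precisely the role of the localized corrector $\wtq$ introduced for the whole-space case, and the chief bookkeeping task is verifying that in the bounded-domain adaptation the resulting correction remains $L^2$-small uniformly in $q$, so that the inductive $C_t L^2$ estimate and the support-containment $\Supp v_q(\cdot,t)\subset \mathcal{D}_q \Subset \mathcal{D}$ can be propagated simultaneously. Once this compatibility is established, no new analytic ingredient is needed beyond what was already developed for Theorem \ref{t:main}.
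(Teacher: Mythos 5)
Your overall strategy — run the whole-space iteration inside a fixed compact region of $\mathcal{D}$ and satisfy the boundary condition by support containment — is close in spirit to the paper's, but it contains a genuine gap: the claim that the full approximate solution $v_q$ can be kept compactly supported in $\mathcal{D}_q\Subset\mathcal{D}$ is not propagable. The obstruction is exactly the one the paper isolates in the introduction. The non-divergence-form, non-compactly-supported errors produced by the oscillation and time-derivative terms cannot be removed by ``multiplying the anti-divergence by a cutoff and absorbing the error into the new Reynolds stress'': the commutator created by that cutoff is \emph{not} in divergence form, and rewriting it in divergence form requires another inverse divergence, which is non-local again. The only mechanism available (and the one you fall back on in your last sentence) is the localized corrector $\wtq$, defined as the solution of a \emph{forced parabolic system}. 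But a solution of a forced Navier--Stokes/heat-type system is never compactly supported — the semigroup spreads support instantaneously — so once $\wtq$ is added, $\Supp v_{q+1}(\cdot,t)\Subset\mathcal{D}_{q}$ fails, and with it your entire support-bookkeeping and your argument that the Dirichlet condition holds ``trivially by extension by zero.''

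The paper's proof resolves this by \emph{not} insisting on compact support of the full solution. It fixes a single cube $\Omega=[-\tfrac12,\tfrac12]^3\subsetneqq\mathcal{D}$ (no exhaustion is needed, and none would be compatible with the cube-adapted box flows), decomposes $v_q=\vql+\vdq$, keeps only the local part $\vql$ and the Reynolds stress compactly supported in $\Omega_q\subset\Omega$, and defines the corrector $\vdq$ (respectively $\wtq$ at each step) as the solution of the forced system \eqref{wtq-NSD} posed on $\mathcal{D}$ with \emph{homogeneous Dirichlet boundary conditions}, using Fujita--Kato local well-posedness in $H^1_0(\mathcal{D})$. The boundary condition for $v_{q+1}$ is then satisfied because $\wtq$ vanishes on $\partial\mathcal{D}$ by construction, not because it has compact support. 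If you replace your support-propagation claim by this two-component decomposition (compactly supported local part plus a small Dirichlet part controlled in $L^\infty_tH^1_0(\mathcal{D})$), the rest of your outline — reusing the whole-space building blocks, estimates, and energy pumping — matches the paper's argument.
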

Here a vector field $v\in C([0,T];L^2(\mathcal{D}))$ is called a weak solution to  \eqref{DNS} if it solves  \eqref{DNS}  in the sense of distribution. {It seems to be the first result on  constructing}  non-unique weak solutions of the Navier-Stokes equations in a bounded non-periodic domain via  the convex integration method.
The proof is provided in Section \ref{proof of application}. The readers will see that the iterative scheme used in the proof differs slightly from the previous one in the proof of Theorem \ref{t:main}, {and thereby} this iterative scheme has a certain degree of flexibility and allows us to address different issues.

\subsubsection{Application \uppercase\expandafter{\romannumeral2}: The instability of the system \eqref{NS} near Couette flow }  Let $U=(x_2,0,0)\in\R^3$ be  the 3D  Couette flow. Employing the iterative scheme in this paper {enables us to obtain} the following instability result for the incompressible Navier-Stokes equations \eqref{NS} near the Couette flow  in $L^2(\R^3)$.

\begin{theorem}\label{Couette}
Given $0<\epsilon\ll 1$,  there exists a weak solution $u^{\epsilon}$ of the system \eqref{NS} such that
\[\|u^{\epsilon}(0,\cdot)-U(\cdot)\|_{L^2(\R^3)}\leq \epsilon,\]
meanwhile, for every  $3\epsilon^{1/2}\le t\le 5\epsilon^{1/2}$,
\[\|u^{\epsilon}(t,\cdot)-U(\cdot)\|_{L^2(\R^3)}\geq \epsilon^{-1/2}.\]
\end{theorem}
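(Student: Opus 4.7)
The plan is to reduce Theorem \ref{Couette} to a variant of the convex integration scheme developed for Theorem \ref{t:main}. Writing $u^\epsilon = U + v^\epsilon$ with $U=(x_2,0,0)$, the identities $\Delta U=0$ and $(U\cdot\nabla)U=0$ show that it suffices to construct a divergence-free, spatially compactly supported perturbation $v^\epsilon\in C([0,T];L^2(\R^3))$ solving the Couette-perturbed system
\[
\del_t v^\epsilon - \Delta v^\epsilon + (v^\epsilon\cdot\nabla)v^\epsilon + (v^\epsilon\cdot\nabla)U + (U\cdot\nabla)v^\epsilon + \nabla p^\epsilon = 0,
\]
with a prescribed $L^2$ profile $\|v^\epsilon(t)\|_{L^2(\R^3)}^2 = e^\epsilon(t)$. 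Choosing $e^\epsilon$ smooth with $e^\epsilon(0)\leq \epsilon^2$ and $e^\epsilon(t)\geq \epsilon^{-1}$ throughout $[3\epsilon^{1/2},5\epsilon^{1/2}]$ then directly yields the two bounds claimed in the theorem.

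The algebraic observation that makes the reduction work is that the two extra linear pieces can be absorbed into a Reynolds stress of compact support. Since $v^\epsilon$ and $U$ are both divergence-free, a direct computation gives
\[
(v^\epsilon\cdot\nabla)U + (U\cdot\nabla)v^\epsilon \;=\; \D\bigl(v^\epsilon\otimes U + U\otimes v^\epsilon\bigr) \;=\; \D\bigl(x_2(v^\epsilon\otimes e_1 + e_1\otimes v^\epsilon)\bigr),
\]
and if $v^\epsilon$ is supported in a fixed ball of radius $R$ then the coefficient $x_2$ is uniformly bounded on that support. Consequently, at each stage $q$ of the iteration one seeks $(v_q,\RR_q)$ solving
\[
\del_t v_q - \Delta v_q + \D(v_q\otimes v_q) + \nabla p_q = \D \RR_q,
\]
where the Reynolds error $\RR_q$ now absorbs the additional Couette contribution $x_2(v_q\otimes e_1 + e_1\otimes v_q)$, with its trace reabsorbed into $p_q$. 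Because $v_q$ is compactly supported, so is this extra contribution.

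The iteration itself then proceeds as in the proof of Theorem \ref{t:main}: decompose $v_q = v_q^{\text{loc}} + v_q^{\text{non-loc}}$, build intermittent perturbations $w_{q+1}$ from compactly supported box flows for the local part, and use the localized corrector to absorb the non-compactly supported and non-divergence-form residuals arising in the non-local part. The only new input is the tracking of the Couette term: it is linear in $v_q$ (up to the bounded factor $x_2$), so its natural $L^2$-type norms are of size comparable to $\|v_q\|_{L^2}$ and fit within the existing inductive bounds driving $\RR_q\to 0$. Passing to the limit as $q\to\infty$ produces a weak solution $v^\epsilon$ with $\|v^\epsilon(t)\|_{L^2}^2 = e^\epsilon(t)$, and $u^\epsilon = U + v^\epsilon$ then satisfies both quantitative estimates in the theorem.

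The principal obstacle is the interplay between the unbounded coefficient $x_2$ in the Couette shear and the requirement that every $v_q$ (hence every $\RR_q$) remain compactly supported inside a fixed ball. The linear transport $x_2\del_1$ does not preserve spatial supports in the same way that incompressible advection by a bounded field would, so the mollification scales $\ell_q$ and the localized corrector of Theorem \ref{t:main} must be tuned so that at every stage the new iterates stay confined to a slightly enlarged but uniform ball. Verifying this stability of support while simultaneously controlling the new linear errors $x_2(w_{q+1}\otimes e_1 + e_1\otimes w_{q+1})$ at the correct scale, so that the inductive estimates still close on the short time window $[0,5\epsilon^{1/2}]$ with the rapidly growing profile $e^\epsilon$, is the most delicate part of the argument.
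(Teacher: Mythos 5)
There is a genuine gap at the heart of your reduction. You propose to absorb the Couette coupling $\D\bigl(x_2(v_q\otimes e_1+e_1\otimes v_q)\bigr)$ into the Reynolds stress $\RR_q$ and assert that, being linear in $v_q$, it ``fits within the existing inductive bounds driving $\RR_q\to 0$.'' This cannot work: the inductive scheme requires $\|\RR_q\|_{L^\infty_t L^1}\le \delta_{q+1}\lambda_q^{-4\alpha}\to 0$, whereas your extra contribution has size comparable to $\|v_q\|_{L^1}$ (times the support radius), and $v_q$ does \emph{not} tend to zero --- by design its $L^2$ norm is large (of order $\epsilon^{-1/4}$ or more) on $[3\epsilon^{1/2},5\epsilon^{1/2}]$, which is the whole content of the theorem. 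A non-vanishing piece inside $\RR_q$ would be fed into the geometric lemma and the amplitudes $a_{(k,q)}$, so the perturbations $w_{q+1}$ would never become small and the iteration would not converge; alternatively, if you exempt that piece from the cancellation mechanism, you are simply keeping the linear term in the equation, which is what the paper actually does. The paper's relaxed system \eqref{C-vq} retains $x_2\partial_1 v_q+(v_q^2,0,0)$ explicitly on the left-hand side at every stage, and only the genuinely small errors enter $\RR_q$.

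The price of keeping the linear terms is paid where your proposal is silent: the localized corrector $\wtq$ must now solve a forced Navier--Stokes-type system containing the shear transport $x_2\partial_1\wtq$ (see \eqref{e:wtq-C}), and its well-posedness and smallness in $\widetilde L^\infty_t B^{1/2}_{2,1}$ are established via the change of variables $\bar x_1=x_1-tx_2$ (Proposition \ref{posed-COU}); this also disposes of your worry about support propagation, since the local part is compactly supported by construction of the building blocks while the non-local corrector is never required to be compactly supported. Finally, you should make explicit the temporal localization that reconciles $\|v^\epsilon(0)\|_{L^2}\le\epsilon$ with the large norm later: the paper takes $e(t)=t/\epsilon$ together with temporal cutoffs confining $\spt_t(\vql,\RR_q)$ to $[\epsilon^{1/2}+\lambda_q^{-\alpha},6\epsilon^{1/2}-\lambda_q^{-\alpha}]$ and enforcing the energy matching only on $[3\epsilon^{1/2}-\lambda_q^{-\alpha},5\epsilon^{1/2}+\lambda_q^{-\alpha}]$; your choice of $e^\epsilon$ gestures at this but the cutoff mechanism is an essential ingredient, not a detail.
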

Hydrodynamic stability at high Reynolds number has been a very active ﬁeld in the ﬂuid mechanics. There is a series of work on the stability threshold problem, e.g. \cite{BGM17, BGM20, BGM22, CWZ, DM23, WZ} for the 3D Couette ﬂow. As a simple application of the convex integration method, we present an instability result for the \eqref{NS}, which corresponds to the ill-posedness of the perturbation  system near the 3D Couette ﬂow in $L^2(\R^3)$.

\subsection{Organization of the paper} In Section \ref{Induction}, we give our induction scheme and the two iterative proposition, Proposition
\ref{iteration} and  Proposition \ref{p:main-prop2} below, which immediately show Theorem~\ref{t:main}. Section \ref{proof-1} and  Section \ref{proof-2} present the detailed proofs of  Proposition
\ref{iteration} and  Proposition \ref{p:main-prop2}, respectively.
In Section \ref{proof of application}, we utilize the iterative scheme in this paper to prove Theorems \ref{dns} and Theorems \ref{Couette}. Appendix contains some technical tools such as the geometric Lemma, an improved Hölder inequality, the definitions of mollifiers and Lerner-Chemin space $\widetilde L^{\infty}_tB^{1/2}_{2,1}$, and the inverse divergence iteration step.

\noindent {\bf{Notations}}\, For a $\TTT^3$-periodic function $f$, we denote
\begin{align*}
\mathbb{P}_{=0} f:=\frac{1}{|\TTT^3|}\int_{\TTT^3} f(x)\dd x,\quad\PP f=f-\mathbb{P}_{=0} f\quad \text{and}\quad u\ootimes v:=u\otimes v-\frac{1}{3}\tr(u\otimes v).
\end{align*}
In the following, the notation $x\lesssim y$ means $x\le Cy$ for a universal constant that may change from line to line. We use the symbol $\lesssim_N$ to express that the constant in the inequality depends on the parameter $N$.  {Without ambiguity, we will denote $L^m([0,T];Y(\R^3))$ and $L^m([0,T];L^m(\R^3))$ by $L^m_t Y$ and $L^m_{t,x}$, respectively.} We also denote the Lebesgue space, Sobolev space, Hilbert space and Besov space in $\R^3$ by $L^p,~W^{s,p},~H^s$ and $B^{s}_{p,r}$, respectively.

\section{Induction scheme}\label{Induction}
In this section, we give the induction scheme and the  two  iterative  propositions, that is Proposition \ref{iteration} and Proposition \ref{p:main-prop2} below, which enable us to  prove Theorem \ref{t:main}.
\subsection{Parameters}\label{para}First of all, we introduce several parameters.
Let
\begin{align}\label{b-beta}
b=2^{15}, \quad\beta=b^{-4},  \quad 0<\alpha\leq b^{-7}.
\end{align}
Suppose that $a\in \NN$ is a large number depending on $b,\beta,\alpha$ and the initial data $\uin$. We define
\begin{align}\label{def-lq}
    \lambda_q :=   a^{b^q},  \quad \delta_q :=  \lambda_q^{-2\beta}, \quad \ell_q:=\lambda^{-60}_q,\quad q\ge 0,
\end{align}
and
\begin{align}\label{omega}
\Omega_{q}:=\big[-\tfrac{1}{2} +\lambda_q^{-\alpha},~
\tfrac{1}{2} -\lambda_q^{-\alpha}  \big]^3   ,~~~~q\geq1.
\end{align}
For given $e(t)$ and $\widetilde{e}(t)$ in Theorem \ref{t:main}, there exists an appropriate small constant $c$ such that $c\delta_2<e(t),\widetilde{e}(t)<\frac{3}{c}\delta_2$ for $t\in [0, T]$. Without loss of generality, we set $c=1$.

\subsection{Iterative procedure }\label{sec-ite}As is usual in convex integration schemes, we consider the following relaxation of the Navier-Stokes equations
 \begin{equation}\label{NSR}\tag{NSR}
\left\{ \begin{alignedat}{-1}
   & \del_t u_q-\Delta u_q+\Div (u_q\otimes u_q) +\nabla p_q   =\Div\RR_q,
 \\
  &\nabla \cdot u_q = 0, \\
\end{alignedat}\right.
\end{equation}
where the \emph{Reynolds stress} $\RR_q$  is a symmetric trace-free $3\times3$  matrix.

To employ induction, we suppose that the solution $(u_q, p_q, \RR_q)$ of the equations \eqref{NSR} on $(0, T]\times \R^3$ satisfies the following conditions:
\begin{align}
&\uq=\uql+\uqnl,
    \label{uq-tigh}\\
&\|\uql\|_{L^{\infty}_tL^2}\leq\sum_{j=0}^{q-1}C_0\delta^{1/2}_{j+1},\qquad\|\uqnl\|_{\widetilde L^{\infty}_tB^{1/2}_{2,1} } \le \sum_{j=0}^{q-1}\lambda^{-1}_{j}, \label{e:vq-C0}\\
&\|(\uql,~\uqnl)\|_{L^{\infty}_{t}H^5} \leq \lambda^{10}_q, \label{e:vq-H5}
    \\
    &\| \RR_q \|_{L^{\infty}_{t}L^1 }  \le \delta_{q+1}\lambda_q^{-4 \alpha}, \qquad\| \RR_q \|_{L^{\infty}_{t}W^{5,1} }  \le  \lambda_q^{10},
    \label{e:RR_q-C0}
       \\
     &{\spt_{x}} \uql, {\spt_{x}} \RR_q\subseteq \Omega_q   ,
    \label{e:RR_q-tigh}\\
    &\delta_{q+1}\leq e(t)-\int_{\mathbb{R}^3}|u_q|^2\dd x \leq 3\delta_{q+1}, \quad 0\le t\le T.\label{et}
\end{align}
Here $C_0$ is a universal large enough number. The following proposition shows that there exists a solution  $(u_{q+1}, p_{q+1}, \RR_{q+1})$ of the equations~\eqref{NSR}
satisfying the above inductive conditions \eqref{uq-tigh}--\eqref{et} with $q$ replaced by $q+1$, which guarantees the iteration proceeds successfully.

{\begin{proposition}\label{iteration}
Assume that $(u_q,p_q,\RR_q )$ solves
\eqref{NSR} and satisfies  \eqref{uq-tigh}--\eqref{et},
then there exists a solution $ (u_{q+1},  p_{q+1}, \RR_{q+1} )$, satisfying \eqref{uq-tigh}--\eqref{et} with $q$ replaced by $q+1$, and such that
\begin{align}
        \|u_{q+1} - u_q\|_{L^\infty_tL^2} &\leq  C_0\delta_{q+1}^{1/2}.
        \label{e:velocity-diff}
\end{align}
\end{proposition}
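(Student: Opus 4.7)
The plan is to follow the standard convex integration template — mollify, perturb, and recompute the Reynolds stress — but to build the local / non-local splitting into every step so that compact support of $\RR_{q+1}$ is preserved. First I would mollify $(u_q,\RR_q)$ at scale $\ell_q$ in space and time to obtain $(\widetilde u_{\ell_q},\widetilde \RR_{\ell_q})$ with good higher-order derivative bounds, and correspondingly split $\widetilde u_{\ell_q} = \tulql + \tulqnl$. The mollification produces a standard commutator error which I would absorb into $\RR_{q+1}$ at the end, exactly as in \cite{BV}.

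Next I construct the compactly supported perturbation $\wqloc = \wpq + \wcq + \wttq + \wttqc$. Using a smooth spatial cut-off supported in $\Omega_{q+1}$ together with the geometric lemma from the appendix, I decompose $\delta_{q+1}\Id - \widetilde \RR_{\ell_q}$ as a finite sum of rank-one tensors $a_k^2(t,x)\,\xi_k\otimes\xi_k$. Then $\wpq$ is the sum of these amplitudes multiplying intermittent box flows at frequency $\lambda_{q+1}$, following \cite{MY}; $\wcq$ corrects $\wpq$ to be divergence-free; $\wttq$ is a temporal corrector designed to cancel the traveling-wave part of the high–low interaction in $\partial_t(\wpq\otimes\wpq)$, modified from \cite{BV,MY} by inserting a spatial cut-off to preserve compact support at the cost of losing divergence-freeness; and $\wttqc$ restores that condition. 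All four pieces are then supported in $\Omega_{q+1}$.

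The central novelty is the treatment of the non-local part. After setting $\uqql = \tulql + \wqloc$, I plug $u_{q+1} = \uqql + \uqqnl$ into \eqref{NSR} and collect the error terms. They split into two families: those that are compactly supported and already in divergence form (the usual Nash, oscillation, transport, and linear errors), which can be written as $\D R$ via a local inverse-divergence operator and absorbed into $\RR_{q+1}$; and those lacking compact support or divergence form (cross interactions of $\wqloc$ with the non-local variables, together with certain commutator contributions). The bad errors are treated as a forcing term, and I define the localized corrector $\wtq$ as the solution of the forced incompressible Navier–Stokes equation \eqref{e:wt}, so that $\uqqnl = \tulqnl + \wtq$. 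By the high-frequency concentration of the building blocks (Lemma~\ref{tracefree}), the forcing is small in the critical Besov regularity, and standard Chemin–Lerner well-posedness delivers $\|\wtq\|_{\widetilde L^\infty_t B^{1/2}_{2,1}}\lesssim \lambda_q^{-1}$, compatible with \eqref{e:vq-C0}.

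The remainder is verification. For $\wqloc$ the improved Hölder inequality from the appendix gives $\|\wpq\|_{L^2}\approx \delta_{q+1}^{1/2}$ and $\|\wpq\|_{H^5}\lesssim \lambda_{q+1}^{10}$, with the other correctors higher order in $\lambda_{q+1}$; this yields \eqref{e:velocity-diff} together with the $L^\infty_t L^2$ and $L^\infty_t H^5$ bounds. The energy identity \eqref{et} is enforced by tuning a free parameter in the amplitudes of $\wpq$ so that $\|u_{q+1}\|_{L^2}^2$ consumes exactly the prescribed $\delta_{q+2}$ of the energy deficit, exploiting the near-orthogonality between the high-frequency box flows and $u_q$. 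The Reynolds stress bounds \eqref{e:RR_q-C0} reduce to controlling oscillation, Nash, transport, and linear pieces by $\delta_{q+2}\lambda_{q+1}^{-4\alpha}$, which closes with the parameter choices $b=2^{15}$, $\beta=b^{-4}$, $\alpha\le b^{-7}$ provided $a$ is large. \emph{The main obstacle — and the point around which the entire scheme is designed — is verifying that $\RR_{q+1}$ has compact support in $\Omega_{q+1}$:} this is precisely what the localized corrector $\wtq$ is engineered to deliver, absorbing exactly those contributions which the inverse divergence would otherwise smear across all of $\R^3$. Checking that the final residual is manifestly the divergence of a compactly supported trace-free tensor, while $\wtq$ simultaneously meets the required Chemin–Lerner smallness, is the most delicate step.
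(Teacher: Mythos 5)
Your overall architecture matches the paper's: mollify, build the compactly supported perturbation $\wqloc=\wpq+\wcq+\wttq+\wttqc$ from amplitudes times box flows, and define the localized corrector $\wtq$ as the solution of the forced system \eqref{e:wt} so that the non-divergence-form and non-compactly-supported errors never enter $\RR_{q+1}$. However, two specific steps in your plan, as written, would break the induction, and both concern exactly the compact-support constraint \eqref{e:RR_q-tigh} that the scheme is built around.

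First, you propose to absorb the mollification commutator into $\RR_{q+1}$ ``exactly as in \cite{BV}.'' This fails here: since $u_{\ell_q}$ contains the non-local part $\ulqnl$, the commutator $\Rem$ defined in \eqref{def-Rem} is \emph{not} supported in $\Omega_{q+1}$, so placing it in $\RR_{q+1}$ destroys $\spt_x\RR_{q+1}\subseteq\Omega_{q+1}$. The paper instead routes $\Div\Rem$ into the forcing $F_{q+1}$ of the localized-corrector equation (see \eqref{def-F_{q+1}}), where only the smallness $\|\Rem\|_{L^\infty_t L^2}\lesssim \lambda_q^{20}\ell_q$ is needed. Second, you invoke a ``local inverse-divergence operator'' to put the compactly supported oscillation and transport errors into divergence form. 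There is no such operator with the required properties: the introduction points out that applying $\Div^{-1}$ to a compactly supported function generally does not return a compactly supported tensor, and this is precisely why the paper replaces it by the finite algebraic inverse-divergence iteration of Lemma~\ref{tracefree}. That lemma produces an \emph{explicit} stress $\RR_{(N)}$ built from derivatives of the (compactly supported) low-frequency factor times periodic profiles --- hence $\spt_x\RR_{(N)}=\spt_x G$ --- at the price of a leftover term $E_{(N)}$ of size $\lambda_{q+1}^{-N}$ and a gradient term, both of which must also be shipped to $F_{q+1}$ and the pressure respectively (Propositions~\ref{def-F2} and~\ref{F1}). Your sketch uses Lemma~\ref{tracefree} only to argue smallness of the forcing, whereas its essential role is to manufacture the compactly supported divergence-form part of $\RR_{q+1}$ in the first place. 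With these two corrections the rest of your outline (energy pumping through the amplitude factor $\rho_q$, Chemin--Lerner well-posedness for $\wtq$, and the final verification of \eqref{uq-tigh}--\eqref{e:velocity-diff}) tracks the paper's proof.
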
}
Moreover, we will show the following proposition.
\begin{proposition}\label{p:main-prop2}Let $T>0$ and $e(t)=\widetilde{e}(t)$ for $t\in[0, \tfrac{T}{4}+\lambda^{-1}_1  ]$. Suppose that  $(u_q ,p_q,\RR_q)$ solves
\eqref{NSR} and satisfies  \eqref{uq-tigh}--\eqref{et},    $(\widetilde{u}_q, \widetilde{p}_q, \widetilde{\RR}_q)$  solves
\eqref{NSR} and satisfies  \eqref{uq-tigh}--\eqref{et} with $e(t)$ replaced by $\widetilde{e}(t)$. Then   if
\begin{align*}
 \uql=\tuql, \quad\uqnl=\tuqnl, \quad \RR_{q } ={\tRR}_{q },\quad\text{on}\quad [0, \tfrac{T}{4}+\lambda^{-1}_q],
\end{align*}
then we have
\begin{align*}
 \uqql=\tuqql, \quad\uqqnl=\tuqqnl, \quad \RR_{q+1 } ={\tRR}_{q+1},\quad\text{on}\quad [0, \tfrac{T}{4}+\lambda^{-1}_{q+1}].
\end{align*}
\end{proposition}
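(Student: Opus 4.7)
The plan is to propagate equality from step $q$ to step $q+1$ by tracking the construction in Proposition~\ref{iteration} component by component and verifying that every piece of $(u_{q+1},\RR_{q+1})$ produces identical outputs to its tilde counterpart on the slightly shrunk interval $[0,\tfrac{T}{4}+\lambda_{q+1}^{-1}]$. Since $\lambda_{q+1}>\lambda_q$ we have a positive margin $\lambda_q^{-1}-\lambda_{q+1}^{-1}$; quantitatively, $\lambda_{q+1}^{-1}=\lambda_q^{-b}$ with $b=2^{15}$, so this margin is essentially of order $\lambda_q^{-1}$ and dwarfs every temporal regularization width ($\ell_q=\lambda_q^{-60}$, temporal cutoff scales, and Lagrangian flow displacements) that appears in the construction. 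Note also that the compatibility assumption $e(t)=\widetilde e(t)$ on $[0,\tfrac{T}{4}+\lambda_1^{-1}]$ already covers the target interval since $\lambda_{q+1}^{-1}\le \lambda_1^{-1}$, so the energy-profile normalization feeds identical data to both constructions.

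First I would establish equality of the regularized inputs. The mollified local and non-local velocities and the mollified Reynolds stress are produced by convolving $\uql$, $\uqnl$, and $\RR_q$ with space-time kernels of width $\lesssim\ell_q$. Since the raw quantities agree with their tildes on $[0,\tfrac{T}{4}+\lambda_q^{-1}]$ by hypothesis, their regularizations agree on $[0,\tfrac{T}{4}+\lambda_q^{-1}-\ell_q]$, which comfortably contains $[0,\tfrac{T}{4}+\lambda_{q+1}^{-1}]$.

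Next I would check the local perturbation. The principal building block $\wpq$, incompressibility corrector $\wcq$, temporal corrector $\wttq$, and its companion corrector $\wttqc$ are constructed deterministically from the mollified stress, the mollified local velocity (entering through the advective flow map), the fixed intermittent jets, and temporal cutoff functions adapted to $[0,T]$. Because the inputs and cutoffs used in both constructions coincide on the larger interval, each piece of $\wqloc$ coincides with its tilde counterpart on $[0,\tfrac{T}{4}+\lambda_{q+1}^{-1}]$; the only nontrivial check is the flow map, whose governing ODE shares right-hand side and initial condition (at $t=0$) with its tilde counterpart throughout the larger interval, so the two flows agree there by ODE uniqueness.

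For the non-local corrector $\wtq$, by construction it solves an inhomogeneous incompressible Navier-Stokes type system with zero initial data and forcing built from $\wqloc$ together with $u_{\ell_q}^{\text{non-loc}}$. Since the forcing equals its tilde version on $[0,\tfrac{T}{4}+\lambda_{q+1}^{-1}]$ and both $\wtq$ and $\twtq$ depart from zero, uniqueness for this linear parabolic problem yields $\wtq=\twtq$ on the same interval. Combining, $\uqql=\tuqql$ and $\uqqnl=\tuqqnl$ on $[0,\tfrac{T}{4}+\lambda_{q+1}^{-1}]$, and since $\RR_{q+1}$ is defined by the very same inverse-divergence and algebraic operations applied to the now-equal data $(u_{q+1},p_{q+1},\RR_q,\wq)$, the identity $\RR_{q+1}=\widetilde{\RR}_{q+1}$ on the same interval follows automatically. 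The main obstacle is the careful bookkeeping that ensures every temporal width introduced by mollification, cutoff, and the flow map fits inside the budget $\lambda_q^{-1}-\lambda_{q+1}^{-1}$; once this is verified, the proposition reduces to a chain of deterministic/uniqueness statements.
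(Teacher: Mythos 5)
Your proposal follows essentially the same route as the paper: propagate equality through mollification (losing a temporal margin much smaller than $\lambda_q^{-1}-\lambda_{q+1}^{-1}$), then through $\chi_q$, $\rho_q$, the amplitudes $a_{(k,q)}$ and the four local correctors, then through the forcing $F_{q+1}$ and uniqueness for \eqref{e:wt}, and finally into $\RR_{q+1}$. Two small corrections: this construction has no Lagrangian flow map to check (the amplitudes are defined pointwise from $\RR_{\ell_q},\chi_q,\rho_q$), and the system \eqref{e:wt} for $\wtq$ is nonlinear, so the needed uniqueness comes from the Banach fixed-point/local well-posedness argument of Proposition~\ref{wtq-H3} rather than from linear parabolic uniqueness.
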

\subsection{Proof of Theorem \ref{t:main}}\label{P-to-T}For given $e(t)$ and $\widetilde{e}(t)$, let $(u_1, p_1, \RR_1)=(0,0,0)$. One easily verifies that $(u_1, p_1, \RR_1)$ satisfies \eqref{NSR} and \eqref{uq-tigh}--\eqref{et}, as well as \eqref{et} with $e(t)$ replaced by  $\widetilde{e}(t)$. Utilizing Proposition \ref{iteration} inductively, we obtain a sequence of solutions $\{(u_q, p_q,\RR_q)\}$ and $\{(\widetilde{u}_q, \widetilde{p}_q, \widetilde{\RR}_q)\}$ of the equations \eqref{NSR} respectively satisfying  estimates \eqref{uq-tigh}--\eqref{e:velocity-diff}, and \eqref{uq-tigh}--\eqref{e:velocity-diff} with $e(t)$ replaced by  $\widetilde{e}(t)$. By the definition of $\delta_q$, one can easily deduce that $\sum_{i=2}^{\infty}\delta^{1/2}_{i}$ converges to a finite number. This fact combined with \eqref{e:velocity-diff} implies that $\{u_q\}$ and $\{\widetilde{u}_q\}$ are two  {Cauchy sequences in $C_tL^2( \R^3) $}. Since  $\|(\RR_q, {\tRR}_q)\|_{L^\infty_t L^1}\rightarrow 0$ as $q\to\infty$,  the limit functions $ u$ and  $ \widetilde{u}$  are weak solutions of the Navier-Stokes equation  \eqref{NS} and satisfies
    \begin{align}
    &\int_{\R^3}| u(t,\cdot)|^2\dd x=e(t),~~~~t\in[0,T], \label{u-e}\\
    &\int_{\R^3}|\widetilde{u}(t,\cdot)|^2\dd x=\widetilde{e}(t),~~~~t\in[0,T].\label{tu-te}
    \end{align}
Since $e(t)=\widetilde{e}(t)$  for $t\in[0, \tfrac{T}{2}]$,   Proposition \ref{p:main-prop2} implies that $u(0,x)=\widetilde{u}(0,x)$. This relation combined with  \eqref{u-e} and  \eqref{tu-te} shows Theorem \ref{t:main}.

\section{Proof of Proposition \ref{iteration}}\label{proof-1} In this section, we are devoted to the proof of Proposition \ref{iteration}. More specifically, we construct  $(u_{q+1}, p_{q+1}, \RR_{q+1})$ in Proposition \ref{iteration} by the following two steps:
\begin{enumerate}
  \item [$\bullet$]Step 1: Mollification: $(\uq, p_q, \RR_q)\mapsto (u_{\ell_q},  p_{\ell_q}, \RR_{\ell_q})$. We define $(u_{\ell_q},  p_{\ell_q}, \RR_{\ell_q})$ by mollifying  $(\uq, p_q, \RR_q)$ so that  we  {obtain higher regularity estimates for the perturbation.}
 \item [$\bullet$]Step 2: Perturbation: $ (u_{\ell_q},  p_{\ell_q}, \RR_{\ell_q})\mapsto (u_{q+1}, p_{q+1}, \RR_{q+1})$. {By making use of the box flows introduced in \cite{MY}, the temporal corrector inspired by \cite{BV}  and the localized corrector , we construct the perturbation $w_{q+1}$. Then we define $u_{q+1}$ by adding $w_{q+1}$ on~$u_{\ell_q}$.}
\end{enumerate}
\subsection{Mollification}
We define the functions $(u_{\ell_q},  p_{\ell_q}, \RR_{\ell_q})$ by the spatial  mollifier $\psi_{\ell_q}$ and the time mollifier $\varphi_{\ell_q}$ in Definition \ref{e:defn-mollifier-t} as follows: For $(x,t)\in \R^3\times [0, T-\ell_q]$,
\begin{align}
   & u_{\ell_q} (x,t):=\int_t^{t+\ell_q}(\uq * \psi_{\ell_q})(x,s)\varphi_{\ell_q}(t-s)\dd s, \label{def-ulq}\\
   &\ulql(x,t):=\int_t^{t+\ell_q}(\uql * \psi_{\ell_q})(x,s)\varphi_{\ell_q}(t-s)\dd s, \label{def-ulql}\\
      &\ulqnl(x,t):=\int_t^{t+\ell_q}(\uqnl * \psi_{\ell_q})(x,s)\varphi_{\ell_q}(t-s)\dd s, \label{def-ulqnl}\\
   &p_{\ell_q}(x,t):=\int_t^{t+\ell_q}(p_q * \psi_{\ell_q})(x,s)\varphi_{\ell_q}(t-s)\dd s,\label{def-plq}\\
 &   \RR_{\ell_q}(x,t) := \int_t^{t+\ell_q}(\RR_q * \psi_{\ell_q})(x,s)\varphi_{\ell_q}(t-s)\dd s,  \label{def-Rlq}\\
 &\Rem :=  u_{\ell_q} \otimes u_{\ell_q} - \int_t^{t+\ell_q}((u_q \otimes u_q) * \psi_{\ell_q})(x,s)\varphi_{\ell_q}(t-s)\dd s.\label{def-Rem}
\end{align}
 One easily verifies that  $(u_{\ell_q},  p_{\ell_q}, \RR_{\ell_q}, \Rem)$ solves the Cauchy problem
\begin{equation}
\left\{ \begin{alignedat}{-1}
&\del_t u_{\ell_q}-\Delta u_{\ell_q}+\Div (u_{\ell_q}\otimes u_{\ell_q})  +\nabla p_{\ell_q}   =  \Div \RR_{\ell_q} +\Div \Rem,
\\
 & \nabla \cdot u_{\ell_q} = 0,
  \\
  &  u_{\ell_q} |_{t=0}=   \int_0^{\ell_q}(u_q * \psi_{\ell_q})(x,s)\varphi_{\ell_q}(-s)\dd s
\end{alignedat}\right.
 \label{e:mollified-euler}
\end{equation}
{such that
\begin{align}
&\spt_x \ulql, \,\spt_x \RR_{\ell_q} \subseteq \Omega_{q}+[-\lambda^{-1}_q, \lambda^{-1}_q]^3.\label{supp-vlq}
\end{align}
Moreover,  $(u_{\ell_q}, \RR_{\ell_q}, \Rem)$ satisfies the following estimates.
\begin{proposition}[Estimates  for $(u_{\ell_q}, \RR_{\ell_q}, \Rem)$]\label{p:estimates-for-mollified}For any integers $L,N\ge 0$, we have
\begin{align}
&\| \ulql-\uql\|_{L^\infty_tH^4} \lesssim \lambda^{10}_q \ell_q,\quad \| \ulqnl-\uqnl\|_{L^\infty_tH^4} \lesssim \lambda^{10}_q \ell_q,\label{e:v_ell-vq}
\\
&\|\partial^L_t u_{\ell_q} \|_{L^\infty_tH^{N+5}} \lesssim  \lambda^{10}_{q} \ell_q^{-N-L},  \label{e:v_ell-CN+1}
\\
&{\|\RR_{\ell_q}\|_{L^\infty_t L^1} \le  \delta_{q+1} \lambda^{-4\alpha}_q}, && \label{e:R_ell}
\\
&\|\partial^L_t\RR_{\ell_q}\|_{L^\infty_tW^{N+5,1}} \lesssim   \lambda^{10}_{q} \ell_q^{-N-L},\label{e:R_ell-W} \\
&\|\Rem\|_{L^\infty_tW^{N+4,1}} \lesssim  \lambda^{20}_{q} \ell_q^{-N+1},\label{e:R_rem}\\
 & \frac{\delta_{q+1}}{2}\leq e(t) - \int_{\mathbb R^3}  |u_{\ell_q}|^2 \dd x\leq 4\delta_{q+1}. \label{e:ell}
\end{align}
\end{proposition}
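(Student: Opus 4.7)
The plan is to recognize the proposition as a collection of standard spatial-temporal mollification estimates, together with a Constantin--E--Titi type commutator bound for $\Rem$ and a final check that mollifying changes the kinetic energy only negligibly. My starting point is that the inductive hypotheses \eqref{e:vq-H5} and \eqref{e:RR_q-C0} provide $H^5$ respectively $W^{5,1}$ control of the unmollified quantities by $\lambda_q^{10}$, while $\ell_q=\lambda_q^{-60}$ is extremely small compared to $\lambda_q^{-10}$ and to $\delta_{q+1}=\lambda_{q+1}^{-2\beta}$ for $a$ large.

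For the difference bound \eqref{e:v_ell-vq}, I would invoke the elementary mollifier estimate $\|f*\psi_\ell-f\|_{H^4}\lesssim \ell\|f\|_{H^5}$ and its time analogue, applied to $\uql$ and $\uqnl$. The higher-regularity bounds \eqref{e:v_ell-CN+1} and \eqref{e:R_ell-W} follow from $\|\partial_t^L(f*\psi_\ell\varphi_\ell)\|_{H^{k+N}}\lesssim \ell^{-(N+L)}\|f\|_{H^k}$ taken with $k=5$, which costs $\ell_q^{-1}$ per derivative beyond the base regularity. The $L^1$ bound \eqref{e:R_ell} is immediate from \eqref{e:RR_q-C0} since convolution with a nonnegative normalized kernel is an $L^1$-contraction.

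The main computation is the commutator estimate \eqref{e:R_rem}. I would use the classical identity
\begin{align*}
\Rem(x,t) = \iint \psi_{\ell_q}(y)\varphi_{\ell_q}(t-s)\bigl(u_q(x-y,s)-u_q(x,t)\bigr)^{\otimes 2}\dd y\dd s - (u_q-u_{\ell_q})^{\otimes 2}.
\end{align*}
A single Taylor expansion yields the base-level gain $\|\Rem\|_{L^\infty_t L^1}\lesssim \ell_q^2\|\nabla u_q\|_{L^\infty_t L^2}^2\lesssim \ell_q^2\lambda_q^{20}$. For $W^{N+4,1}$, I would Leibniz-distribute the $N+4$ derivatives across the two factors of $u_q$: up to $4$ derivatives per factor are absorbed by the $H^5$-regularity, and any remaining derivatives falling on the mollifier cost one factor of $\ell_q^{-1}$ each. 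Since $\ell_q<1$, weakening the gain $\ell_q^2$ to $\ell_q$ at the base gives the uniform bound $\lambda_q^{20}\ell_q^{-N+1}$ for all $N\ge 0$.

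Finally, \eqref{e:ell} follows from \eqref{et} together with
\[
\bigl|\|u_{\ell_q}\|_{L^2}^2-\|u_q\|_{L^2}^2\bigr|\le \|u_{\ell_q}-u_q\|_{L^2}\bigl(\|u_{\ell_q}\|_{L^2}+\|u_q\|_{L^2}\bigr)\lesssim \lambda_q^{10}\ell_q=\lambda_q^{-50},
\]
using the $L^2$-restriction of \eqref{e:v_ell-vq} and the $L^\infty_tL^2$ bound of \eqref{e:vq-C0} (with the Besov embedding $B^{1/2}_{2,1}\hookrightarrow L^2$ for $\uqnl$). For $a$ large, $\lambda_q^{-50}\ll \delta_{q+1}/2$, so the interval $[\delta_{q+1},3\delta_{q+1}]$ from \eqref{et} expands to at most $[\delta_{q+1}/2,4\delta_{q+1}]$. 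Overall the proof is essentially routine; the only technical care is the bookkeeping in the commutator estimate for $\Rem$, ensuring that the quadratic gain in $\ell_q$ survives the distribution of derivatives.
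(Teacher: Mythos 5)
The paper states Proposition \ref{p:estimates-for-mollified} without proof, so there is no in-text argument to compare against; your outline is the standard one that such papers implicitly rely on, and items \eqref{e:v_ell-CN+1}, \eqref{e:R_ell}, \eqref{e:R_ell-W} and \eqref{e:ell} are handled correctly (in particular your closing arithmetic $\lambda_q^{10}\ell_q=\lambda_q^{-50}\ll\delta_{q+1}$ is right). However, two steps of your plan have concrete gaps.

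First, every place where you claim a gain of $\ell_q$ from the \emph{temporal} part of the mollifier --- the "time analogue" in \eqref{e:v_ell-vq} and the Taylor expansion in $s-t$ inside the Constantin--E--Titi identity for \eqref{e:R_rem} --- requires a bound on a time derivative: $\partial_t\uql$ and $\partial_t\uqnl$ separately for \eqref{e:v_ell-vq}, and $\partial_t u_q$ for $\Rem$. None of these appear among the inductive hypotheses \eqref{uq-tigh}--\eqref{et}, which control only spatial regularity, and you never say where they come from. For the full field $u_q$ one can recover $\partial_t u_q$ from the equation \eqref{NSR} (using the compact support of $\RR_q$ to pass from $W^{5,1}$ to $H^3$), at the price of a worse power $\lambda_q^{20}$, which is harmless since $\ell_q=\lambda_q^{-60}$; but $\uql$ and $\uqnl$ individually satisfy no autonomous equation, so their time derivatives must be traced back to the explicit construction of the previous step (the formulas for $\wqloc$ and the system \eqref{e:wt} for $\wtq$). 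This needs to be said, or the induction needs to carry a $\partial_t$ bound.

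Second, your Leibniz bookkeeping for \eqref{e:R_rem} breaks down for $N\ge2$: in the commutator representation the term $(u_q-u_{\ell_q})^{\otimes2}$ (and the unshifted copy $u_q(x,t)$ inside the double integral) carries no mollifier onto which excess derivatives can be dumped, so $D^{N+4}$ with $N+4\ge 6$ forces more than five derivatives onto a bare $u_q$, which $H^5$ cannot absorb. The repair is standard and consistent with the claimed bound: use the commutator identity only for $N=0$ (the only case where a positive power $\ell_q^{+1}$ is claimed), and for $N\ge1$ estimate the two terms $u_{\ell_q}\otimes u_{\ell_q}$ and $(u_q\otimes u_q)*\psi_{\ell_q}\varphi_{\ell_q}$ of \eqref{def-Rem} separately, putting all derivatives beyond the fifth onto the mollification kernels at cost $\ell_q^{-1}$ each; this yields $\lambda_q^{20}\ell_q^{-N+1}$ directly. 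With these two repairs the proposal is sound.
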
}

{\subsection{Perturbation}
 The principle perturbation is mainly composed of two parts: amplitudes and the intermittent building blocks.\\
 \noindent\emph{Amplitudes}\quad We define amplitudes firstly. Let $\chi:[0,\infty)\to [1,\infty)$ be a smooth function satisfying
\begin{equation}\label{def-chi}
\chi(z)=\left\{ \begin{alignedat}{-1}
&1, \quad 0\le z\le 1,\\
&z, \quad z\ge 2,
\end{alignedat}\right.
\end{equation}
with $z\le 2\chi(z)\le 4z$ for $z\in (1,2)$. We define that
\[\chi_q:=\chi\Big(\Big\langle\tfrac{\RR_{\ell_q}}{\delta_{q+1}\lambda^{-2\alpha}_q}\Big\rangle\Big)
,\quad \text{where}\,\,\big\langle\cdot \big\rangle:=\sqrt{1+|\cdot|^2}. \]
Next, the space cutoffs $\{\eta_q(x)\}_{q\ge 1}\in C^{\infty}_c(\R^3)$ are defined by
\begin{equation}\label{eta}
\spt \,\,\eta_{q}=\Omega_{q+1} , \quad\quad   \eta_q|_{\Omega_{q}+ [ -\lambda^{-1}_q ,\lambda^{-1}_q ]^3}\equiv 1,\quad~~\quad|D^N\eta_q|\lesssim \lambda^{N}_q\,\,\text{for}\,\,N\ge 0,
\end{equation}
where $\Omega_{q+1}$ is defined in \eqref{omega}.  Thanks to \eqref{supp-vlq}, one can easily deduce that
\begin{align}\label{R=Rq}
\eta_{q}\RR_{\ell_{q}}=\RR_{ \ell_{q}}.
\end{align}
We claim that
\begin{align}\label{eta_qchi_q}
  \frac{1}{2}\le \int_{\R^3}\eta^2_q\chi_q\dd x
   \le 5.
\end{align}
Indeed, we deduce from the definitions of $\eta_q$ and $\chi_q$ that
\begin{align*}
  \int_{\R^3}\eta^2_q\chi_q\dd x=\int_{\Omega_{q+1}}\eta_q^2(x)\chi\Big(\Big\langle\tfrac{\RR_{\ell_q}}{\delta_{q+1}\lambda^{-2\alpha}_q}\Big\rangle\Big)\dd x.
\end{align*}
On one hand, from \eqref{e:R_ell}, one has
\begin{align}
   \int_{\Omega_{q+1}}\eta_q^2(x)\chi\Big(\Big\langle\tfrac{\RR_{\ell_q}}{\delta_{q+1}\lambda^{-2\alpha}_q}\Big\rangle\Big)\dd x
   \le&\int_{\Omega_{q+1}}4\big(1+\big|\tfrac{\RR_{\ell_q}}{\delta_{q+1}\lambda^{-2\alpha}_q}\big|\big)\dd x\le 5.
\end{align}
On the other hand,
\begin{align*}
   &\int_{\Omega_{q+1}}\eta_q^2(x)\chi\Big(\Big\langle\tfrac{\RR_{\ell_q}}{\delta_{q+1}\lambda^{-4\alpha}_q}\Big\rangle\Big)\dd x
   \ge\int_{\Omega_{q}}1\dd x\ge\frac{1}{2}.
\end{align*}
We define
 \begin{align}\label{energy2}
\rho_q(t) =   \frac{1}{3\int_{\mathbb{R}^3}\eta^2_q\chi_{q}\dd x}\Big(e(t) - \int_{\mathbb R^3}  |u_{\ell_q}|^2 \dd x - 2{\delta_{q+2}} \Big).
\end{align}
Then \eqref{e:ell} and \eqref{eta_qchi_q} together show that
\begin{align}\label{est-rhoq}
   \frac{1}{60}\delta_{q+1}\le\rho_q(t)\le 3\delta_{q+1}
\end{align}
Moreover, by \eqref{e:v_ell-CN+1}, \eqref{e:R_ell-W} and the definition of $\rho_q(t)$ in \eqref{energy2}, we see
 \begin{align}
\|\partial_t\rho_q(t)\|_{L^\infty_{t,x}}\lesssim &\delta_{q+1}\|\partial_t\chi_q\|_{L^\infty_{t,x}}+\|u_{\ell_q}\|_{L^\infty_t L^2}\|\partial_t u_{\ell_q}\|_{L^\infty_t L^2}\nonumber\\
\lesssim &\lambda^{20}_{q}\ell^{-1}_q.\label{pt-rho-q}
\end{align}

We are in position to give the  amplitudes $a_{(k,q)}(t,x)$ by
\begin{equation}\label{def-akq}
    a_{(k,q)}(t,x)=\eta_{q}
 a_k\Big({\rm Id}-\frac{\RR_{\ell_{q}}}{\chi_{q}\rho_q}\Big)
 (\chi_q \rho_q)^{1/2},
\end{equation}
 where $a_k$ stems from Lemma \ref{first S}. 

\subsubsection{Estimates for  amplitudes $a_{(k,q)}$} In order to estimate  $a_{(k,q)}$, we give some estimates for $\chi_q\rho_q$ firstly. Note that  the smooth function $\chi \ge1$ in \eqref{def-chi}, one can expect to obtain higher-order derivative estimates for $(\chi_q\rho_q)^{1/2}$ and  $(\chi_q\rho_q)^{-1}$, which are shown by Proposition \ref{cr1/2} as follows.
{\begin{proposition}[Estimates for $(\chi_q\rho_q)^{1/2}$ and $(\chi_q\rho_q)^{-1}$]\label{cr1/2}For any $N\ge 2$, we have
\begin{align}
&\|(\chi_q\rho_q)^{1/2}\|_{L^\infty_tH^N}\le    C_N\delta^{-N+1/2}_{q+1}\lambda^{10N+2N\alpha}_q\ell^{-N}_q, \label{chi-H4}\\
& \|(\chi_q\rho_q)^{-1}\|_{L^\infty_tH^N}
\le C_N\delta^{-N-1}_{q+1}\lambda^{10N+2N\alpha}_q\ell^{-N}_q,   \label{est-cr-1}\\
&\|\partial_t(\chi_q\rho_q)^{1/2}\|_{L^\infty_tH^N}\le  C_N\delta^{-N-1/2}_{q+1}\lambda^{10N+10+2(N+1)\alpha}_q\ell^{-2N}_q,\label{t-chi-H4}\\
&\|\partial_t(\chi_q\rho_q)^{-1}\|_{L^\infty_tH^N}
\le C_N\delta^{-N-2}_{q+1}\lambda^{10N+10+2(N+1)\alpha}_q\ell^{-2N}_q. \label{est-t-cr-1}
\end{align}
\end{proposition}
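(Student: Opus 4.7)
The entire estimate hinges on the pointwise lower bound $\chi_q\rho_q \ge \rho_q \gtrsim \delta_{q+1}$, which follows from $\chi\ge 1$ and \eqref{est-rhoq}. This keeps both $y\mapsto y^{1/2}$ and $y\mapsto y^{-1}$ smooth on the range of $\chi_q\rho_q$, with $k$-th derivatives of size at most $\delta_{q+1}^{1/2-k}$ and $\delta_{q+1}^{-k-1}$ respectively, and is precisely what produces the leading $\delta_{q+1}^{-N+1/2}$ and $\delta_{q+1}^{-N-1}$ in the stated bounds.

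The plan is to factorize $(\chi_q\rho_q)^{1/2}=\chi_q^{1/2}\rho_q^{1/2}$ and $(\chi_q\rho_q)^{-1}=\chi_q^{-1}\rho_q^{-1}$; since $\rho_q$ depends only on $t$, all spatial derivatives fall on $\chi_q^{\pm 1/2}$ or $\chi_q^{\pm 1}$. I then write
$$\chi_q^{1/2} = G_+\!\Big(\tfrac{\RR_{\ell_q}}{\delta_{q+1}\lambda_q^{-2\alpha}}\Big),\qquad \chi_q^{-1} = G_-\!\Big(\tfrac{\RR_{\ell_q}}{\delta_{q+1}\lambda_q^{-2\alpha}}\Big),$$
with $G_+(z):=\chi(\langle z\rangle)^{1/2}$ and $G_-(z):=\chi(\langle z\rangle)^{-1}$, both smooth with uniformly bounded derivatives (since $\chi\ge 1$). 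Applying Faà di Bruno to this composition produces sums indexed by partitions of $N$; each term is a product of spatial derivatives of the rescaled argument $\RR_{\ell_q}/(\delta_{q+1}\lambda_q^{-2\alpha})$. By \eqref{e:R_ell-W} and the fact that $\RR_{\ell_q}$ has spatial support of bounded diameter (so that one can pass from $W^{N+5,1}$ to $H^N$ with $q$-independent constants), each spatial derivative costs a factor $\ell_q^{-1}$ starting from a base $\lambda_q^{10}$, and the rescaling supplies a factor $\delta_{q+1}^{-1}\lambda_q^{2\alpha}$. A Moser product estimate then delivers \eqref{chi-H4} and \eqref{est-cr-1} after multiplication by $\rho_q^{\pm 1/2}$, whose $L^\infty_t$ bound is given by \eqref{est-rhoq}.

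For the time-derivative estimates \eqref{t-chi-H4} and \eqref{est-t-cr-1}, I would use
$$\partial_t(\chi_q\rho_q)^{1/2}=\tfrac{1}{2}(\chi_q\rho_q)^{-1/2}\partial_t(\chi_q\rho_q),\qquad \partial_t(\chi_q\rho_q)^{-1}=-(\chi_q\rho_q)^{-2}\partial_t(\chi_q\rho_q),$$
expand $\partial_t(\chi_q\rho_q)=(\partial_t\chi_q)\rho_q+\chi_q\partial_t\rho_q$, and control $\partial_t\chi_q$ by the chain rule combined with \eqref{e:R_ell-W} at $L=1$, while $\partial_t\rho_q$ is already bounded by \eqref{pt-rho-q}. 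Taking $H^N$ norms reduces to another Moser product estimate together with the already-established \eqref{chi-H4}--\eqref{est-cr-1}. The main obstacle I expect is purely combinatorial — tracking the many Faà di Bruno terms carefully so as not to accumulate an extra power of $\ell_q^{-1}=\lambda_q^{60}$, since an unaccounted factor there would destroy the bound; the parameter choices in \eqref{b-beta} leave enough slack to absorb constant losses of $\lambda_q^{C\alpha}$ or Sobolev-embedding constants, so no genuinely new analytic input beyond the lower bound $\chi_q\rho_q\gtrsim\delta_{q+1}$ and the Leibniz/chain rules is needed.
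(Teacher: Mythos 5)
Your proposal follows essentially the same route as the paper: factor out the purely time-dependent $\rho_q^{\pm1/2}$ (bounded via \eqref{est-rhoq}), exploit $\chi\ge 1$ so that $\chi_q^{\pm1/2}$, $\chi_q^{-1}$ are smooth compositions applied to the rescaled stress $\RR_{\ell_q}/(\delta_{q+1}\lambda_q^{-2\alpha})$, and apply a Moser-type composition estimate of the form $\|F(u)\|_{H^N}\lesssim_N(1+\|u\|_{L^\infty})^{N-1}\|u\|_{H^N}$ together with \eqref{e:R_ell-W}; the time derivatives are then handled exactly as you describe, by the product/chain rule combined with \eqref{e:R_ell-W} at $L=1$ and \eqref{pt-rho-q}. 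The only cosmetic difference is that the paper differentiates the product $\chi_q^{1/2}\rho_q^{1/2}$ in time directly rather than writing $\partial_t(\chi_q\rho_q)^{1/2}=\tfrac12(\chi_q\rho_q)^{-1/2}\partial_t(\chi_q\rho_q)$, which changes nothing in the resulting bounds.
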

\begin{proof}
Since $\chi(x)\ge1$, $\rho_q\sim \delta_{q+1}$  and
    \begin{align*}
    \chi_q\rho_q=\chi\Big(\Big\langle\tfrac{\RR_{\ell_q}}{\delta_{q+1}\lambda^{-2\alpha}_q}\Big\rangle\Big)\rho_q,
    \end{align*}
we infer from \eqref{e:R_ell-W} that
\begin{align*}
\|(\chi_q\rho_q)^{1/2}\|_{L^\infty_tH^N}=&\|\chi^{1/2}_q\|_{L^\infty_t H^N}\|\rho_q^{1/2}(t)\|_{L^\infty}\\
\le&C_N\delta^{1/2}_{q+1}\Big(1+\Big{\|}\frac{\RR_{\ell_q}}{\delta_{q+1}\lambda^{-2\alpha}_q}\Big{\|}_{L^\infty_{t,x}}\Big)^{N-1}\Big{\|}\frac{\RR_{\ell_q}}{\delta_{q+1}\lambda^{-2\alpha}_q}\Big{\|}_{L^\infty_tH^N}\\
\le& C_N\delta^{-N+1/2}_{q+1}\lambda^{10N+2N\alpha}_q\ell^{-N}_q
\end{align*}
and
\begin{align}
 \|(\chi_q\rho_q)^{-1}\|_{L^\infty_tH^N}\le& C_N\delta^{-1}_{q+1}\Big(1+\Big{\|}\frac{\RR_{\ell_q}}{\delta_{q+1}\lambda^{-2\alpha}_q}\Big{\|}_{L^\infty_{t,x}}\Big)^{N-1}\Big{\|}\frac{\RR_{\ell_q}}{\delta_{q+1}\lambda^{-2\alpha}_q}\Big{\|}_{L^\infty_tH^N}\nonumber\\
\le& C_N\delta^{-N-1}_{q+1}\lambda^{10N+2N\alpha}_q\ell^{-N}_q. \nonumber
\end{align}
With the help of \eqref{e:R_ell-W} and \eqref{pt-rho-q}, one deduces  that
\begin{align*}
&\|\partial_t(\chi_q\rho_q)^{1/2}\|_{L^\infty_tH^N}\\
\le&\|\partial_t\chi_q^{1/2}\|_{L^\infty_tH^N}\|\rho_q^{1/2}(t)\|_{L^\infty}+
\|\chi_q^{1/2}\|_{L^\infty_tH^N}\|\partial_t\rho_q^{1/2}(t)\|_{L^\infty}\\
\le& C_N\delta^{1/2}_{q+1}\Big{\|}\frac{\partial_t\RR_{\ell_q}}{\delta_{q+1}\lambda^{-2\alpha}_q}\Big{\|}_{L^\infty_tH^N}\Big(1+\Big{\|}\frac{\RR_{\ell_q}}{\delta_{q+1}\lambda^{-2\alpha}_q}\Big{\|}_{L^\infty_{t,x}}\Big)^{N-1}\Big{\|}\frac{\RR_{\ell_q}}{\delta_{q+1}\lambda^{-2\alpha}_q}\Big{\|}_{L^\infty_tH^N}\\
&+ C_N\delta^{-1/2}_{q+1}\lambda^{20}_q\ell^{-1}_q\Big(1+\Big{\|}\frac{\RR_{\ell_q}}{\delta_{q+1}\lambda^{-2\alpha}_q}\Big{\|}_{L^\infty_{t,x}}\Big)^{N-1}\Big{\|}\frac{\RR_{\ell_q}}{\delta_{q+1}\lambda^{-2\alpha}_q}\Big{\|}_{L^\infty_tH^N}\\
\le& C_N\delta^{-N-1/2}_{q+1}\lambda^{10N+10+2(N+1)\alpha}_q\ell^{-2N}_q,
\end{align*}
and
\begin{align*}
&\|\partial_t(\chi_q\rho_q)^{-1}\|_{L^\infty_tH^N}\\
\le&\|\partial_t\chi_q^{-1}\|_{L^\infty_tH^N}\|\rho_q^{-1}(t)\|_{L^\infty}+
\|\chi_q^{-1}\|_{L^\infty_tH^N}\|\partial_t\rho_q^{-1}(t)\|_{L^\infty}\\
\le& C_N\delta^{-1}_{q+1}\Big{\|}\frac{\partial_t\RR_{\ell_q}}{\delta_{q+1}\lambda^{-2\alpha}_q}\Big{\|}_{L^\infty_tH^N}\Big(1+\Big{\|}\frac{\RR_{\ell_q}}{\delta_{q+1}\lambda^{-2\alpha}_q}\Big{\|}_{L^\infty_{t,x}}\Big)^{N-1}\Big{\|}\frac{\RR_{\ell_q}}{\delta_{q+1}\lambda^{-2\alpha}_q}\Big{\|}_{L^\infty_tH^N}\\
&+ C_N\delta^{-1}_{q+1}\lambda^{20}_q\ell^{-1}_q\Big(1+\Big{\|}\frac{\RR_{\ell_q}}{\delta_{q+1}\lambda^{-2\alpha}_q}\Big{\|}_{L^\infty_{t,x}}\Big)^{N-1}\Big{\|}\frac{\RR_{\ell_q}}{\delta_{q+1}\lambda^{-2\alpha}_q}\Big{\|}_{L^\infty_tH^N}\\
\le& C_N\delta^{-N-2}_{q+1}\lambda^{10N+10+2(N+1)\alpha}_q\ell^{-2N}_q.
\end{align*}
Hence we complete the proof of Proposition \ref{cr1/2}.
\end{proof}}
Based on Proposition \ref{cr1/2}, we are in position to estimate ${a}_{(k,q)}$.
\begin{proposition}[Estimates for ${a}_{(k,q)}$]\label{est-ak}{For $N\ge 2$, we have
\begin{align}
&\|{a}_{(k,q)}\|_{L^\infty_t H^N}\le C_N\ell^{-2N}_q, \label{akq-HN} \\
&\|\partial_t {a}_{(k,q)}\|_{L^\infty_t H^N}\le  C_N\ell^{-6N}_q.\label{t-akq-HN}
\end{align}}
\end{proposition}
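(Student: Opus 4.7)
The plan is to estimate $a_{(k,q)} = \eta_q \cdot a_k({\rm Id} - \RR_{\ell_q}/(\chi_q\rho_q)) \cdot (\chi_q\rho_q)^{1/2}$ by combining the Leibniz rule with a Moser-type composition estimate for the smooth function $a_k$, fed by the bounds in Proposition~\ref{cr1/2} and Proposition~\ref{p:estimates-for-mollified}. The preliminary step is to verify the pointwise bound $|\RR_{\ell_q}/(\chi_q\rho_q)|\lesssim \lambda_q^{-2\alpha}\lesssim 1$: from \eqref{def-chi} one has $\chi(\langle z\rangle)\gtrsim |z|$ uniformly, which forces $\chi_q \gtrsim |\RR_{\ell_q}|/(\delta_{q+1}\lambda_q^{-2\alpha})$, and \eqref{est-rhoq} gives $\rho_q\gtrsim \delta_{q+1}$. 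Thus ${\rm Id}-\RR_{\ell_q}/(\chi_q\rho_q)$ remains in a compact neighborhood of ${\rm Id}$ on which the geometric Lemma~\ref{first S} provides the smooth coefficient $a_k$ with uniform $C^N$ bounds.

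Next, I would control $\|\RR_{\ell_q}/(\chi_q\rho_q)\|_{L^\infty_t H^N}$ by Leibniz, using \eqref{e:R_ell-W} for $\RR_{\ell_q}$ and \eqref{est-cr-1} for $(\chi_q\rho_q)^{-1}$, and then invoke the standard Moser composition inequality $\|f(g)\|_{H^N}\lesssim_N \|f\|_{C^N}(1+\|g\|_{L^\infty})^{N-1}\|g\|_{H^N}$ to transfer this to a bound on $\|a_k({\rm Id}-\RR_{\ell_q}/(\chi_q\rho_q))\|_{L^\infty_t H^N}$, with a prefactor depending only on $\|a_k\|_{C^N}$. A second Leibniz expansion with $(\chi_q\rho_q)^{1/2}$ (controlled by \eqref{chi-H4}) and with $\eta_q$ (using $\|D^M\eta_q\|_{L^\infty}\lesssim \lambda_q^M$ from \eqref{eta} together with compact support in $\Omega_{q+1}$) will yield \eqref{akq-HN}. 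For the time derivative \eqref{t-akq-HN}, the identical scheme applies: $\partial_t$ is distributed via Leibniz among the three factors, using the chain rule $\partial_t a_k({\rm Id}-\RR_{\ell_q}/(\chi_q\rho_q)) = -(\nabla a_k)(\cdots)\,\partial_t(\RR_{\ell_q}/(\chi_q\rho_q))$ and invoking the $\partial_t$ bounds \eqref{t-chi-H4}, \eqref{est-t-cr-1} in place of \eqref{chi-H4}, \eqref{est-cr-1}, together with \eqref{e:R_ell-W} and \eqref{pt-rho-q} for the time derivatives of $\RR_{\ell_q}$ and $\rho_q$.

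The main obstacle is the power-of-$\lambda_q$ bookkeeping in the final step. Proposition~\ref{cr1/2} introduces factors of the form $\delta_{q+1}^{-O(N)}\lambda_q^{O(N)+O(N\alpha)}\ell_q^{-O(N)}$, and after combining them across the Leibniz and Faà di Bruno expansions one must verify that the aggregate exponent fits inside $\ell_q^{-2N}=\lambda_q^{120N}$, respectively $\ell_q^{-6N}=\lambda_q^{360N}$. Because $\ell_q=\lambda_q^{-60}$ while $\delta_{q+1}^{-1}=\lambda_q^{2b\beta}$ with $\beta=b^{-4}$ and $\alpha\le b^{-7}$ as in \eqref{b-beta}, every $\delta_{q+1}^{-1}$ or $\lambda_q^{\alpha}$ that appears contributes only a negligible power of $\lambda_q$ which is absorbed into $\ell_q^{-O(1)}$ by taking $a$ sufficiently large. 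Ensuring, via interpolation between $L^\infty$ and $H^N$, that the top-order Faà di Bruno term dominates, and that the multiplicative contributions from $(\chi_q\rho_q)^{1/2}$ and $\eta_q$ together produce no more than $\ell_q^{-2N}$ (respectively $\ell_q^{-6N}$ after one extra $\partial_t$), is the essential technical accounting.
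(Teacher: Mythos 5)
Your proposal follows essentially the same route as the paper's proof: a tame Leibniz bound for $\|\RR_{\ell_q}/(\chi_q\rho_q)\|_{L^\infty_t H^N}$ from \eqref{e:R_ell-W} and \eqref{est-cr-1}, a Moser-type composition estimate for $a_k$, a further product expansion with $(\chi_q\rho_q)^{1/2}$ and $\eta_q$, and absorption of all $\delta_{q+1}^{-O(N)}\lambda_q^{O(N)}$ factors into $\ell_q^{-2N}$ (resp.\ $\ell_q^{-6N}$ after one $\partial_t$) exactly as the paper does via \eqref{Con1}. Your explicit preliminary check that $|\RR_{\ell_q}/(\chi_q\rho_q)|\lesssim\lambda_q^{-2\alpha}$ keeps the argument of $a_k$ in its domain is a correct (and welcome) detail the paper leaves implicit, but it does not change the argument.
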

\begin{proof}{
Combining with \eqref{e:R_ell-W} and \eqref{est-cr-1}, we obtain that
\begin{align}
\Big\|\frac{\RR_{\ell_q}}{\chi_q \rho_q}\Big\|_{L^\infty_tH^N}\le& \|\RR_{\ell_q}\|_{L^\infty_{t,x}} \|(\chi_q\rho_q)^{-1}\|_{L^\infty_t H^N}+\|\RR_{\ell_q}\|_{L^\infty_t H^N} \|(\chi_q\rho_q)^{-1}\|_{L^\infty_{t,x}}\nonumber\\
\le&C_N\delta^{-N-1}_{q+1}\lambda^{10N+10+2N\alpha}_q\ell^{-N}_q+C\delta^{-1}_{q+1}\lambda^{10}_q\ell^{-N}_q\nonumber\\
\le&C_N \delta^{-N-1}_{q+1}\lambda^{10N+10+2N\alpha}_q\ell^{-N}_q.\label{R-cr-HN}
\end{align}
This inequality together with \eqref{e:R_ell-W} implies that
\begin{align}
 \Big{\|} a_{k}\Big({\rm Id}-\frac{\RR_{\ell_q}}{\chi_q\rho_q}\Big)\Big{\|}_{L^\infty_t H^N}\le &C_N\Big(1+\Big\|\frac{\RR_{\ell_q}}{\chi_q\rho_q}\Big\|_{L^\infty_{t,x}}\Big)^{N-1}\Big\|\frac{\RR_{\ell_q}}{\chi_q\rho_q}\Big\|_{L^\infty_tH^N}\nonumber\\
\le&C_N \delta^{-2N}_{q+1}\lambda^{20N+2N\alpha}_q\ell^{-N}_q.\label{ak-HN}
\end{align}
Since ${a}_{(k,q)}=\eta_q a_{k}\big({\rm Id}-\frac{\RR_{\ell_q}}{\chi_q\rho_q}\big)(\chi_q\rho_q)^{1/2}$, we have by \eqref{chi-H4}, \eqref{R-cr-HN} and \eqref{ak-HN} that
\begin{align*}
\|{a}_{(k,q)}\|_{L^\infty_t H^N}\le& \Big{\|} a_{k}\Big({\rm Id}-\frac{\RR_{\ell_q}}{\chi_q\rho_q}\Big)\Big{\|}_{L^\infty_t H^N}\|(\chi_q\rho_q)^{1/2}\|_{L^\infty_{t,x}}+C\|(\chi_q\rho_q)^{1/2}\|_{L^\infty_t H^N}\\
\le&C_N\delta^{-2N}_{q+1}\lambda^{20N+2N\alpha}_q\ell^{-N}_q+C_N\delta^{-N+1/2}_{q+1}\lambda^{10N+2N\alpha}_q\ell^{-N}_q\\
\le&C_N\delta^{-2N}_{q+1}\lambda^{20N+2N\alpha}_q\ell^{-N}_q.
\end{align*}
Owning to
\begin{align}\label{Con1}
\alpha<\frac{1}{4},  \,\,\delta^{-2}_{q+1}<\lambda^{\frac{1}{100}}_q, \,\,\ell_q=\lambda^{-60}_q,
\end{align}
 we obtain \eqref{akq-HN}.}

By \eqref{e:R_ell-W}, \eqref{est-cr-1} and \eqref{est-t-cr-1}, we have that
\begin{align}
\Big\|\partial_t\Big(\frac{\RR_{\ell_q}}{\chi_q \rho_q}\Big)\Big\|_{L^\infty_tH^N}\le& \|\partial_t\RR_{\ell_q}\|_{L^\infty_t H^N} \|(\chi_q\rho_q)^{-1}\|_{L^\infty_t H^N}+\|\RR_{\ell_q}\|_{L^\infty_t H^N} \|\partial_t(\chi_q\rho_q)^{-1}\|_{L^\infty_t H^N}\nonumber\\
\le&C_N  \delta^{-N-1}_{q+1}\lambda^{10N+10+2N\alpha}_q\ell^{-2N}_q+C_N \delta^{-N-2}_{q+1}\lambda^{10N+20+2(N+1)\alpha}_q\ell^{-3N}_q\nonumber\\
\le&C_N \delta^{-N-2}_{q+1}\lambda^{10N+20+2(N+1)\alpha}_q\ell^{-3N}_q.\label{t-R-cr-HN}
\end{align}
Therefore, one infers from  \eqref{e:R_ell-W}, \eqref{R-cr-HN} and \eqref{t-R-cr-HN} that
\begin{align}
\Big\|\partial_ta_{k}\Big({\rm Id}-\frac{\RR_{\ell_q}}{\chi_q \rho_q}\Big)\Big\|_{L^\infty_tH^N}\le&\Big\|(a'_k)\Big({\rm Id}-\frac{\RR_{\ell_q}}{\chi_q \rho_q}\Big)\Big\|_{L^\infty_tH^N}\Big\|\partial_t\Big(\frac{\RR_{\ell_q}}{\chi_q\rho_q}\Big)\Big\|_{L^\infty_tH^N}\nonumber\\
\lesssim& \Big(1+\Big\|\Big(\frac{\RR_{\ell_q}}{\chi_q\rho_q}\Big)\Big\|_{L^\infty_{t,x}}\Big)^{N-1}\Big\|\Big(\frac{\RR_{\ell_q}}{\chi_q\rho_q}\Big)\Big\|_{L^\infty_tH^N}\Big\|\partial_t\Big(\frac{\RR_{\ell_q}}{\chi_q \rho_q}\Big)\Big\|_{L^\infty_tH^N}\nonumber\\
\le&
C_N \delta^{-3N-2}_{q+1}\lambda^{30N+20+(10N+6)\alpha}_q\ell^{-4N}_q
\label{t-ak-HN}
\end{align}
Collecting \eqref{chi-H4}, \eqref{t-chi-H4}, \eqref{ak-HN} and \eqref{t-ak-HN} together shows that
\begin{align*}
\|\partial_t {a}_{(k,q)}\|_{L^\infty_t H^N}\le
&\|\eta_q\|_{ H^N}\Big\|\partial_t a_{k}\Big({\rm Id}-\frac{\RR_{\ell_q}}{\chi_q\rho_q}\Big)\Big\|_{L^\infty_t H^N}\|(\chi_q\rho_q)^{1/2}\|_{L^\infty_t H^N}\\
&+\|\eta_q\|_{ H^N}\Big\| a_{k}\Big({\rm Id}-\frac{\RR_{\ell_q}}{\chi_q\rho_q}\Big)\Big\|_{L^\infty_t H^N}\|\partial_t(\chi_q\rho_q)^{1/2}\|_{L^\infty_t H^N}\\
\le& C_N  \delta^{-4N-2}_{q+1}\lambda^{41N+20+(12N+6)\alpha}_q\ell^{-5N}_q,
\end{align*}
where we use the fact that $\|\eta_q\|_{H^N}\lesssim \lambda^N_{q}$. Thanks to \eqref{Con1}, we prove \eqref{t-akq-HN}. Thus, we complete the proof of Proposition \ref{est-ak}.
\end{proof}
\subsubsection{Construction the perturbation}
\noindent\\
\emph{Building blocks}\quad Assume that $\Phi:\mathbb{R}\rightarrow\mathbb{R}$ is a smooth cutoff function supported on the interval $(0, \lambda^{-1}_1]$ and $\int_{\R} \Phi''(x) \dd x=0$. We set $\phi=\frac{\dd^2}{\dd x^2}\Phi$ and normalize it in such a way that
\begin{align}\label{normalize}
\int_{\mathbb{R}}\phi^2\dd x=1.
\end{align}
For any small positive parameter $r$ such that $r^{-1}\in\ZZ$, we define
\begin{align}\label{def-phir}
\phi_{r}(x) :=r^{-\frac{1}{2}}\phi (r^{-1}x),\qquad \Phi_{r}(x) :=r^{-\frac{1}{2}}\Phi (r^{-1}x).
\end{align}
 We periodize $\phi_r$ and $\Phi_r$ so that  the resulting functions are periodic functions defined on $\mathbb{R}/\mathbb{Z}=:\mathbb{T}$, and we still denote the $\mathbb{T}$-periodic functions by $\phi_r$ and $\Phi_r$. Next, we choose a large number  $N_{\Lambda}$ such that
$N_{\Lambda}{k}\in \mathbb{Z}^3$ for all $k\in\Lambda$, where $\Lambda\subset\mathbb{S}^2\cap\mathbb{Q}^3$ is denoted in Remark \ref{Lambda}.

Let $r=\lambda^{-3/4}_{q+1}$ and $\mu=\lambda^{5/4}_{q+1}$, we define
\begin{align*}
&\phi_{k,r}(x): =\phi_{r}( N_{\Lambda}k\cdot x),\,\,\phi_{\bar{k},r, \mu}(x,t):=\phi_{r}( N_{\Lambda}(\bar{k}\cdot x+\mu t)),\,\,\phi_{\bar{\bar{k}},r}(x):=\phi_{r}( N_{\Lambda}\bar{\bar{k}}\cdot  x).
  \end{align*}
Let $\sigma=\lambda^{1/16}_{q+1}$, we define
\begin{align*}
 &\phi_{k,r,\sigma}(x):= \phi_{k,r}(\sigma x)=N^{-1}_{\Lambda}r^{2}\sigma^{-1}\Div{\big((\Phi_{r})'( N_{\Lambda}k\cdot \sigma x)k\big)},
 \end{align*}
 where
  \begin{align*}
\Phi’_{r}( N_{\Lambda}k\cdot \sigma x):=r(\Phi_{r})'(N_{\Lambda}k\cdot \sigma x).
   \end{align*}
   Therefore, we have
  \begin{align*}
 &\phi_{k,r,\sigma}(x)=N^{-1}_{\Lambda}r\sigma^{-1}\Div{\big(\Phi'_{r}(N_{\Lambda}k\cdot \sigma x)k\big)}.
 \end{align*}
Let
 \begin{align*}
 &\phi_{\bar{k},r,\sigma, \mu}(x,t):=\phi_{\bar{k},r, \mu}(\sigma x,\sigma t)={N^{-1}_{\Lambda}r\sigma^{-1}}\Div{\big(\Phi'_{r}(\sigma N_{\Lambda}(\bar{k}\cdot x+\mu t))\bar{k}}\big),\\
 &\phi_{\bar{\bar{k}},r,\sigma}(x):=\phi_{\bar{\bar{k}},r}(\sigma x)=N^{-1}_{\Lambda}r\sigma^{-1}\Div{\big(\Phi'_{r}( N_{\Lambda}\bar{\bar{k}}\cdot \sigma x)\bar{\bar{k}}\big)}.
\end{align*}
Finally, we define a set of functions $\{\phi_{k,\bar{k},\bar{\bar{k}}}\}_{k\in\Lambda}$ and $\{\phi_{k,\bar{k},\bar{\bar{k}},\sigma}\}_{k\in\Lambda}:\TTT^3\times \R\to\R$ by
\begin{align*}
    \phi_{k,\bar{k},\bar{\bar{k}}}(x,t)=\phi_{k,r}(x)\phi_{\bar{k},r, \mu}(x,t)\phi_{\bar{\bar{k}},r}(x)
\end{align*}
and
\begin{align*}
\phi_{k,\bar{k},\bar{\bar{k}},\sigma}(x,t)
&:=\phi_{k,r,\sigma}(x-x_k)\phi_{\bar{k},r,\sigma, \mu}(x-x_k,t)\phi_{\bar{\bar{k}},r,\sigma}(x-x_k),\\
&= \phi_{k,\bar{k},\bar{\bar{k}}}(\sigma(x-x_k),\sigma t)
\end{align*}
where  $\{x_k\}_{k\in\Lambda}$ are shifts which guarantee that
\begin{align}\label{disjoint}
\phi_{k,\bar{k},\bar{\bar{k}},\sigma}\cdot\phi_{k',\bar{k}',\bar{\bar{k}}',\sigma}=0,
~~~k\neq k',~~k,k'\in\Lambda.
\end{align}
Readers can refer to \cite{BV-19} for this equality. Moreover, we deduce from \eqref{normalize} that
\begin{align}\label{phi-1}
    \|\phi_{k,\bar{k},\bar{\bar{k}},\sigma}\|_{L^2(\TTT^3)}=1.
\end{align}
From the definitions of $\phi_{k,\bar{k},\bar{\bar{k}}}$ and $\phi_{k,\bar{k},\bar{\bar{k}},\sigma}$, one easily concludes by $k\perp \bar{k}\perp \bar{\bar{k}}$ that
\begin{align}
 \Div(\phi^2_{k,\bar{k},\bar{\bar{k}}} \bar{k}\otimes \bar{k})&=\frac{1}{\mu}\partial_t\phi^2_{k,\bar{k},\bar{\bar{k}}}\bar{k}, \label{div-pt-phi}\\
    \Div(\phi^2_{k,\bar{k},\bar{\bar{k}},\sigma} \bar{k}\otimes \bar{k})&=\frac{1}{\mu}\partial_t\phi^2_{k,\bar{k},\bar{\bar{k}},\sigma}\bar{k}, \label{div-pt}
\end{align}
By \cite[Lemma 5.2]{2Beekie}, one immediately infers the following estimate:
 \begin{proposition}\label{guji1}
For $p\in[1,\infty]$, $m,n\in\mathbb{N}$, we have the following estimates
\begin{align*}
   &\big\|{D}^n_t{D}^m_x\phi_{k,\bar{k},\bar{\bar{k}},\sigma}\big\|_{L^\infty_t L^p(\TTT^3)}\lesssim N^{m+n}_{\Lambda}(r^{-1}\sigma)^{m+n}\mu^n r^{3(\frac{1}{p}-\frac{1}{2})}=N^{m+n}_{\Lambda}\lambda^{\frac{13}{16}m+\frac{33}{16}n-\frac{9}{4}(\frac{1}{p}-\frac{1}{2})}_{q+1}.
\end{align*}
\end{proposition}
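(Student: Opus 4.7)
The plan is a two-step reduction. First, remove the outer rescaling by $\sigma$; then, exploit the pairwise orthogonality of $k,\bar k,\bar{\bar k}$ to factorize the three-dimensional $L^p$ norm into one-dimensional pieces.

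Starting from the identity $\phi_{k,\bar k,\bar{\bar k},\sigma}(x,t) = \phi_{k,\bar k,\bar{\bar k}}(\sigma(x-x_k),\sigma t)$ recorded just above the statement, the chain rule gives
\[
D_x^m D_t^n \phi_{k,\bar k,\bar{\bar k},\sigma}(x,t) = \sigma^{m+n}\bigl(D_x^m D_t^n \phi_{k,\bar k,\bar{\bar k}}\bigr)(\sigma(x-x_k),\sigma t).
\]
Because $\phi_{k,\bar k,\bar{\bar k}}$ is $\TTT^3$-periodic and $\sigma$ is chosen as a lattice-compatible power of $\lambda_{q+1}$, the dilation $y\mapsto \phi_{k,\bar k,\bar{\bar k}}(\sigma y,\sigma t)$ remains $\TTT^3$-periodic, and a change of variables preserves the $L^p(\TTT^3)$ norm. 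Hence it is enough to prove
\[
\|D_x^m D_t^n \phi_{k,\bar k,\bar{\bar k}}\|_{L_t^\infty L^p(\TTT^3)} \lesssim N_\Lambda^{m+n}\, r^{-(m+n)}\,\mu^n\, r^{3(1/p-1/2)}.
\]

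For this, I would apply Leibniz to the triple product $\phi_{k,r}(x)\phi_{\bar k,r,\mu}(x,t)\phi_{\bar{\bar k},r}(x)$, observing that time derivatives fall only on the middle factor. Since $k\perp\bar k\perp\bar{\bar k}$ by the construction underlying Lemma~\ref{first S}, the orthogonal rotation sending $(k,\bar k,\bar{\bar k})$ to the standard basis realigns each factor so that it depends on a single coordinate; Fubini then reduces the $L^p(\TTT^3)$ norm of the product to the product of three one-dimensional $L^p(\TTT)$ norms. For each profile $\phi_r(y)=r^{-1/2}\phi(y/r)$ periodized to $\TTT$, combining the prefactor $r^{-1/2}$ with the Jacobian from rescaling gives $\|\phi_r^{(j)}\|_{L^p(\TTT)}\lesssim r^{-j}r^{1/p-1/2}$. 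Each spatial derivative in the direction of $k$, $\bar k$, or $\bar{\bar k}$ then contributes $N_\Lambda r^{-1}$ via the chain rule, and each time derivative on $\phi_{\bar k,r,\mu}$ contributes an extra $\mu$ on top of the $N_\Lambda r^{-1}$, since $\partial_t$ produces $N_\Lambda \mu \phi_r'$. Summing over the Leibniz partition $m=m_1+m_2+m_3$ and multiplying through collapses to $N_\Lambda^{m+n}r^{-(m+n)}\mu^n r^{3(1/p-1/2)}$, and reinserting the $\sigma^{m+n}$ from the first step completes the bound.

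The main subtlety is bookkeeping rather than analysis: one must check that the rescaled composition stays genuinely $\TTT^3$-periodic (which relies on $N_\Lambda k,N_\Lambda\bar k,N_\Lambda\bar{\bar k}\in\ZZ^3$ together with the compatibility of $\sigma,r^{-1}$ with the underlying lattice) and that the Fubini factorization after the orthogonal rotation is legitimate on the torus. Once these structural points are in place, the estimate is a direct consequence of the one-dimensional concentration scaling of $\phi_r$; indeed, this is essentially the content of \cite[Lemma 5.2]{2Beekie} invoked in the statement, so the task reduces to matching the exponents produced by the parameters $r=\lambda_{q+1}^{-3/4}$, $\mu=\lambda_{q+1}^{5/4}$, $\sigma=\lambda_{q+1}^{1/16}$ fixed earlier in this subsection.
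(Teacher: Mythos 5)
Your proposal is correct and is exactly the computation the paper delegates to the citation of \cite[Lemma 5.2]{2Beekie}: the paper offers no proof beyond invoking that lemma, and your rescaling-plus-orthogonal-factorization argument, together with the one-dimensional scaling $\|\phi_r^{(j)}\|_{L^p(\TTT)}\lesssim r^{-j}r^{1/p-1/2}$, is precisely its content. The exponent bookkeeping ($r^{-1}\sigma=\lambda_{q+1}^{13/16}$, $r^{-1}\sigma\mu=\lambda_{q+1}^{33/16}$, $r^{3(1/p-1/2)}=\lambda_{q+1}^{-\frac94(1/p-1/2)}$) checks out.
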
 Before constructing the perturbation, we introduce $\psi,\Psi\in C^{\infty}(\mathbb{T})$ such that $\psi=\Psi''$ and $\int_{\TTT}\Psi'\dd x=0$. Then we define
\begin{align*}
    \psi_k=\psi(N_{\Lambda}\lambda_{q+1}k\cdot x), \qquad \Psi_k=\Psi(N_{\Lambda}\lambda_{q+1}k\cdot x), \quad k\in\Lambda.
\end{align*}
We assume that
{\begin{align}\label{psi-1}
    \|\psi_k\|^2_{L^2(\TTT^3)}=1.
\end{align}}
Note that $|k|=1$, one deduces that
\begin{align}\label{psik}
\psi_{k} \bar{k}  =N^{-1}_{\Lambda}\lambda^{-1}_{q+1}\Div\big({\Psi'_k}k\otimes\bar{k}\big),
\end{align}
where
\begin{align*}
    \Psi'_k:=\Psi'(N_{\Lambda}\lambda_{q+1}k\cdot x).
\end{align*}
We  construct the principal perturbation $ \wpq$ as follows:
\begin{equation}\label{def-wpq-1}
 \begin{aligned}
 \wpq&:=\sum_{k\in\Lambda}a_{(k,q)} \phi_{k,\bar{k},\bar{\bar{k}},\sigma}\psi_k\bar{k}.
 \end{aligned}
 \end{equation}
Here $\phi_{k,\bar{k},\bar{\bar{k}},\sigma}\psi_k\bar{k}$ is the so-called box flow introduced in \cite{MY}.  Since $\wpq$ is not divergence-free,  we need to construct the incompressibility corrector of  $\wpq$  to ensure that the perturbation is divergence-free. Note that
\begin{align*}
 \Div\big({\Psi'_k}\bar{k}\otimes k\big)=0, \quad \forall k\perp \bar{k},
\end{align*}
We rewrite $\wpq$  by \eqref{psik}
\begin{align}
 \wpq
 =&\sum_{k\in\Lambda}N^{-1}_{\Lambda}\lambda^{-1}_{q+1}a_{(k,q)}\phi_{k,\bar{k},\bar{\bar{k}},\sigma}{\Div\big({\Psi'_k}(k\otimes \bar{k}-\bar{k}\otimes k)\big)}. \label{def-wpq}
\end{align}
Based on this equality, we define $\wcq$ by
\begin{align}\label{def-wcq}
\wcq=&\sum_{k\in\Lambda}N^{-1}_{\Lambda}\lambda^{-1}_{q+1}{\Psi'_k}{\Div\big(a_{(k,q)} \phi_{k,\bar{k},\bar{\bar{k}},\sigma} (k\otimes \bar{k}-\bar{k}\otimes k)\big)}.
\end{align}
One easily deduces that
\begin{align*}
&\Div(\wpq+\wcq)
=\sum_{k\in\Lambda}N^{-1}_{\Lambda}\lambda^{-1}_{q+1}\Div\Div\big(a_{(k,q)} \phi_{k,\bar{k},\bar{\bar{k}},\sigma}\Psi'_k(k\otimes \bar{k}-\bar{k}\otimes k)\big)
=0,
\end{align*}
where we have used the fact that $\Div\Div M=0$ for any anti-symmetric matrix $M$.

Next, we define $\wttq$ by
\begin{equation}\label{wttq}
 \begin{aligned} \wttq= -\frac{1}{\mu}\sum_{k\in\Lambda}a^2_{(k,q)}
 \PH\PP\big(\phi^2_{k,\bar{k},\bar{\bar{k}},\sigma} \bar{k} \big)=:-\frac{1}{\mu}\sum_{k\in\Lambda}a^2_{(k,q)}
 \Ph\big(\phi^2_{k,\bar{k},\bar{\bar{k}},\sigma} \bar{k} \big),
 \end{aligned}
 \end{equation}
where $\PH$ is the Helmholtz  projector onto divergence-free vector fields. Since $\Div \Ph\big(\phi^2_{k,\bar{k},\bar{\bar{k}},\sigma} \bar{k} \big)=0$, we have
\[-\Delta \Ph\big(\phi^2_{k,\bar{k},\bar{\bar{k}},\sigma} \bar{k} \big)=\curl\curl \Ph\big(\phi^2_{k,\bar{k},\bar{\bar{k}},\sigma} \bar{k} \big).\]
Based on this equality, we have
\begin{align*}
   \Ph\big(\phi^2_{k,\bar{k},\bar{\bar{k}},\sigma} \bar{k} \big)&=(-\Delta)^{-1}\curl\curl \Ph\big(\phi^2_{k,\bar{k},\bar{\bar{k}},\sigma} \bar{k} \big)\\
   &=\curl \Big(\sigma^{-1}\big((-\Delta)^{-1}\curl \Ph (\phi^2_{k,\bar{k},\bar{\bar{k}}}\bar{k})\big)(\sigma (x-x_k), \sigma t)\Big)\\
   &=:\curl(\sigma^{-1} G(\sigma (x-x_k), \sigma t)).
\end{align*}
Then we define
 \begin{equation}\label{wttqc}
 \begin{aligned}
 \wttqc:= \frac{-1}{\mu}\sum_{k\in\Lambda}\nabla{a}^2_{(k,q)}\times \sigma^{-1}G(\sigma (x-x_k), \sigma t) .
 \end{aligned}
 \end{equation}
 We immediately verify that
\begin{align}
   \Div (\wttq+  \wttqc)=\Div\curl\Big(-\frac{1}{\mu}\sum_{k\in\Lambda}a_{(k,q)}\sigma^{-1}G(\sigma (x-x_k), \sigma t)\Big)=0.\label{wttq+wttqc}
\end{align}
Finally, we define $\wtq$ by solving the following Cauchy problem:
 \begin{equation}
\left\{ \begin{alignedat}{-1}
&\del_t \wtq-\Delta \wtq+\Div (\wtq\ootimes \wtq)+\Div(\wtq\ootimes\ulqnl)\\
&\qquad\qquad\qquad\qquad\quad+\Div(\ulqnl\ootimes \wtq)  +\nabla p^{(\textup{ns})}_{q+1}  = F_{q+1},
\\
 & \Div \wtq = 0,
  \\
  & \wtq |_{t=0}=  0 ,
\end{alignedat}\right.
 \label{e:wt}\tag{LNS}
\end{equation}
{where the force term $F_{q+1}$ is determined by $\RR^{rem}_q$, and some terms extracted from  $\Div(\wpq\otimes \wpq)$ and $\partial_t (\wpq+\wcq)$,  as  in \eqref{def-F_{q+1}}. To demonstrate the derivation of $F_{q+1}$ clearly,  we have to decompose  $\Div(\wpq\otimes \wpq)$ and $\partial_t (\wpq+\wcq)$ . }

\begin{proposition}[The decomposition of $\Div(\wpq\ootimes\wpq)$]\label{def-F2}Let
$\wpq$ be defined in \eqref{def-wpq-1}, then there exist a trace-free matrix $\RR^{(1)}_{(q+1)}$, pressure $P^{(1)}_{(q+1)}$ and the error term $F^{(1)}_{q+1}$ such that
\begin{align}
\Div(\wpq\ootimes \wpq)+\Div\RR_{\ell_q}=\Div \RR^{(1)}_{(q+1)}-\partial_t\wttq+F^{(1)}_{q+1}+\nabla P^{(1)}_{q+1}\label{decom-wpwp},
\end{align}
satisfying
\begin{align}
\|\RR^{(1)}_{q+1}\|_{L^\infty_t L^1}\lesssim {\ell^{-8}_q}\lambda^{-\frac{3}{16}}_{q+1}, \quad \|\RR^{(1)}_{q+1}\|_{L^\infty_t W^{5,1}}\lesssim {\ell^{-8}_q}\lambda^{\frac{77}{16}}_{q+1},\quad \spt_x \RR^{(1)}_{q+1}\subseteq\Omega_{q+1},\label{est-Rq1}
\end{align}
and
\begin{align}
    \|F^{(1)}_{q+1}\|_{\widetilde{L}^\infty_t B^{-\frac{3}{2}}_{2,1}}\lesssim\ell^{-22}_q \lambda^{-\frac{1}{32}}_{q+1},\quad \|F^{(1)}_{q+1}\|_{{L}^\infty_t H^4}\lesssim \ell^{-10}_q\lambda^{\frac{35}{8}}_{q+1}.\label{est-Fq1}
\end{align}
\end{proposition}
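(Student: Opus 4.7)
The plan is to split $\wpq\ootimes\wpq$ into four pieces that respectively cancel $\Div\RR_{\ell_q}$ via the geometric lemma, produce the divergence-form error $\RR^{(1)}_{q+1}$, yield $-\partial_t\wttq$ via the temporal identity \eqref{div-pt}, and collect residuals into $F^{(1)}_{q+1}$ and $\nabla P^{(1)}_{q+1}$.

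First I use the disjoint-support property \eqref{disjoint} to eliminate cross terms and write
\begin{align*}
\wpq\otimes\wpq=\sum_{k\in\Lambda}a^2_{(k,q)}\,\phi^2_{k,\bar k,\bar{\bar k},\sigma}\,\psi_k^2\,\bar k\otimes\bar k.
\end{align*}
Decomposing $\phi^2\psi_k^2=1+(\psi_k^2-1)+\psi_k^2(\phi^2-1)$ and invoking the geometric Lemma \ref{first S} through the ansatz \eqref{def-akq} together with \eqref{R=Rq} gives the low-frequency identity $\sum_k a^2_{(k,q)}\bar k\otimes\bar k=\eta_q^2\chi_q\rho_q\,\Id-\RR_{\ell_q}$. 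After taking divergence and adding $\Div\RR_{\ell_q}$, the Reynolds stress is cancelled and the leftover $\nabla(\eta_q^2\chi_q\rho_q)$, together with the $\tfrac{1}{3}\nabla|\wpq|^2$ arising from the $\ootimes$-correction, is absorbed into $P^{(1)}_{q+1}$.

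Second, the piece $\sum_k a^2_{(k,q)}(\psi_k^2-1)\bar k\otimes\bar k$ has zero spatial mean and frequency $\lambda_{q+1}$, so the inverse-divergence operator from the appendix converts it into a symmetric trace-free tensor contributing to $\RR^{(1)}_{q+1}$ with a $\lambda_{q+1}^{-1}$ gain. For $\sum_k a^2_{(k,q)}\psi_k^2(\phi^2-1)\bar k\otimes\bar k$, I exploit $\bar k\cdot\nabla\psi_k=0$ (since $\bar k\perp k$) and the key identity \eqref{div-pt} $\bar k\cdot\nabla\phi^2=\tfrac{1}{\mu}\partial_t\phi^2$ to obtain
\begin{align*}
\Div\Big(\sum_k a^2_{(k,q)}\psi_k^2(\phi^2-1)\bar k\otimes\bar k\Big)=\sum_k\psi_k^2(\bar k\cdot\nabla a^2_{(k,q)})(\phi^2-1)\bar k+\frac{1}{\mu}\sum_k a^2_{(k,q)}\psi_k^2\,\partial_t\phi^2\,\bar k.
\end{align*}
The first sum is a Leibniz residual handled by inverse divergence and folded into $\RR^{(1)}_{q+1}$. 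Splitting $\psi_k^2=1+(\psi_k^2-1)$ in the second sum moves the $(\psi_k^2-1)$ part (another high-frequency oscillation) into $F^{(1)}_{q+1}$, while the leading term $\tfrac{1}{\mu}\sum_k a^2_{(k,q)}\partial_t\phi^2\bar k$ is compared with $\partial_t\wttq$. Using \eqref{wttq} I compute
\begin{align*}
-\partial_t\wttq=\frac{1}{\mu}\sum_k\partial_t a^2_{(k,q)}\Ph(\phi^2\bar k)+\frac{1}{\mu}\sum_k a^2_{(k,q)}\Ph(\partial_t\phi^2\bar k),
\end{align*}
and after writing $a^2_{(k,q)}\partial_t\phi^2\bar k=a^2_{(k,q)}\Ph(\partial_t\phi^2\bar k)+a^2_{(k,q)}(I-\Ph)(\partial_t\phi^2\bar k)$, the difference consists of the amplitude time-derivative $\tfrac{1}{\mu}\sum_k\partial_t a^2_{(k,q)}\Ph(\phi^2\bar k)$, the commutator $\tfrac{1}{\mu}\sum_k[a^2_{(k,q)},\Ph](\partial_t\phi^2\bar k)$, and the gradient-type $(I-\Ph)$ piece. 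The gradient becomes $\nabla P^{(1)}_{q+1}$ (up to a further Leibniz term absorbed in $F^{(1)}_{q+1}$) and the commutator/amplitude pieces make up the rest of $F^{(1)}_{q+1}$. The support condition $\spt_x\RR^{(1)}_{q+1},\spt_x F^{(1)}_{q+1}\subseteq\Omega_{q+1}$ propagates from $\eta_q$ through every occurrence of $a_{(k,q)}$ and $\nabla a_{(k,q)}$.

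For the quantitative estimates I would combine Proposition \ref{guji1} (concentration gain $r^{3(1/p-1/2)}$ and derivative costs $\lambda_{q+1}^{13m/16+33n/16}$), Proposition \ref{est-ak} (amplitude and time-derivative bounds in $H^N$), the improved H\"older inequality from the appendix (to exploit scale separation between low-frequency amplitudes and high-frequency oscillations), and the $\lambda_{q+1}^{-1}$ gain from inverse divergence; bookkeeping these powers should match the claimed $\ell_q^{-8}\lambda_{q+1}^{-3/16}$ in $L^1$ and $\ell_q^{-8}\lambda_{q+1}^{77/16}$ in $W^{5,1}$ for $\RR^{(1)}_{q+1}$. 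The $F^{(1)}_{q+1}$ bound in $\widetilde L^\infty_tB^{-3/2}_{2,1}$ exploits the negative-Besov gain from the high spatial frequency of $\Ph(\phi^2\bar k)$ and of the commutators paired with the $\mu^{-1}=\lambda_{q+1}^{-5/4}$ prefactor, while the $L^\infty_tH^4$ bound follows by propagating $\|\RR_{\ell_q}\|_{L^\infty_tH^5}$ through $a_{(k,q)}$. The hard part will be the Helmholtz-projector contributions: because $\Ph$ is a nonlocal Calder\'on-Zygmund multiplier, it neither commutes with $a^2_{(k,q)}$ nor respects compact support exactly, so the commutator $[\Ph,a^2_{(k,q)}](\partial_t\phi^2\bar k)$ is the most delicate piece of $F^{(1)}_{q+1}$. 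Obtaining the stated $\lambda_{q+1}^{-1/32}$ gain requires a careful pairing of the $H^N$ regularity of $a^2_{(k,q)}$ from Proposition \ref{est-ak} against the high spatial frequency $\sigma/r=\lambda_{q+1}^{13/16}$ of $\partial_t\phi^2$ and the temporal scaling $\mu^{-1}$.
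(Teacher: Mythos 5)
Your strategy is essentially the paper's: split $\phi^2_{k,\bar k,\bar{\bar k},\sigma}\psi_k^2$ into its mean plus two zero-mean oscillations, use the geometric Lemma \ref{first S} to cancel $\Div\RR_{\ell_q}$ up to the gradient $\nabla(\eta_q^2\chi_q\rho_q)$, run the $\lambda_{q+1}$-frequency oscillation through the inverse-divergence iteration of Lemma \ref{tracefree} to build $\RR^{(1)}_{q+1}$, and extract $-\partial_t\wttq$ from the $\phi$-oscillation via \eqref{div-pt} and the Helmholtz splitting ${\rm Id}=\PH+\tfrac{\nabla\Div}{\Delta}$. Two points, however, do not survive as written.

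First, your grouping $\phi^2\psi_k^2=1+(\psi_k^2-1)+\psi_k^2(\phi^2-1)$ (the paper keeps $\phi^2$ attached to the $\psi$-oscillation, writing $1+\phi^2\,\PP(\psi_k^2)+\PP(\phi^2)$) produces, after the temporal identity, the extra term $\tfrac{1}{\mu}\sum_k a^2_{(k,q)}(\psi_k^2-1)\,\partial_t\phi^2_{k,\bar k,\bar{\bar k},\sigma}\,\bar k$, which you propose to move into $F^{(1)}_{q+1}$. As it stands this violates both bounds in \eqref{est-Fq1}: by Proposition \ref{guji1} it is of size $\mu^{-1}\|\phi\,\partial_t\phi\|_{L^{3/2}}\sim\lambda_{q+1}^{25/16}$ in $L^\infty_tL^{3/2}$ (hence in $\widetilde L^\infty_tB^{-3/2}_{2,1}$) and of size $\sim\lambda_{q+1}^{95/16}$ in $L^\infty_tH^4$, far above $\lambda_{q+1}^{-1/32}$ and $\ell_q^{-10}\lambda_{q+1}^{35/8}$. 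It must instead be fed into Lemma \ref{tracefree} against the frequency-$N_\Lambda\lambda_{q+1}$ profile $\PP(\psi_k^2)$ (each step gains $\lambda_{q+1}^{-1}$ and costs only $\lambda_{q+1}^{13/16}$ from derivatives of $a^2_{(k,q)}\partial_t\phi^2$), after which the leading tensor is $O(\ell_q^{-?}\lambda_{q+1}^{-3/16})$ in $L^1$ and $O(\lambda_{q+1}^{77/16})$ in $W^{5,1}$ and belongs in $\RR^{(1)}_{q+1}$, not in $F^{(1)}_{q+1}$; note also that a single ``$\lambda_{q+1}^{-1}$ gain'' is not enough anywhere — the remainder $E_{(N)}$ only becomes admissible after $N=20$ iterations. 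Second, the commutator $[a^2_{(k,q)},\Ph](\partial_t\phi^2\bar k)$ that you single out as the hardest piece never arises from your own splitting: since $\Ph$ commutes with $\partial_t$, the term $\tfrac{1}{\mu}a^2_{(k,q)}\Ph(\partial_t\phi^2\bar k)$ matches the corresponding summand of $-\partial_t\wttq$ from \eqref{wttq} exactly, and the genuine leftovers are $\tfrac{1}{\mu}\partial_t(a^2_{(k,q)})\Ph(\phi^2\bar k)$ and the $\tfrac{\nabla\Div}{\Delta}$ piece together with its Leibniz corrections — precisely the terms the paper estimates using embeddings of the form $L^\infty_tL^{p}\hookrightarrow\widetilde L^\infty_tB^{-3/2}_{2,1}$ and the $\sigma^{-1}$ gain obtained by writing $\PP(\phi^2)$ as a divergence. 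With these corrections your outline reproduces the paper's proof.
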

\begin{proof} By \eqref{phi-1} and \eqref{psi-1}, we have
\begin{align*}
\mathbb{P}_{=0}(\phi^2_{k, \bar{k}, \bar{\bar{k}}, \sigma})=1, \quad \mathbb{P}_{=0}(\psi^2_{k})=1.
\end{align*}
According to \eqref{def-wpq-1}, we obtain that
\begin{align*}
&\Div(\wpq\otimes \wpq)+\Div\RR_{\ell_q}\\
=&\sum_{k\in\Lambda}\Div\big(a^2_{(k,q)} \phi^2_{k, \bar{k}, \bar{\bar{k}}, \sigma} \psi^2_k\bar{k}\otimes \bar{k}\big)+\Div\RR_{\ell_q}\\
=&\sum_{k\in\Lambda} \Div\big(a^2_{(k,q)} \bar{k}\otimes \bar{k}\big)+\Div\RR_{\ell_q}+\sum_{k\in\Lambda} \Div\big(a^2_{(k,q)} \phi^2_{k,\bar{k},\bar{\bar{k}},\sigma} \mathbb{P}_{\neq 0}(\psi^2_k)\bar{k}\otimes \bar{k}\big)\\
&+\sum_{k\in\Lambda}\Div\big(a^2_{(k,q)} \mathbb{P}_{\neq 0}( \phi^2_{k, \bar{k}, \bar{\bar{k}}, \sigma} ) \bar{k}\otimes \bar{k}\big).
\end{align*}
By  the definition of $a_{(k,q)}$ in \eqref{def-akq} and Lemma \ref{first S}, we have
\begin{align*}
&\sum_{k\in\Lambda} \Div\big(a^2_{(k,q)} \bar{k}\otimes \bar{k}\big)+\Div\RR_{\ell_q}=\Div\big( \eta^2_q(\chi_q \rho_q){\rm Id }-\eta^2_q\RR_{\ell_q}\big)+\Div\RR_{\ell_q}
=\nabla\big(\eta^2_q \chi_q \rho_q\big),
\end{align*}
where we have used \eqref{R=Rq}. Thanks to $k\perp\bar{k}$, one obtains that
\begin{align*}
\sum_{k\in\Lambda} \Div\big(a^2_{(k,q)} \phi^2_{k,\bar{k},\bar{\bar{k}},\sigma} \mathbb{P}_{\neq 0}(\psi^2_k)\bar{k}\otimes \bar{k}\big)=\sum_{k\in\Lambda} \Div\big(a^2_{(k,q)}  \phi^2_{k,\bar{k},\bar{\bar{k}},\sigma}\bar{k}\otimes \bar{k} \big)\mathbb{P}_{\neq 0}(\psi^2_k).
\end{align*}
By making use of
\begin{align*}
&\sum_{k\in\Lambda}\Div\big(a^2_{(k,q)} \mathbb{P}_{\neq 0}( \phi^2_{k, \bar{k}, \bar{\bar{k}}, \sigma} ) \bar{k}\otimes \bar{k}\big)\\
=& \sum_{k\in\Lambda}\Div\big(a^2_{(k,q)}  \bar{k}\otimes \bar{k}\big)   \mathbb{P}_{\neq 0}( \phi^2_{k, \bar{k}, \bar{\bar{k}}, \sigma} )+ \sum_{k\in\Lambda}a^2_{(k,q)} \Div\big(  \mathbb{P}_{\neq 0}( \phi^2_{k, \bar{k}, \bar{\bar{k}}, \sigma} )\bar{k}\otimes \bar{k}\big),
\end{align*}
we have
\begin{align}
&\Div(\wpq\otimes \wpq)+\Div\RR_{\ell_q}\notag\\
=&\nabla\big(\eta^2_q \chi_q \rho_q\big)+\sum_{k\in\Lambda}\Div\big(a^2_{(k,q)}  \bar{k}\otimes \bar{k}\big)   \mathbb{P}_{\neq 0}( \phi^2_{k, \bar{k}, \bar{\bar{k}}, \sigma} )\notag\\
&+ \sum_{k\in\Lambda}a^2_{(k,q)} \Div\big(  \mathbb{P}_{\neq 0}( \phi^2_{k, \bar{k}, \bar{\bar{k}}, \sigma} )\bar{k}\otimes \bar{k}\big)+\sum_{k\in\Lambda} \Div\big(a^2_{(k,q)}  \phi^2_{k,\bar{k},\bar{\bar{k}},\sigma}\bar{k}\otimes \bar{k} \big)\mathbb{P}_{\neq 0}(\psi^2_k)\notag\\
=:&\nabla\big(\eta^2_q \chi_q \rho_q\big)+\sum_{k\in\Lambda}\Div\big(a^2_{(k,q)}  \bar{k}\otimes \bar{k}\big)   \mathbb{P}_{\neq 0}( \phi^2_{k, \bar{k}, \bar{\bar{k}}, \sigma} )+{\rm I+II}.\label{wpwp}
\end{align}
As for ${\rm I}$, one infers that
\begin{align*}
   {\rm I}=&\sum_{k \in\Lambda }a^2_{(k,q)} \Big(\mathbb{P}_{H}+\frac{\nabla\Div}{\Delta}\Big)\Div(     \mathbb{P}_{\neq 0}( \phi^2_{k,\bar{k},\bar{\bar{k}},\sigma}) \bar{k}\otimes\bar{k} )   \notag\\
   =&\frac{1}{\mu}\sum_{k \in\Lambda }a^2_{(k,q)}\partial_t\Ph  ( \phi^2_{k,\bar{k},\bar{\bar{k}},\sigma} \bar{k} )+\sum_{k \in\Lambda }a^2_{(k,q)}\frac{\nabla\Div\Div}{\Delta}(   \phi^2_{k,\bar{k},\bar{\bar{k}},\sigma} \bar{k}\otimes\bar{k} ) \notag,
\end{align*}
where we have used  \eqref{div-pt}. By the definition of $\wttq$, we have
\begin{align*}
&\frac{1}{\mu}\sum_{k \in\Lambda }a^2_{(k,q)}\partial_t\Ph  ( \phi^2_{k,\bar{k},\bar{\bar{k}},\sigma} \bar{k} )\\
=&-\partial_t\wttq-\frac{1}{\mu}\sum_{k \in\Lambda }\partial_ta^2_{(k,q)}\Ph  ( \phi^2_{k,\bar{k},\bar{\bar{k}},\sigma} \bar{k} )\\
=&-\partial_t\wttq-\frac{1}{\mu}\sum_{k \in\Lambda }\partial_ta^2_{(k,q)}  \Big({\rm  Id}-\frac{\nabla\Div}{\Delta}\Big)\PP ( \phi^2_{k,\bar{k},\bar{\bar{k}},\sigma} \bar{k} )\\
=&-\partial_t\wttq-\frac{1}{\mu}\sum_{k \in\Lambda }\partial_ta^2_{(k,q)} \PP ( \phi^2_{k,\bar{k},\bar{\bar{k}},\sigma} \bar{k} )\\
&+\frac{1}{\mu}\sum_{k \in\Lambda }\nabla\Big(\partial_ta^2_{(k,q)}\frac{\Div}{\Delta}\PP ( \phi^2_{k,\bar{k},\bar{\bar{k}},\sigma} \bar{k} ) \Big)-\frac{1}{\mu}\sum_{k \in\Lambda }\nabla\big(\partial_ta^2_{(k,q)}\big)\frac{\Div}{\Delta}\PP ( \phi^2_{k,\bar{k},\bar{\bar{k}},\sigma} \bar{k} ) .
\end{align*}
Moreover, we rewrite
\begin{align*}
&\sum_{k \in\Lambda }a^2_{(k,q)}\frac{\nabla\Div\Div}{\Delta}(   \phi^2_{k,\bar{k},\bar{\bar{k}},\sigma} \bar{k}\otimes\bar{k} )\\
=&\sum_{k \in\Lambda }\nabla\big(a^2_{(k,q)} \frac{\Div\Div}{\Delta}(   \phi^2_{k,\bar{k},\bar{\bar{k}},\sigma} \bar{k}\otimes\bar{k} )\big)-\sum_{k \in\Lambda }\nabla a^2_{(k,q)}\frac{\Div\Div}{\Delta}(   \phi^2_{k,\bar{k},\bar{\bar{k}},\sigma} \bar{k}\otimes\bar{k} ).
\end{align*}
Hence, we have
\begin{align}
&\sum_{k \in\Lambda }a^2_{(k,q)} \Div(   \mathbb{P}_{\neq 0}( \phi^2_{k,\bar{k},\bar{\bar{k}},\sigma})\bar{k}\otimes \bar{k} )  \notag\\
&-\partial_t\wttq-\frac{1}{\mu}\sum_{k \in\Lambda }\partial_ta^2_{(k,q)} \PP ( \phi^2_{k,\bar{k},\bar{\bar{k}},\sigma} \bar{k} )-\frac{1}{\mu}\sum_{k \in\Lambda }\nabla\big(\partial_ta^2_{(k,q)}\big)\frac{\Div}{\Delta}\PP ( \phi^2_{k,\bar{k},\bar{\bar{k}},\sigma} \bar{k} )  \notag\\
&+\nabla\Big(\sum_{k \in\Lambda }\frac{1}{\mu}\partial_ta^2_{(k,q)}\frac{\Div}{\Delta}\PP ( \phi^2_{k,\bar{k},\bar{\bar{k}},\sigma} \bar{k}) +\sum_{k \in\Lambda }a^2_{(k,q)} \frac{\Div\Div}{\Delta}(   \phi^2_{k,\bar{k},\bar{\bar{k}},\sigma} \bar{k}\otimes\bar{k} ) \Big) \notag\\
&-\sum_{k \in\Lambda }\nabla a^2_{(k,q)}\frac{\Div\Div}{\Delta}(   \phi^2_{k,\bar{k},\bar{\bar{k}},\sigma} \bar{k}\otimes\bar{k} ).\label{aPphi}
\end{align}
Plugging \eqref{aPphi} into \eqref{wpwp} yields that
\begin{align}
&\Div(\wpq\otimes \wpq)+\Div\RR_{\ell_q}\notag\\
=&\sum_{k\in\Lambda} \Div\big(a^2_{(k,q)} \phi^2_{k,\bar{k},\bar{\bar{k}},\sigma} \bar{k}\otimes \bar{k} \big)\mathbb{P}_{\neq 0}(\psi^2_k)+\sum_{k\in\Lambda}\Div\big(a^2_{(k,q)}  \bar{k}\otimes \bar{k}\big)   \mathbb{P}_{\neq 0}( \phi^2_{k, \bar{k}, \bar{\bar{k}}, \sigma} )\notag\\
&-\partial_t\wttq-\frac{1}{\mu}\sum_{k \in\Lambda }\partial_ta^2_{(k,q)} \PP ( \phi^2_{k,\bar{k},\bar{\bar{k}},\sigma} \bar{k} )-\frac{1}{\mu}\sum_{k \in\Lambda }\nabla\big(\partial_ta^2_{(k,q)}\big)\frac{\Div}{\Delta}\PP ( \phi^2_{k,\bar{k},\bar{\bar{k}},\sigma} \bar{k} )  \notag\\
&+\nabla\Big(\eta^2_q\chi_q\rho_q+\sum_{k \in\Lambda }\frac{1}{\mu}\partial_ta^2_{(k,q)}\frac{\Div}{\Delta}\PP ( \phi^2_{k,\bar{k},\bar{\bar{k}},\sigma} \bar{k}) +\sum_{k \in\Lambda }a^2_{(k,q)} \frac{\Div\Div}{\Delta}(   \phi^2_{k,\bar{k},\bar{\bar{k}},\sigma} \bar{k}\otimes\bar{k} ) \Big) \notag\\
&-\sum_{k \in\Lambda }\nabla a^2_{(k,q)}\frac{\Div\Div}{\Delta}(   \phi^2_{k,\bar{k},\bar{\bar{k}},\sigma} \bar{k}\otimes\bar{k} ).\notag
\end{align}
With regard to ${\rm II}$, we have
\begin{align*}
    &{\rm II}
    =\sum_{k\in\Lambda}\Div\big(a^2_{(k,q)} \phi^2_{k,\bar{k},\bar{\bar{k}},\sigma} \bar{k}\otimes \bar{k} \big)\sum_{l\neq 0}c_{l}e^{\ii2\pi N_{\Lambda}\lambda_{q+1}lk\cdot x}.
\end{align*}
Making use of Lemma \ref{tracefree} with $G=\Div\big(a^2_{(k,q)} \phi^2_{k,\bar{k},\bar{\bar{k}},\sigma} \bar{k}\otimes \bar{k} \big)$ and $\rho^{(n)}=(2\pi)^{-n+1}(-1)^ne^{\ii 2\pi k\cdot x}$,   one concludes  that there exist a trace-free matrix $\RR_{(N)}$, pressure $P_{(N)}$ and the error term $E_{(N)}$ such that
\begin{align}\label{decom-(a-phi)}
{\rm II}=\Div \RR_{(N)}+\nabla P_{(N)}+E_{(N)},
\end{align}
where
\begin{align*}
    &\RR^{ij}_{(N)}=\sum_{k\in\Lambda}\sum_{l\neq 0}b_{l}\sum_{n=1}^{N} (N_{\Lambda}l\lambda_{q+1})^{-n}A^{(n)}_{ij\alpha_n}(G)\big(\frac{\partial^{\alpha_n}}{{\Delta}^n}e^{\ii k\cdot x}\big)\circ{(N_{\Lambda}l\lambda_{q+1} x)},\\
    &P_{(N)}=\sum_{k\in\Lambda}\sum_{l\neq 0}b_{l}\sum_{n=1}^{N} (N_{\Lambda}l\lambda_{q+1})^{-n}B^{(n)}_{\alpha_n}(G)\big(\frac{\partial^{\alpha_n}}{{\Delta}^n}e^{\ii k\cdot x}\big)\circ{(N_{\Lambda}l\lambda_{q+1} x)}\\
    &E^i_{(N)}=\sum_{k\in\Lambda}\sum_{l\neq 0}b_{l}(N_{\Lambda}l\lambda_{q+1})^{-N}C^{(N)}_{i\alpha_N}(G) \big(\frac{\partial^{\alpha_N}}{{\Delta}^N}e^{\ii k\cdot x}\big)\circ{(N_{\Lambda}l\lambda_{q+1} x)}.
\end{align*}
Moreover, we infer from Lemma \ref{tracefree} that
\[\spt_x \RR_{(N)}=\spt_x a^2_{(k,q)}\subseteq\spt \,\eta_{q}=\Omega_{q+1}.\]
Particularly, we set {{$N=20$}} to obtain that
\begin{align}
&\Div(\wpq\ootimes \wpq)+\Div\RR_{\ell_q}=\Div(\wpq\otimes \wpq)+\Div\RR_{\ell_q}-\nabla \Big(\frac{1}{3}\wpq\cdot \wpq {\rm Id}\Big)\notag\\
=&\Div \RR_{(N)}-\partial_t\wttq+\Big(E_{(N)}+\sum_{k\in\Lambda}\Div\big(a^2_{(k,q)}  \bar{k}\otimes \bar{k}\big)   \mathbb{P}_{\neq 0}( \phi^2_{k, \bar{k}, \bar{\bar{k}}, \sigma} )-\frac{1}{\mu}\sum_{k \in\Lambda }\partial_ta^2_{(k,q)} \PP ( \phi^2_{k,\bar{k},\bar{\bar{k}},\sigma} \bar{k} )\notag\\
&-\frac{1}{\mu}\sum_{k \in\Lambda }\nabla\big(\partial_ta^2_{(k,q)}\big)\frac{\Div}{\Delta}\PP ( \phi^2_{k,\bar{k},\bar{\bar{k}},\sigma} \bar{k} ) -\sum_{k \in\Lambda }\nabla a^2_{(k,q)}\frac{\Div\Div}{\Delta}(   \phi^2_{k,\bar{k},\bar{\bar{k}},\sigma} \bar{k}\otimes\bar{k} )\Big)\notag\\
&+\nabla\Big(\eta^2_q\chi_q\rho_q+\sum_{k \in\Lambda }\frac{1}{\mu}\partial_ta^2_{(k,q)}\frac{\Div}{\Delta}\PP ( \phi^2_{k,\bar{k},\bar{\bar{k}},\sigma} \bar{k}) +\sum_{k \in\Lambda }a^2_{(k,q)} \frac{\Div\Div}{\Delta}(   \phi^2_{k,\bar{k},\bar{\bar{k}}} \bar{k}\otimes\bar{k} )\notag\\
&+P_{(N)}-\frac{1}{3}\wpq\cdot \wpq {\rm Id}\Big) \notag\\
=:&\Div \RR^{(1)}_{(q+1)}-\partial_t\wttq+F^{(1)}_{q+1}+\nabla P^{(1)}_{q+1},\label{eq-wpwp}
\end{align}
which proves \eqref{decom-wpwp} and $\spt_x \RR^{(1)}_{q+1}\subseteq\Omega_{q+1}$. Here
\begin{align*}
F^{(1)}_{q+1}=& E_{(N)}+\sum_{k\in\Lambda}\Div\big(a^2_{(k,q)}  \bar{k}\otimes \bar{k}\big)   \mathbb{P}_{\neq 0}( \phi^2_{k, \bar{k}, \bar{\bar{k}}, \sigma} )-\frac{1}{\mu}\sum_{k \in\Lambda }\partial_ta^2_{(k,q)} \PP ( \phi^2_{k,\bar{k},\bar{\bar{k}},\sigma} \bar{k} )\notag\\
&-\frac{1}{\mu}\sum_{k \in\Lambda }\nabla\big(\partial_ta^2_{(k,q)}\big)\frac{\Div}{\Delta}\PP ( \phi^2_{k,\bar{k},\bar{\bar{k}},\sigma} \bar{k} ) -\sum_{k \in\Lambda }\nabla a^2_{(k,q)}\frac{\Div\Div}{\Delta}(   \phi^2_{k,\bar{k},\bar{\bar{k}},\sigma} \bar{k}\otimes\bar{k} ).
\end{align*}

Now we estimate $\RR^{(1)}_{q+1}$. Since $A^{(n)}_{ij\alpha_n}$ is composed by the $n-1$th derivative of $\Div\big(a^2_{(k,q)} \phi^2_{k,\bar{k},\bar{\bar{k}},\sigma} \bar{k}\otimes \bar{k} \big)$, Proposition~\ref{guji1} and Proposition \ref{est-ak} implies that
\begin{align*}
    \|\RR^{(1)}_{q+1}\|_{L^\infty_t L^1}\lesssim& \sum_{n=1}^{10}\lambda^{-n}_{q+1}\|A^{(n)}_{ij\alpha_n}\|_{L^\infty_t L^1}\lesssim\sum_{n=1}^{{20}}\lambda^{-n}_{q+1}\|a^2_{(k,q)} \phi^2_{k,\bar{k},\bar{\bar{k}},\sigma}\|_{L^\infty_t W^{n,1}}\\
    \lesssim&\sum_{n=1}^{20}\lambda^{-n}_{q+1}
    (\|a^2_{(k,q)}\|_{L^\infty_{t,x}} \|\phi^2_{k,\bar{k},\bar{\bar{k}},\sigma}\|_{L^\infty_t W^{n,1}(\TTT^3)}+\|a^2_{(k,q)}\|_{L^\infty_t W^{n,1} } \|\phi^2_{k,\bar{k},\bar{\bar{k}},\sigma}\|_{L^\infty(\TTT^3)})\\
    \lesssim&\sum_{n=1}^{{20}}\lambda^{-\frac{3}{16}n}_{q+1}{\ell^{-8}_q}\lesssim\lambda^{-\frac{3}{16}}_{q+1}{\ell^{-8}_q},
\end{align*}
and
\begin{align*}
    \|\RR^{(1)}_{q+1}\|_{L^\infty_t W^{5,1}}\lesssim& \sum_{n=1}^{{20}}\lambda^{-n}_{q+1}(\|a^2_{(k,q)} \phi^2_{k,\bar{k},\bar{\bar{k}},\sigma}\|_{L^\infty_t W^{5+n,1}}+\lambda^5_{q+1}\|a^2_{(k,q)} \phi^2_{k,\bar{k},\bar{\bar{k}},\sigma}\|_{L^\infty_t W^{n,1}})\\
    \lesssim&\sum_{n=1}^{10}{\ell^{-8}_q}\lambda^{-\frac{3}{16}n+5}_{q+1}\lesssim\lambda^{\frac{77}{16}}_{q+1}{\ell^{-8}_q},
\end{align*}
where we use the fact that $\spt_x a_{(k,q)}\subseteq \Omega_q\subseteq [-\frac{1}{2},\frac{1}{2}]^3$ and $$\|a_{(k,q)}\cdot f\|_{L^{p}(\mathbb{R}^3)}\leq\|a_{(k,q)}\|_{L^p_1(\mathbb{R}^3)}\| f\|_{L^{p_2}(\Omega_q)},\quad\frac{1}{p_1}+\frac{1}{{p_2}}=\frac{1}{p}.$$
 We will apply this inequality multiple times in the future chapters.  Hence we derive \eqref{est-Rq1}. Now we aim to estimate $F^{(1)}_{q+1}$. Note that $C^{(N)}_{i\alpha}$ is made up of the $n$th derivative of $\Div\big(a^2_{(k,q)} \phi^2_{k,\bar{k},\bar{\bar{k}},\sigma} \bar{k}\otimes \bar{k} \big)$, we have
\begin{align}
    \|E_{(N)}\|_{\widetilde{L}^\infty_t B^{-\frac{3}{2}}_{2,1}}\lesssim& \|E_{(N)}\|_{{L}^\infty_t L^{\frac{3}{2}}}\lesssim\lambda^{-N}_{q+1}\|a^2_{(k,q)} \phi^2_{k,\bar{k},\bar{\bar{k}},\sigma}\|_{L^\infty W^{N+1,\frac{3}{2}}}\nonumber\\
    \lesssim&\ell^{-8}_q\lambda^{-N}_{q+1}\lambda^{\frac{13}{16}(N+1)+\frac{3}{4}}_{q+1}\lesssim\ell^{-8}_q\lambda^{-\frac{35}{16}}_{q+1}.\label{F1-1-B}
\end{align}
and
\begin{align}
    \|E_{(N)}\|_{{L}^\infty_t H^4}\lesssim& \lambda^{-N}_{q+1}\|a^2_{(k,q)} \phi^2_{k,\bar{k},\bar{\bar{k}},\sigma}\|_{L^\infty_t H^5}+\lambda^{-N+5}_{q+1}\|a^2_{(k,q)} \phi^2_{k,\bar{k},\bar{\bar{k}},\sigma}\|_{L^\infty_t H^1}\nonumber\\
    \lesssim&\ell^{-8}_q\lambda^{-N}_{q+1}\lambda^{\frac{83}{16}}_{q+1}+\ell^{-8}_q\lambda^{-N+\frac{111}{16}}_{q+1}\lesssim \ell^{-8}_q\lambda^{-10}_{q+1}.\label{F1-1-W}
\end{align}
In terms of $\sum_{k\in\Lambda}\Div\big(a^2_{(k,q)}  \bar{k}\otimes \bar{k}\big)   \mathbb{P}_{\neq 0}( \phi^2_{k, \bar{k}, \bar{\bar{k}}, \sigma} )$, we rewrite it as
\begin{align*}
&\sum_{k\in\Lambda}\Div\big(a^2_{(k,q)}  \bar{k}\otimes \bar{k}\big)   \mathbb{P}_{\neq 0}( \phi^2_{k, \bar{k}, \bar{\bar{k}}, \sigma} )
=\sum_{k\in\Lambda}\Div\big(a^2_{(k,q)}  \bar{k}\otimes \bar{k}\big)\frac{\Div \nabla}{\Delta}   ( \phi^2_{k, \bar{k}, \bar{\bar{k}}, \sigma} )\\
=&\sum_{k\in\Lambda}\Div\big(a^2_{(k,q)}  \bar{k}\otimes \bar{k}\big)\Div\Big(\sigma^{-1}\big(\frac{ \nabla}{\Delta}  \phi^2_{k, \bar{k}, \bar{\bar{k}}}\big)(\sigma x) \Big)\\
=&\sum_{k\in\Lambda}\Div\Big(\sigma^{-1}\big(\frac{ \nabla}{\Delta}  \phi^2_{k, \bar{k}, \bar{\bar{k}}}\big)(\sigma x) \otimes \Div\big(a^2_{(k,q)}  \bar{k}\otimes \bar{k}\big)\Big)\\
&-\sum_{k\in\Lambda}\sigma^{-1}\big(\frac{ \nabla}{\Delta}  \phi^2_{k, \bar{k}, \bar{\bar{k}}}\big)(\sigma x)\cdot\nabla \Div\big(a^2_{(k,q)}  \bar{k}\otimes \bar{k}\big).
\end{align*}
Since $spt_xa_{(k,q)}\subset \Omega_q\subset [-\frac{1}{2},\frac{1}{2}]^3$, we have
\begin{align*}
&\big\|\sum_{k\in\Lambda}\Div\big(a^2_{(k,q)}  \bar{k}\otimes \bar{k}\big)   \mathbb{P}_{\neq 0}( \phi^2_{k, \bar{k}, \bar{\bar{k}}, \sigma} )\big\|_{\widetilde{L}^\infty_t B^{-\frac{3}{2}}_{2,1}}\\
\le&C\sum_{k\in\Lambda}\big\|\sigma^{-1}\big(\frac{ \nabla}{\Delta}  \phi^2_{k, \bar{k}, \bar{\bar{k}}}\big)(\sigma x) \otimes \Div\big(a^2_{(k,q)}  \bar{k}\otimes \bar{k}\big)\big\|_{\widetilde{L}^\infty_t B^{-\frac{1}{2}}_{2,1}}\\
&+C\sum_{k\in\Lambda}\big\|\sigma^{-1}\big(\frac{ \nabla}{\Delta}  \phi^2_{k, \bar{k}, \bar{\bar{k}}}\big)(\sigma x)\cdot\nabla \Div\big(a^2_{(k,q)}  \bar{k}\otimes \bar{k}\big)\big\|_{\widetilde{L}^\infty_t B^{-\frac{3}{2}}_{2,1}}\\
\le&C\sigma^{-1}\sum_{k\in\Lambda}\big\|\big(\frac{ \nabla}{\Delta}  \phi^2_{k, \bar{k}, \bar{\bar{k}}}\big)(\sigma x) \otimes \Div\big(a^2_{(k,q)}  \bar{k}\otimes \bar{k}\big)\big\|_{{L}^\infty_t L^{\frac{3(1+\varepsilon)}{2-\varepsilon}}}\\
&+C\sigma^{-1}\sum_{k\in\Lambda}\big\|\big(\frac{ \nabla}{\Delta}  \phi^2_{k, \bar{k}, \bar{\bar{k}}}\big)(\sigma x)\cdot\nabla \Div\big(a^2_{(k,q)}  \bar{k}\otimes \bar{k}\big)\big\|_{{L}^\infty_t L^{\frac{3}{2}}}\\
\le&C\sigma^{-1}\sum_{k\in\Lambda}\big\|\frac{ \nabla}{\Delta}  \phi^2_{k, \bar{k}, \bar{\bar{k}}}\big\|_{{L}^\infty_t W^{1,1+\varepsilon}(\mathbb{T}^3)} \| \Div\big(a^2_{(k,q)}  \bar{k}\otimes \bar{k}\big)\big\|_{{L}^\infty_{t,x}}\\
&+C\sigma^{-1}\sum_{k\in\Lambda}\big\|\frac{ \nabla}{\Delta}  \phi^2_{k, \bar{k}, \bar{\bar{k}}}\big\|_{{L}^\infty_t W^{1,1}(\mathbb{T}^3)} \| \nabla\Div\big(a^2_{(k,q)}  \bar{k}\otimes \bar{k}\big)\big\|_{{L}^\infty_{t,x}},
\end{align*}
where $\varepsilon$ is some small constant and we use the fact that
$$\|f\cdot g\|_{L^p(\mathbb{R}^3)}\leq\|f \|_{L^p(\Omega)}\|g\|_{L^p(\Omega)},~~~~\text{if}~~\text{spt}_x~g\subset \Omega.$$
Using Proposition \ref{guji1}, we conclude that
\begin{align}
&\big\|\sum_{k\in\Lambda}\Div\big(a^2_{(k,q)}  \bar{k}\otimes \bar{k}\big)   \mathbb{P}_{\neq 0}( \phi^2_{k, \bar{k}, \bar{\bar{k}}, \sigma} )\big\|_{\widetilde{L}^\infty_t B^{-\frac{3}{2}}_{2,1}}
\le C\ell^{-12}_q\sigma^{-1}\lambda^{\frac{9\varepsilon}{4(1+\varepsilon)}}_{q+1}\le C\ell^{-12}_q\lambda^{-\frac{1}{32}}_{q+1},\label{F1-2-B}
\end{align}
where $\varepsilon<\frac{1}{71}$. Following this method, one immediately infers that
\begin{align}
&\big\|\sum_{k \in\Lambda }\nabla a^2_{(k,q)}\frac{\Div\Div}{\Delta}(   \phi^2_{k,\bar{k},\bar{\bar{k}},\sigma} \bar{k}\otimes\bar{k} )\big\|_{\widetilde{L}^\infty_t B^{-\frac{3}{2}}_{2,1}}
\le C\ell^{-12}_q\sigma^{-1}\lambda^{\frac{9\varepsilon}{4(1+\varepsilon)}}_{q+1}\le C\ell^{-12}_q\lambda^{-\frac{1}{32}}_{q+1},\label{F1-3-B}
\end{align}
By Proposition \ref{guji1} and ${L}^\infty_t L^{\frac{9}{8}}(\R^3)\hookrightarrow \widetilde{L}^\infty_t B^{-\frac{3}{2}}_{2,1}(\R^3)$, we have
\begin{align}
    &\frac{1}{\mu}\big\|\sum_{k \in\Lambda }\partial_ta^2_{(k,q)} \PP ( \phi^2_{k,\bar{k},\bar{\bar{k}},\sigma} \bar{k} )\big\|_{\widetilde{L}^\infty_t B^{-\frac{3}{2}}_{2,1}}+
    \frac{1}{\mu}\big\|\sum_{k \in\Lambda }\nabla\big(\partial_ta^2_{(k,q)}\big)\frac{\Div}{\Delta}\PP ( \phi^2_{k,\bar{k},\bar{\bar{k}},\sigma} \bar{k} )\big\|_{\widetilde{L}^\infty_t B^{-\frac{3}{2}}_{2,1}}\nonumber\\
    \le&C{\mu}^{-1}\sum_{k \in\Lambda }\big(\big\|\partial_ta^2_{(k,q)} \PP ( \phi^2_{k,\bar{k},\bar{\bar{k}},\sigma} \bar{k} )\big\|_{{L}^\infty_t L^{\frac{9}{8}}}+\big\|\nabla\big(\partial_ta^2_{(k,q)}\big)\frac{\Div}{\Delta}\PP ( \phi^2_{k,\bar{k},\bar{\bar{k}},\sigma} \bar{k} )\big\|_{{L}^\infty_t L^{\frac{9}{8}}}\big)\nonumber\\
    \le&C{\mu}^{-1}\sum_{k \in\Lambda }(\|\partial_ta^2_{(k,q)}\|_{L^\infty_{t,x}}+\|\nabla\partial_ta^2_{(k,q)}\|_{L^\infty_{t,x}})\|\PP ( \phi^2_{k,\bar{k},\bar{\bar{k}},\sigma} \bar{k} )\big\|_{{L}^\infty_t L^{\frac{9}{8}}}\nonumber\\
    \le&C{\mu}^{-1}\ell^{-22}_q\lambda^{\frac{1}{4}}_{q+1}\le C\ell^{-22}_q \lambda^{-1}_{q+1}.\label{F1-4-B}
\end{align}
With the help of Proposition \ref{guji1} and Proposition \ref{est-ak}, we have
\begin{align}
&\big\|\sum_{k\in\Lambda}\Div\big(a^2_{(k,q)}  \bar{k}\otimes \bar{k}\big)   \mathbb{P}_{\neq 0}( \phi^2_{k, \bar{k}, \bar{\bar{k}}, \sigma} )\big\|_{{L}^\infty_t H^4}+\big\|\sum_{k \in\Lambda }\nabla a^2_{(k,q)}\frac{\Div\Div}{\Delta}(   \phi^2_{k,\bar{k},\bar{\bar{k}},\sigma} \bar{k}\otimes\bar{k} )\big\|_{{L}^\infty_t H^4}\nonumber\\
&+\frac{1}{\mu}\big\|\sum_{k \in\Lambda }\partial_ta^2_{(k,q)} \PP ( \phi^2_{k,\bar{k},\bar{\bar{k}},\sigma} \bar{k} )\big\|_{{L}^\infty_t H^4}+
    \frac{1}{\mu}\big\|\sum_{k \in\Lambda }\nabla\big(\partial_ta^2_{(k,q)}\big)\frac{\Div}{\Delta}\PP ( \phi^2_{k,\bar{k},\bar{\bar{k}},\sigma} \bar{k} )\big\|_{{L}^\infty_t H^4}\nonumber\\
\lesssim&\|a^2_{(k,q)}  \|_{L^\infty_t W^{1,\infty}}(\|\phi^2_{k, \bar{k}, \bar{\bar{k}}, \sigma}\|_{L^\infty_t H^4}+1)+\|a^2_{(k,q)}  \|_{L^\infty_t W^{5,\infty}}(\|\phi^2_{k, \bar{k}, \bar{\bar{k}}, \sigma}\|_{L^\infty_t L^2}+1)\nonumber\\
&+\frac{1}{\mu}\|\partial_ta^2_{(k,q)}  \|_{L^\infty_t W^{1,\infty}}(\|\phi^2_{k, \bar{k}, \bar{\bar{k}}, \sigma}\|_{L^\infty_t H^4}+1)+\frac{1}{\mu}\|\partial_ta^2_{(k,q)}  \|_{L^\infty_t W^{5,\infty}}(\|\phi^2_{k, \bar{k}, \bar{\bar{k}}, \sigma}\|_{L^\infty_t L^2}+1)\nonumber\\
\lesssim& \ell^{-10}_q\lambda^{\frac{35}{8}}_{q+1}.\label{F1-2-W}
\end{align}
Therefore, collecting \eqref{F1-1-B} and \eqref{F1-2-B}--\eqref{F1-4-B} together yields that
\begin{align*}
    \|F^{(1)}_{q+1}\|_{\widetilde{L}^\infty_t B^{-\frac{3}{2}}_{2,1}}\le C\ell^{-22}_q \lambda^{-\frac{1}{32}}_{q+1}.
\end{align*}
By \eqref{F1-1-W} and \eqref{F1-2-W}, we conclude that
\begin{align*}
    \|F^{(1)}_{q+1}\|_{{L}^\infty_t H^4}\lesssim \ell^{-10}_q\lambda^{\frac{35}{8}}_{q+1}.
\end{align*}
Therefore, we show \eqref{est-Fq1} and complete the proof of Proposition \ref{def-F2}.
\end{proof}

\begin{proposition}[The decomposition of $\partial_t (\wpq+\wcq)$]\label{F1}Let
$\wpq$ and $\wcq$ be defined in \eqref{def-wpq} and \eqref{def-wcq} respectively, then there exist a trace-free matrix $\RR^{(2)}_{(q+1)}$, pressure $P^{(2)}_{(q+1)}$ and the error term $F^{(2)}_{q+1}$ such that
\begin{align}
    \partial_t (\wpq+\wcq)=\Div \RR^{(2)}_{q+1}+\nabla P^{(2)}_{(q+1)}+F^{(2)}_{q+1}\label{decom-wp+wc}
\end{align}
satisfying
\begin{align}
\|\RR^{(2)}_{q+1}\|_{L^\infty_t L^1}
\lesssim\ell^{-12}_q\lambda^{-\frac{1}{16}}_{q+1},\quad \|\RR^{(2)}_{q+1}\|_{L^\infty_t W^{5,1}}\lesssim \ell^{-4}_q\lambda^{\frac{79}{16}}_{q+1},\quad \spt_x \RR^{(2)}_{q+1}\subseteq\Omega_{q+1}. \label{est-Rq2}
\end{align}
and
\begin{align}
\|F^{(2)}_{q+1}\|_{\widetilde{L}^\infty_t B^{-\frac{3}{2}}_{2,1}} \lesssim  \ell^{-12}_q\lambda^{-\frac{9}{4}}_{q+1}, \quad  \|F^{(2)}_{q+1}\|_{{L}^\infty_t H^4} \le \ell^{-4}_q\lambda^{4}_{q+1},\label{est-Fq2}
\end{align}
\end{proposition}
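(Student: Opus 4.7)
The starting point is the factored form obtained already in the construction of the correctors,
\begin{align*}
\wpq + \wcq = N_{\Lambda}^{-1}\lambda_{q+1}^{-1}\sum_{k\in\Lambda}\Div\bigl(a_{(k,q)}\phi_{k,\bar{k},\bar{\bar{k}},\sigma}\Psi'_k(k\otimes\bar{k} - \bar{k}\otimes k)\bigr).
\end{align*}
Because $\partial_t$ commutes with the spatial divergence and only $a_{(k,q)}$ and $\phi_{k,\bar{k},\bar{\bar{k}},\sigma}$ depend on $t$, this reduces the proposition to decomposing
\begin{align*}
\partial_t(\wpq+\wcq) = N_{\Lambda}^{-1}\lambda_{q+1}^{-1}\sum_{k}\Div\Bigl(\bigl[(\partial_t a_{(k,q)})\phi_{k,\bar{k},\bar{\bar{k}},\sigma} + a_{(k,q)}\partial_t\phi_{k,\bar{k},\bar{\bar{k}},\sigma}\bigr]\Psi'_k(k\otimes\bar{k}-\bar{k}\otimes k)\Bigr).
\end{align*}
The central structural issue is that $k\otimes\bar{k}-\bar{k}\otimes k$ is antisymmetric whereas $\RR^{(2)}_{q+1}$ must be symmetric trace-free, so the high-frequency oscillation carried by $\Psi'_k$ at frequency $\lambda_{q+1}$ has to be transferred onto the matrix factor through an inverse-divergence expansion.

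My plan is to Fourier expand $\Psi'_k(x)=\sum_{l\neq 0}c_l e^{\ii 2\pi N_{\Lambda}\lambda_{q+1} l k\cdot x}$ and apply Lemma~\ref{tracefree} mode by mode (with $N$ large, e.g.\ $N=20$), exactly in the same spirit as in the proof of Proposition~\ref{def-F2}. This produces the target representation $\partial_t(\wpq+\wcq)=\Div\RR^{(2)}_{q+1}+\nabla P^{(2)}_{q+1}+F^{(2)}_{q+1}$ with $\RR^{(2)}_{q+1}$ symmetric trace-free, $\spt_x\RR^{(2)}_{q+1}\subseteq\spt_x a_{(k,q)}\subseteq\Omega_{q+1}$, and remainder $F^{(2)}_{q+1}$ inheriting the gain $\lambda_{q+1}^{-N}$. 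For the ``fast'' piece I use that only $\phi_{\bar{k},r,\sigma,\mu}$ depends on $t$ and that $k\perp\bar{k}\perp\bar{\bar{k}}$ kills the other factors, yielding the transport identity $\partial_t\phi_{k,\bar{k},\bar{\bar{k}},\sigma}=\mu\bar{k}\cdot\nabla\phi_{k,\bar{k},\bar{\bar{k}},\sigma}$, so that the time derivative behaves like a spatial derivative boosted by $\mu=\lambda_{q+1}^{5/4}$; the quantitative cost $\lambda_{q+1}^{33/16}$ can also be read directly from Proposition~\ref{guji1}.

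For the estimates I combine the amplitude bounds from Proposition~\ref{est-ak} (namely $\|a_{(k,q)}\|_{L^\infty_t H^N}\lesssim\ell_q^{-2N}$ and $\|\partial_t a_{(k,q)}\|_{L^\infty_t H^N}\lesssim\ell_q^{-6N}$) with Proposition~\ref{guji1} and Hölder's inequality, using $\spt_x a_{(k,q)}\subseteq[-\tfrac12,\tfrac12]^3$ to pass from $\R^3$-norms to $\TTT^3$-norms of the $\phi$-factors. The $L^\infty_t L^1$ bound for $\RR^{(2)}_{q+1}$ follows from the $\lambda_{q+1}^{-1}$ prefactor, the concentration gain $r^{3/2}=\lambda_{q+1}^{-9/8}$, and the cost $\lambda_{q+1}^{13/16}$ of one additional derivative landing on $\phi$, producing the target $\ell_q^{-12}\lambda_{q+1}^{-1/16}$; five further spatial derivatives give the $W^{5,1}$ estimate $\ell_q^{-4}\lambda_{q+1}^{79/16}$. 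For $F^{(2)}_{q+1}$, the $\widetilde{L}^\infty_t B^{-3/2}_{2,1}$ bound is deduced from the embedding $L^\infty_t L^{9/8}\hookrightarrow\widetilde{L}^\infty_t B^{-3/2}_{2,1}$ employed already in Proposition~\ref{def-F2}, the strong decay $\lambda_{q+1}^{-9/4}$ coming from iterating Lemma~\ref{tracefree} sufficiently many times; the $L^\infty_t H^4$ bound follows analogously.

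The main obstacle I foresee is taming the $\mu=\lambda_{q+1}^{5/4}$ amplification produced by the time derivative on $\phi_{k,\bar{k},\bar{\bar{k}},\sigma}$: the $\lambda_{q+1}^{-1}$ prefactor alone is insufficient, so one genuinely needs negative powers of $\lambda_{q+1}$ coming from the iterated tracefree decomposition and from the $r^{3/2}$ concentration in order to reach $\lambda_{q+1}^{-1/16}$ in $L^1$ for $\RR^{(2)}_{q+1}$ and the much stronger $\lambda_{q+1}^{-9/4}$ in $B^{-3/2}_{2,1}$ for $F^{(2)}_{q+1}$. A subsidiary bookkeeping point is that the symmetrization step must not destroy compact support; this is automatic because every term in the decomposition carries $a_{(k,q)}$ as amplitude, and $a_{(k,q)}$ vanishes outside $\Omega_{q+1}$ by construction of $\eta_q$.
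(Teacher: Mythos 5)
Your proposal is correct and follows essentially the same route as the paper: Fourier-expand $\Psi'_k$ at frequency $N_\Lambda\lambda_{q+1}$, apply Lemma~\ref{tracefree} with $N=20$ to transfer the oscillation onto a symmetric trace-free stress, and close the estimates via Propositions~\ref{est-ak} and \ref{guji1}, with the support claim inherited from $a_{(k,q)}$. The only (cosmetic) difference is that the paper first splits $k\otimes\bar k-\bar k\otimes k$ into the symmetric trace-free part $k\otimes\bar k+\bar k\otimes k$, which is kept directly inside $\RR^{(2)}_{q+1}$ without invoking the lemma, and sends only the excess $-2\,\bar k\otimes k$ (rewritten as $\Psi'_k\,\Div(\partial_t(a_{(k,q)}\phi_{k,\bar k,\bar{\bar k},\sigma})\bar k\otimes k)$ using $\bar k\cdot\nabla\Psi'_k=0$) through the inverse-divergence iteration.
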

\begin{proof}Thanks to \eqref{def-wpq} and \eqref{def-wcq}, we have
\begin{align}\label{wpq+wcq}
\wpq+\wcq=\sum_{k\in\Lambda}N^{-1}_{\Lambda}\lambda^{-1}_{q+1}{\Div\big( a_{(k,q)} \phi_{k,\bar{k},\bar{\bar{k}},\sigma}\Psi'_k(k\otimes \bar{k}-\bar{k}\otimes k)\big)}.
    \end{align}
and
\begin{align}
\partial_t(\wpq+\wcq)=&\sum_{k\in\Lambda}N^{-1}_{\Lambda}\lambda^{-1}_{q+1}{\Div\big(\partial_t(a_{(k,q)} \phi_{k,\bar{k},\bar{\bar{k}},\sigma})\Psi'_k(k\otimes \bar{k}-\bar{k}\otimes k)\big)}\nonumber\\
=&\sum_{k\in\Lambda}N^{-1}_{\Lambda}\lambda^{-1}_{q+1}{\Div\big(\partial_t(a_{(k,q)} \phi_{k,\bar{k},\bar{\bar{k}},\sigma})\Psi'_k(k\otimes \bar{k}+\bar{k}\otimes k)\big)}\nonumber\\
&-2\sum_{k\in\Lambda}N^{-1}_{\Lambda}\lambda^{-1}_{q+1}{\Div\big(\partial_t(a_{(k,q)} \phi_{k,\bar{k},\bar{\bar{k}},\sigma})\Psi'_k\bar{k}\otimes k\big)}.\label{ptwpq+wcq}
\end{align}
For the second term in the right side of \eqref{ptwpq+wcq}, we rewrite it as
\begin{align*}
    &-2\sum_{k\in\Lambda}N^{-1}_{\Lambda}\lambda^{-1}_{q+1}{\Div\big(\partial_t(a_{(k,q)} \phi_{k,\bar{k},\bar{\bar{k}},\sigma})\Psi'_k\bar{k}\otimes k\big)}\\
   =& -2\sum_{k\in\Lambda}N^{-1}_{\Lambda}\lambda^{-1}_{q+1}{\Div\big(\partial_t(a_{(k,q)} \phi_{k,\bar{k},\bar{\bar{k}},\sigma})\bar{k}\otimes k\big)}\Psi'_k\\
   =&-2\sum_{k\in\Lambda}N^{-1}_{\Lambda}\lambda^{-1}_{q+1}{\Div\big(\partial_t(a_{(k,q)} \phi_{k,\bar{k},\bar{\bar{k}},\sigma})\bar{k}\otimes k\big)}\sum_{l\neq 0}c_{l}e^{\ii 2\pi N_{\Lambda}\lambda_{q+1}lk\cdot x}.
\end{align*}
With the aid of Proposition \ref{tracefree}, there exist a  traceless symmetric stress $\mathring{R}_{(M)}$, pressure $P_{(M)}$ and error term $E_{(M)}$ such that
\begin{align}\label{line-eq}
-2\sum_{k\in\Lambda}N^{-1}_{\Lambda}\lambda^{-1}_{q+1}{\Div\big(\partial_t(a_{(k,q)} \phi_{k,\bar{k},\bar{\bar{k}},\sigma})\Psi'_k\bar{k}\otimes k\big)}=\Div \RR_{(M)}+\nabla P_{(M)}+E_{(M)},
\end{align}
where $\spt_x \RR_{(M)}=\spt_x a_{(k,q)}\subseteq\spt \eta_q=\Omega_{q+1}$,
\begin{align}
    &\RR^{ij}_{(M)}=-2\sum_{k\in\Lambda}\sum_{l\neq 0}\sum_{n=1}^{M} c_{l}l^{-n}(N_{\Lambda}\lambda_{q+1})^{-(n+1)}A^{(n)}_{ij\alpha_n}(G)\big(\frac{\partial^{\alpha_n}}{{\Delta}^n}e^{\ii k\cdot x}\big)\circ{(N_{\Lambda}l\lambda_{q+1} x)},\notag\\
     &P_{(M)}=-2\sum_{k\in\Lambda}\sum_{l\neq 0}\sum_{n=1}^{M}c_{l}l^{-n}(N_{\Lambda}\lambda_{q+1})^{-(M+1)}B^{(n)}_{\alpha_n}(G)\big(\frac{\partial^{\alpha_n}}{{\Delta}^n}e^{\ii k\cdot x}\big)\circ{(N_{\Lambda}l\lambda_{q+1} x)},\notag\\
    &E^i_{(M)}=-2\sum_{k\in\Lambda}\sum_{l\neq 0}c_{l}l^{-M}(N_{\Lambda}\lambda_{q+1})^{-(M+1)}C^{(M)}_{i\alpha}(G) \big(\frac{\partial^{\alpha}}{{\Delta}^n}e^{\ii k\cdot x}\big)\circ{(N_{\Lambda}l\lambda_{q+1} x)},\label{def-EM}\\
    &G=\Div\big(\partial_t(a_{(k,q)} \phi_{k,\bar{k},\bar{\bar{k}},\sigma})\bar{k}\otimes k\big).\notag
\end{align}
Plugging \eqref{line-eq} into \eqref{ptwpq+wcq} and  setting $M=20$, we obtain that
\begin{align*}
&\partial_t(\wpq+\wcq)\\
=&\Div\Big(\big(\sum_{k\in\Lambda}N^{-1}_{\Lambda}\lambda^{-1}_{q+1}{\partial_t(a_{(k,q)} \phi_{k,\bar{k},\bar{\bar{k}},\sigma})\Psi'_k(k\otimes \bar{k}+\bar{k}\otimes k)\big)+\RR_{(M)}\Big)}+\nabla P_{(M)}+E_{(M)}\\
=:&\Div \RR^{(2)}_{q+1}+\nabla P^{(2)}_{(q+1)}+F^{(2)}_{q+1}.
\end{align*}
Hence we  have the decomposition \eqref{decom-wp+wc} and $\spt_x \RR^{(2)}_{q+1}\subseteq\Omega_{q+1}$. Now we estimate $\RR^{(2)}_{q+1}$. By Proposition \ref{guji1} and Proposition \ref{est-ak}, one infers that
\begin{align*}
\|\RR^{(2)}_{q+1}\|_{L^\infty_t L^1}\le &\sum_{k\in\Lambda}\big\|N^{-1}_{\Lambda}\lambda^{-1}_{q+1}\partial_t(a_{(k,q)} \phi_{k,\bar{k},\bar{\bar{k}},\sigma})\Psi'_k\big\|_{L^\infty_t L^1}+\big\|\RR_{(M)}\big\|_{L^\infty_t L^1}\\
\lesssim&\sum_{k\in\Lambda}\lambda^{-1}_{q+1}\|\partial_t(a_{(k,q)} \phi_{k,\bar{k},\bar{\bar{k}},\sigma})\|_{L^\infty_t L^1}+\sum_{k\in\Lambda}\sum_{n=1}^{20}\lambda^{-(n+1)}_{{q+1}}\|\partial_t(a_{(k,q)} \phi_{k,\bar{k},\bar{\bar{k}},\sigma})\|_{L^\infty_t W^{n,1}}\\
\lesssim&\ell^{-12}_q\lambda^{-1}_{q+1}\lambda^{\frac{33}{16}-\frac{9}{8}}_{q+1}+\sum_{k\in\Lambda}\sum_{n=1}^{20}\ell^{-12}_q\lambda^{-(n+1)}_{{q+1}}\lambda^{\frac{13}{16}n+\frac{33}{16}-\frac{9}{8}}_{q+1}\\
\lesssim&\ell^{-12}_q\lambda^{-\frac{1}{16}}_{q+1},
\end{align*}
and
\begin{align*}
\|\RR^{(2)}_{q+1}\|_{L^\infty_t W^{5,1}}\le &\sum_{k\in\Lambda}\big\|N^{-1}_{\Lambda}\lambda^{-1}_{q+1}\partial_t(a_{(k,q)} \phi_{k,\bar{k},\bar{\bar{k}},\sigma})\Psi'_k\big\|_{L^\infty_t W^{5,1}}+\big\|\RR_{(M)}\big\|_{L^\infty_t W^{5,1}}\\
\lesssim&\sum_{k\in\Lambda}\lambda^{-1}_{q+1}\|\partial_t(a_{(k,q)} \phi_{k,\bar{k},\bar{\bar{k}},\sigma})\|_{L^\infty_t W^{5,1}}+\sum_{k\in\Lambda}\lambda^{4}_{q+1}\|\partial_t(a_{(k,q)} \phi_{k,\bar{k},\bar{\bar{k}},\sigma})\|_{L^\infty_t L^1}\\
&+\sum_{k\in\Lambda}\sum_{n=1}^{20}\lambda^{-(n+1)}_{{q+1}}(\|\partial_t(a_{(k,q)} \phi_{k,\bar{k},\bar{\bar{k}},\sigma})\|_{L^\infty_t W^{n+5,1}}+\lambda^5_{q+1}\|\partial_t(a_{(k,q)} \phi_{k,\bar{k},\bar{\bar{k}},\sigma})\|_{L^\infty_t W^{n,1}})\\
\lesssim&\ell^{-4}_q\lambda^{-1}_{q+1}\lambda^{\frac{65}{16}+\frac{33}{16}-\frac 98}_{q+1}+\ell^{-4}_q\lambda^4_{q+1}\lambda^{\frac{33}{16}-\frac 98}_{q+1}+\sum_{n=1}^{20}\ell^{-4}_q\lambda^{-(n+1)}_{{q+1}}\lambda^{5}_{q+1}\lambda^{\frac{13}{16}n+\frac{33}{16}-\frac 98}_{q+1}\\
\lesssim&\ell^{-4}_q\lambda^{\frac{79}{16}}_{q+1}.
\end{align*}
Therefore, we prove \eqref{est-Rq2}. With regard to $F^{(2)}_{q+1}$, by  Proposition \ref{est-ak} and Proposition \ref{guji1}, for $M=20$, we deduce from \eqref{def-EM} that
\begin{align*}
\|F^{(2)}_{q+1}\|_{\widetilde{L}^\infty_t B^{-\frac{3}{2}}_{2,1}} \lesssim &\|F^{(2)}_{q+1}\|_{{L}^\infty_t L^{\frac32}} \lesssim\lambda^{-(M+1)}_{q+1}\|\partial_t(a_{(k,q)} \phi_{k,\bar{k},\bar{\bar{k}},\sigma})\|_{L^\infty W^{M+1,\frac32}}\\
\lesssim &\ell^{-12}_q\lambda^{-(M+1)}_{q+1}\lambda^{\frac{13}{16}(M+1)+\frac{27}{16}}_{q+1}\lesssim \ell^{-12}_q\lambda^{-\frac{3}{16}(M+1)+\frac{27}{16}}_{q+1}\le \ell^{-12}_q\lambda^{-\frac{9}{4}}_{q+1},
\end{align*}
and
\begin{align*}
\|F^{(2)}_{q+1}\|_{{L}^\infty H^4} \lesssim &\lambda^{-(M+1)}_{q+1}(\|\partial_t(a_{(k,q)} \phi_{k,\bar{k},\bar{\bar{k}},\sigma})\|_{L^\infty H^{M+6}}+\lambda^5_{q+1}\|\partial_t(a_{(k,q)} \phi_{k,\bar{k},\bar{\bar{k}},\sigma})\|_{L^\infty H^{M+1}})\\
\lesssim &\ell^{-4}_q\lambda^{\frac{113}{16}-\frac{3}{16}(M+1)}_{q+1}\le \ell^{-4}_q\lambda^{4}_{q+1},
\end{align*}
which show \eqref{est-Fq2}. Hence, we complete the proof of Proposition \ref{F1}.
\end{proof}

Now we give the localized corrector $\wtq$ in more details. Let $\wtq$ be the solution of the equations \eqref{e:wt}, where  $F_{q+1}$ is given by
\begin{align}
    F_{q+1}:=&-F^{(1)}_{q+1}-F^{(2)}_{q+1}-\Div \RR^{rem}_q-\partial_t\wttqc. \label{def-F_{q+1}}
\end{align}
Then we define the perturbation $w_{q+1}$ by
\[w_{q+1}=\wpq+\wcq+\wttq+\wttqc+\wtq.\]
\subsubsection{Estimates for the perturbation} In this section, we aim to estimate $\wpq$. Firstly, we shall use the estimate of $F_{q+1}$ to obtain the bound of $\wtq$.
 \begin{proposition}[Estimates for $F_{q+1}$]\label{est-Fq}Let $F_{q+1}$ be defined in \eqref{def-F_{q+1}}. Then we have
 \begin{align*}
     \|F_{q+1}\|_{L^\infty_t H^4}\le \lambda^{5}_{q+1}, \qquad  \|F_{q+1}\big\|_{\widetilde L^{\infty}_tB^{-3/2}_{2,1} }\lesssim \lambda^{-40}_{q}.
 \end{align*}
   \end{proposition}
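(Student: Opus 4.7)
The plan is to estimate each of the four summands in
\[F_{q+1} = -F^{(1)}_{q+1} - F^{(2)}_{q+1} - \Div \RR^{rem}_q - \partial_t \wttqc\]
separately in both $L^\infty_t H^4$ and $\widetilde L^\infty_t B^{-3/2}_{2,1}$, then to sum them and check that the parameter choices $\ell_q=\lambda_q^{-60}$, $\lambda_{q+1}=\lambda_q^b$ with $b=2^{15}$ make each contribution dominated by $\lambda_{q+1}^{5}$ in $H^4$ and by $\lambda_q^{-40}$ in the Besov norm.

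For $F^{(1)}_{q+1}$ and $F^{(2)}_{q+1}$, the bounds are supplied directly by Propositions~\ref{def-F2} and~\ref{F1}: substituting $\ell_q=\lambda_q^{-60}$ and $\lambda_{q+1}=\lambda_q^b$ turns the Besov bounds $\ell_q^{-22}\lambda_{q+1}^{-1/32}$ and $\ell_q^{-12}\lambda_{q+1}^{-9/4}$ into negative powers of $\lambda_q$ once $b$ is sufficiently large relative to the $\ell_q^{-1}$-exponents, and the $H^4$ bounds $\ell_q^{-10}\lambda_{q+1}^{35/8}$ and $\ell_q^{-4}\lambda_{q+1}^{4}$ are already strict powers of $\lambda_{q+1}$ strictly below $5$. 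For $\Div\RR^{rem}_q$, Proposition~\ref{p:estimates-for-mollified} (taking $N=0$) gives $\|\RR^{rem}_q\|_{L^\infty_t W^{4,1}}\lesssim\lambda_q^{20}\ell_q$; the local-local contribution has support in $\Omega_q+[-\lambda_q^{-1},\lambda_q^{-1}]^3$, so $L^1$- and $L^2$-based norms are comparable up to a bounded geometric factor, while the non-local contribution is controlled by the classical Constantin--E--Titi mollifier commutator estimate using~\eqref{e:vq-C0}--\eqref{e:vq-H5}. In either case one picks up a factor of $\ell_q$ that beats the polynomial blow-up coming from the derivatives in $\Div$, so this term is harmless in both norms.

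For $\partial_t\wttqc$, I would differentiate~\eqref{wttqc} and split into two terms, one with $\partial_t$ landing on the amplitude $a_{(k,q)}^{2}$ and one with $\partial_t$ landing on the intermittent profile $\sigma^{-1}G(\sigma(x-x_k),\sigma t)$. Since $G$ is obtained from $\phi^2_{k,\bar k,\bar{\bar k}}\bar k$ by an antidivergence, Proposition~\ref{guji1} applies with one fewer derivative, while Proposition~\ref{est-ak} controls the amplitude factors. The prefactor $\mu^{-1}=\lambda_{q+1}^{-5/4}$ provides the decisive smallness; the $\partial_t G$ piece is the slightly worse of the two since it reintroduces a factor of $\sigma\mu$ which is only partially damped by $\sigma^{-1}\mu^{-1}$.

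The main obstacle is bookkeeping rather than any analytical subtlety: every contribution is a rational power of $\lambda_{q+1}$ inherited from the intermittent scales $r=\lambda_{q+1}^{-3/4}$, $\mu=\lambda_{q+1}^{5/4}$, $\sigma=\lambda_{q+1}^{1/16}$, weighted against $\ell_q^{-k}=\lambda_q^{60k}$ blow-ups from the amplitudes. The tightest constraint comes from the Besov estimate for $F^{(1)}_{q+1}$, which dictates how large $b$ must be taken relative to the exponent $60\cdot 22$ appearing in $\ell_q^{-22}$; once this inequality is verified, all other pieces fit comfortably under the stated thresholds and the two claimed inequalities follow.
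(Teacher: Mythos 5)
Your overall decomposition of $F_{q+1}$ into the four summands, the direct citation of Propositions \ref{def-F2} and \ref{F1} for $F^{(1)}_{q+1}$ and $F^{(2)}_{q+1}$, and the treatment of $\Div\Rem$ via the mollification estimates all match the paper's proof. The gap is in $\partial_t\wttqc$, specifically in the piece where $\partial_t$ falls on the profile. You write that "the prefactor $\mu^{-1}$ provides the decisive smallness" and that the $\partial_t G$ piece "reintroduces a factor of $\sigma\mu$ which is only partially damped by $\sigma^{-1}\mu^{-1}$." In fact the damping is exact, not partial: by \eqref{div-pt-phi} one has $\partial_t G=\mu(-\Delta)^{-1}\curl\Ph\Div(\phi^2_{k,\bar{k},\bar{\bar{k}}}\,\bar{k}\otimes\bar{k})$, and the chain rule produces an extra $\sigma$, so after multiplying by the prefactor $\mu^{-1}\sigma^{-1}$ this contribution becomes $\sum_{k}\nabla a^2_{(k,q)}\times\bigl(\tfrac{\curl\Ph\Div}{\Delta}(\phi^2_{k,\bar{k},\bar{\bar{k}}}\,\bar{k}\otimes\bar{k})\bigr)(\sigma(x-x_k),\sigma t)$ with no residual power of $\mu$ or $\sigma$ at all. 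Since $\tfrac{\curl\Ph\Div}{\Delta}$ is order zero and $\|\phi^2_{k,\bar{k},\bar{\bar{k}}}\|_{L^2(\TTT^3)}\sim r^{-3/2}$ is large, a crude $L^2$- or $L^{3/2}$-based bound does not close, and your sketch supplies no alternative source of smallness. The paper's argument rewrites the profile as $\sigma^{-1}\partial_j(\cdots)$ via $\sum_j\partial_j\partial_j/\Delta$ to gain a factor $\sigma^{-1}=\lambda_{q+1}^{-1/16}$, and measures the profile in $L^{1+\varepsilon}(\TTT^3)$ (where it costs only $\lambda_{q+1}^{9\varepsilon/(4(1+\varepsilon))}$) through the embedding $L^{\frac{3(1+\varepsilon)}{2-\varepsilon}}\hookrightarrow B^{-1/2}_{2,1}$, choosing $\varepsilon$ small so that the net exponent is negative. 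This concentration-versus-oscillation step is the genuinely delicate point of the proposition and is absent from your proposal; for the $H^4$ bound the crude estimate does suffice, since the target $\lambda_{q+1}^{5}$ is generous.

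A smaller point: in the final Besov sum the term that actually saturates the bound $\lambda_q^{-40}$ is $\|\Rem\|_{L^\infty_t L^2}\lesssim\lambda_q^{20}\ell_q=\lambda_q^{-40}$, not the $F^{(1)}_{q+1}$ contribution; the latter only imposes a lower bound on $b$ and is then negligible. Your identification of $F^{(1)}_{q+1}$ as "the tightest constraint" conflates these two roles, though it does not affect the structure of the argument.
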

   \begin{proof}From the definition of $\wttqc$ in \eqref{wttqc} and \eqref{div-pt-phi}, one infers that
 \begin{align}
 \partial_t\wttqc=& -\frac{1}{\mu}\sum_{k\in\Lambda}\nabla{a}^2_{(k,q)}\times  (\partial_tG)(\sigma (x-x_k), \sigma t)-\frac{1}{\mu}\sum_{k\in\Lambda} \partial_t\nabla{a}^2_{(k,q)}\times \sigma^{-1}G(\sigma (x-x_k), \sigma t)\notag\\
 =&\sum_{k\in\Lambda}\nabla{a}^2_{(k,q)}\times\Big(\frac{\curl \Ph \Div}{\Delta}(\phi^2_{k,\bar{k},\bar{\bar{k}}}\bar{k}\otimes \bar{k})\Big)(\sigma (x-x_k), \sigma t)\notag\\
 &-\frac{1}{\mu}\sum_{k\in\Lambda} \partial_t\nabla{a}^2_{(k,q)}\times \sigma^{-1}G(\sigma (x-x_k), \sigma t).\label{pt-wtc}
\end{align}
Utilizing Proposition \ref{est-ak} and Proposition \ref{guji1}, we see
\begin{align*}
&\|\partial_t\wttqc\|_{L^\infty_t H^4}\\
\le& \Big\|\sum_{k\in\Lambda}\nabla{a}^2_{(k,q)}\times\Big(\frac{\curl \Ph \Div}{\Delta}(\phi^2_{k,\bar{k},\bar{\bar{k}}}\bar{k}\otimes \bar{k})\Big)(\sigma (x-x_k), \sigma t)\Big\|_{L^\infty_t H^4}\\
&+\Big\|\frac{1}{\mu}\sum_{k\in\Lambda} \partial_t\nabla{a}^2_{(k,q)}\times \sigma^{-1}G(\sigma (x-x_k), \sigma t)\Big\|_{L^\infty_t H^4}\\
\lesssim&\sum_{k\in\Lambda} (\|\nabla{a}^2_{(k,q)}\|_{L^\infty_t W^{4,\infty}}\|\phi^2_{k,\bar{k},\bar{\bar{k}}}\|_{L^\infty_t L^2(\TTT^3)}+\|\nabla{a}^2_{(k,q)}\|_{L^\infty_t L^{\infty}}\|\phi^2_{k,\bar{k},\bar{\bar{k}}}\|_{L^\infty_t H^4(\TTT^3)})\\
&+\sum_{k\in\Lambda}\frac{1}{\mu\sigma}(\| \partial_t\nabla{a}^2_{(k,q)}\|_{L^\infty_t L^\infty}\|\phi^2_{k,\bar{k},\bar{\bar{k}}}\|_{L^\infty_t H^3(\TTT^3)}+\| \partial_t\nabla{a}^2_{(k,q)}\|_{L^\infty_t W^{4,\infty}}\|\phi^2_{k,\bar{k},\bar{\bar{k}}}\|_{L^\infty_t L^2(\TTT^3)})\\
\lesssim& \ell^{-18}_q\lambda^{\frac{35}{8}}_{q+1}.
\end{align*}
By \eqref{e:R_rem}, one has
\begin{align*}
\|\Div\Rem\|_{L^\infty_t H^4}\lesssim\lambda^{20}_q\ell^{-2}_q.
\end{align*}
The above two inequalities together with \eqref{est-Fq1} and \eqref{est-Fq2}  yields that
\begin{align}
\|F_{q+1}\|_{L^\infty_t H^4}
&\lesssim\ell^{-4}_q\lambda^{4}_{q+1}+\ell^{-18}_q\lambda^{\frac{35}{8}}_{q+1}+\lambda^{20}_q\ell^{-2}_q \le\lambda^{5}_{q+1}.\notag
\end{align}

To estimate $\|F_{q+1}\|_{{\widetilde{L}^\infty_t B^{-\frac 32}_{2,1}}}$, we estimate $\|\partial_t\wttqc\|_{\widetilde{L}^\infty_t B^{-\frac 32}_{2,1}}$ firstly.  By $L^\infty_t L^{\frac 32}(\R^3)\hookrightarrow \widetilde{L}^\infty B^{-\frac 32}_{2,1}(\R^3)$, we infer from \eqref{pt-wtc} that
\begin{align}
&\Big\|\frac{1}{\mu}\sum_{k\in\Lambda} \partial_t\nabla{a}^2_{(k,q)}\times \sigma^{-1}G(\sigma (x-x_k), \sigma t)\Big\|_{\widetilde{L}^\infty_t B^{-\frac 32}_{2,1}}\notag\\
\lesssim& \sum_{k\in\Lambda}\mu^{-1}\sigma^{-1}\| \partial_t\nabla{a}^2_{(k,q)}\|_{L^\infty_{t,x}}\|\phi^2_{k,\bar{k},\bar{\bar{k}}}\|_{L^\infty_t L^{\frac 32}(\TTT^3)}\lesssim \ell^{-12}_q\lambda^{-\frac{9}{16}}_{q+1},\label{est-ptwttqc-1}
\end{align}
Moreover, owning to
\begin{align*}
&\big(\frac{\curl \Ph \Div}{\Delta}(\phi^2_{k,\bar{k},\bar{\bar{k}}}\bar{k}\otimes \bar{k})\big)(\sigma (x-x_k), \sigma t)\\
=&\sum_{j=1}^3\big(\frac{\partial_j\partial_j}{\Delta}\frac{\curl \Ph \Div}{\Delta}(\phi^2_{k,\bar{k},\bar{\bar{k}}}\bar{k}\otimes \bar{k})\big)(\sigma (x-x_k), \sigma t)\\
=&\sum_{j=1}^3\sigma^{-1}\partial_j\Big(\big(\frac{ \partial_j }{\Delta}\frac{\curl \Ph \Div}{\Delta}(\phi^2_{k,\bar{k},\bar{\bar{k}}}\bar{k}\otimes \bar{k})\big)(\sigma (x-x_k), \sigma t)\Big),
\end{align*}
 one has
\begin{align*}
&\sum_{k\in\Lambda}\nabla{a}^2_{(k,q)}\times\Big(\frac{\curl \Ph \Div}{\Delta}(\phi^2_{k,\bar{k},\bar{\bar{k}}}\bar{k}\otimes \bar{k})\Big)(\sigma (x-x_k), \sigma t)\\
=&\sum_{k\in\Lambda}\sum_{j=1}^3\sigma^{-1}\partial_j  \Big(\nabla{a}^2_{(k,q)}\times \big(\frac{\partial_j   }{\Delta}\frac{\curl \Ph \Div}{\Delta}(\phi^2_{k,\bar{k},\bar{\bar{k}}}\bar{k}\otimes \bar{k})\big)(\sigma (x-x_k), \sigma t)\Big)\\
&-\sum_{k\in\Lambda}\sum_{j=1}^3\sigma^{-1}\partial_j \nabla{a}^2_{(k,q)}\times \big(\frac{\partial_j   }{\Delta}\frac{\curl \Ph \Div}{\Delta}(\phi^2_{k,\bar{k},\bar{\bar{k}}}\bar{k}\otimes \bar{k})\big)(\sigma (x-x_k), \sigma t).
\end{align*}
Hence, by $L^\infty_t L^{\frac {3(1+\varepsilon)}{2-\varepsilon}}(\R^3)\hookrightarrow \widetilde{L}^\infty_t B^{-\frac 12}_{2,1}(\R^3)$ for any $0<\varepsilon<1$, we obtain that
\begin{align}
&\Big\|\sum_{k\in\Lambda}\sum_{j=1}^3\sigma^{-1}\partial_j  \Big(\nabla{a}^2_{(k,q)}\times \big(\frac{\partial_j   }{\Delta}\frac{\curl \Ph \Div}{\Delta}(\phi^2_{k,\bar{k},\bar{\bar{k}}}\bar{k}\otimes \bar{k})\big)(\sigma (x-x_k), \sigma t)\Big)\Big\|_{\widetilde{L}^\infty_t B^{-\frac 32}_{2,1}}\notag\\
\lesssim&\sum_{k\in\Lambda}\sum_{j=1}^3\sigma^{-1}\big\|  \nabla{a}^2_{(k,q)}\times \big(\frac{\partial_j   }{\Delta}\frac{\curl \Ph \Div}{\Delta}(\phi^2_{k,\bar{k},\bar{\bar{k}}}\bar{k}\otimes \bar{k})\big)(\sigma (x-x_k), \sigma t)\big\|_{L^\infty_t L^{\frac {3(1+\varepsilon)}{2-\varepsilon}}}\notag\\
\lesssim&\sum_{k\in\Lambda}\sum_{j=1}^3g\sigma^{-1}\big\|  \nabla{a}^2_{(k,q)}\|_{L^\infty_{t,x}}\Big\|\frac{\partial_j   }{\Delta}\frac{\curl \Ph \Div}{\Delta}(\phi^2_{k,\bar{k},\bar{\bar{k}}}\bar{k}\otimes \bar{k})\Big\|_{L^\infty_t L^{1+\varepsilon}(\TTT^3)}.\notag
\end{align}
Using $L^\infty_t W^{1, 1+\varepsilon}(\TTT^3)\hookrightarrow L^\infty_t L^{\frac {3(1+\varepsilon)}{2-\varepsilon}}(\TTT^3)$, we have
\begin{align}
&\Big\|\sum_{k\in\Lambda}\sum_{j=1}^3\sigma^{-1}\partial_j  \Big(\nabla{a}^2_{(k,q)}\times \big(\frac{\partial_j   }{\Delta}\frac{\curl \Ph \Div}{\Delta}(\phi^2_{k,\bar{k},\bar{\bar{k}}}\bar{k}\otimes \bar{k})\big)(\sigma (x-x_k), \sigma t)\Big)\Big\|_{\widetilde{L}^\infty_t B^{-\frac 32}_{2,1}}\notag\\
\lesssim&\sum_{k\in\Lambda}\sigma^{-1}\big\|  \nabla{a}^2_{(k,q)}\|_{L^\infty_{t,x}}\|\phi^2_{k,\bar{k},\bar{\bar{k}}}\|_{L^\infty_t L^{1+\varepsilon}(\TTT^3)} \lesssim \ell^{-10}_q\lambda^{\frac{9\varepsilon}{4(1+\varepsilon)}-\frac{1}{16}}_{q+1} \lesssim \ell^{-10}_q\lambda^{-\frac{1}{32}}_{q+1},\label{est-ptwttqc-2}
\end{align}
where the last inequality holds for choosing $\varepsilon=\frac{1}{71}$. Using $L^\infty_t L^{1+\varepsilon}(\R^3)\hookrightarrow \widetilde{L}^\infty_t B^{-\frac 12}_{2,1}(\R^3)$ for $\varepsilon=\frac{1}{71}$, one deduces that
\begin{align}
&\Big\|\sum_{k\in\Lambda}\sum_{j=1}^3\sigma^{-1}\partial_j \nabla{a}^2_{(k,q)}\times \big(\frac{\partial_j   }{\Delta}\frac{\curl \Ph \Div}{\Delta}(\phi^2_{k,\bar{k},\bar{\bar{k}}}\bar{k}\otimes \bar{k})\big)(\sigma (x-x_k), \sigma t)\Big\|_{\widetilde{L}^\infty_t B^{-\frac 32}_{2,1}}\notag\\
\lesssim&\Big\|\sum_{k\in\Lambda}\sum_{j=1}^3\sigma^{-1}\partial_j \nabla{a}^2_{(k,q)}\times \big(\frac{\partial_j   }{\Delta}\frac{\curl \Ph \Div}{\Delta}(\phi^2_{k,\bar{k},\bar{\bar{k}}}\bar{k}\otimes \bar{k})\big)(\sigma (x-x_k), \sigma t)\Big\|_{_{L^\infty_t L^{1+\varepsilon}} }\notag\\
\lesssim&\sum_{k\in\Lambda}\sum_{j=1}^3\sigma^{-1}\big\| \partial_j  \nabla{a}^2_{(k,q)}\|_{L^\infty_{t,x}}\|\phi^2_{k,\bar{k},\bar{\bar{k}}}\|_{L^\infty_t L^{1+\varepsilon}(\TTT^3)}  \lesssim \ell^{-12}_q\lambda^{\frac{9\varepsilon}{4(1+\varepsilon)}-\frac{1}{16}}_{q+1} \lesssim \ell^{-12}_q\lambda^{-\frac{1}{32}}_{q+1}.\label{est-ptwttqc-3}
\end{align}
Collecting \eqref{est-ptwttqc-1}--\eqref{est-ptwttqc-3} together implies that
\begin{align*}
\|\partial_t\wttqc\|_{\widetilde{L}^\infty_t B^{-\frac 32}_{2,1}}\lesssim\ell^{-12}_q\lambda^{-\frac{1}{32}}_{q+1}.
\end{align*}
Moreover, combining this inequality with \eqref{e:R_rem}, \eqref{est-Fq1} and \eqref{est-Fq2}, we conclude that
\begin{align*}
\|F_{q+1}\big\|_{\widetilde L^{\infty}_tB^{-3/2}_{2,1} }\le &\|F^{(1)}_{q+1}\big\|_{\widetilde L^{\infty}_tB^{-3/2}_{2,1} }+\|F^{(2)}_{q+1}\big\|_{\widetilde L^{\infty}_tB^{-3/2}_{2,1} }+\|\Rem\|_{ L^{\infty}_tL^2}+\|\partial_t\wttqc\|_{\widetilde{L}^\infty B^{-\frac 32}_{2,1}}\\
\lesssim &\ell^{-12}_q\lambda^{-\frac{3}{16}}_{q+1}+\ell^{-22}_q \lambda^{-\frac{1}{32}}_{q+1}+\lambda^{20}_q\ell_q+\ell^{-12}_q\lambda^{-\frac{1}{32}}_{q+1}
\lesssim \lambda^{-40}_{q}.
\end{align*}
Therefore we prove Proposition \ref{est-Fq}.
   \end{proof}
Based on the estimate of $F_{q+1}$, we are in position to estimate $\wtq$.
   \begin{proposition}[Estimates for $\wtq$]\label{wtq-H3}Let $\wtq$ be the solution of the equations \eqref{e:wt}, then we have
\begin{align}
&\|\wtq\|_{\widetilde L^{\infty}_tB^{1/2}_{2,1} }\le \lambda^{-20}_{q},\quad
\|\wtq\|_{L^{\infty}_t H^5}\lesssim\lambda^{5}_{q+1} . \label{estimate-wt}
\end{align}
   \end{proposition}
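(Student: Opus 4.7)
The plan is to treat \eqref{e:wt} as a forced, incompressible Navier-Stokes system with zero initial data, external force $F_{q+1}$, and an additional linear-in-$\wtq$ drift by the solenoidal field $\ulqnl$, and to prove the two estimates in sequence. First I would establish the scaling-critical Chemin-Lerner bound $\|\wtq\|_{\widetilde L^\infty_t B^{1/2}_{2,1}}\le \lambda_q^{-20}$ via a Picard fixed-point argument, and then bootstrap to the subcritical estimate $\|\wtq\|_{L^\infty_t H^5}\lesssim \lambda_{q+1}^5$ through a parabolic energy argument that exploits the smallness obtained in the first step.

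For the critical estimate, I would work in the scale-invariant Chemin-Lerner space
\[
X := \widetilde L^\infty_t B^{1/2}_{2,1}(\R^3) \cap \widetilde L^1_t B^{5/2}_{2,1}(\R^3),
\]
which is natural for three-dimensional Navier-Stokes small-data theory. Writing \eqref{e:wt} in Duhamel form, heat-semigroup maximal regularity in Chemin-Lerner spaces combined with the algebra property of $B^{3/2}_{2,1}(\R^3)$ and Bony's paraproduct decomposition yields an a priori inequality of the form
\[
\|\wtq\|_X \le C\bigl(\mathcal N(F_{q+1}) + \|\wtq\|_X^2 + \|\wtq\|_X\|\ulqnl\|_X\bigr),
\]
where $\mathcal N(F_{q+1})$ is the appropriate norm of the forcing, obtained by exploiting the divergence or time-derivative structure of each constituent of $F_{q+1}$ (the explicit $\Div\Rem$ and $\partial_t\wttqc$ pieces, together with the decompositions of $F^{(1)}_{q+1}$ and $F^{(2)}_{q+1}$ from Propositions \ref{def-F2} and \ref{F1}). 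By Proposition \ref{est-Fq} one has $\mathcal N(F_{q+1})\lesssim \lambda_q^{-40}$, while the inductive hypothesis \eqref{e:vq-C0} combined with Proposition \ref{p:estimates-for-mollified} gives $\|\ulqnl\|_X \lesssim \sum_{j=0}^{q-1}\lambda_j^{-1}$; both can be rendered arbitrarily small for $a$ sufficiently large. A contraction argument then produces a unique solution $\wtq\in X$ with $\|\wtq\|_{X}\le \lambda_q^{-20}$, which yields the first estimate in \eqref{estimate-wt}.

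To propagate $H^5$ regularity, I would differentiate \eqref{e:wt} up to order five, pair with the corresponding derivative of $\wtq$, absorb the diffusion term, and estimate the nonlinear and drift contributions by Moser-type commutator inequalities. The embedding $B^{5/2}_{2,1}(\R^3)\hookrightarrow W^{1,\infty}(\R^3)$ allows the Gronwall factor $\int_0^T(\|\nabla\wtq\|_{L^\infty}+\|\nabla\ulqnl\|_{L^\infty})\,ds$ to be controlled by $\|\wtq\|_X + \|\ulqnl\|_X$, which is small by the critical estimate and the inductive hypothesis. Combined with $\|F_{q+1}\|_{L^\infty_t H^4}\lesssim \lambda_{q+1}^5$ and $\|\ulqnl\|_{L^\infty_t H^5}\lesssim \lambda_q^{10}$ from Propositions \ref{est-Fq} and \ref{p:estimates-for-mollified}, Gronwall's inequality with bounded exponent yields $\|\wtq\|_{L^\infty_t H^5}\lesssim \lambda_{q+1}^5$.

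The principal difficulty lies in the critical Besov estimate. Since $F_{q+1}$ is controlled only in $\widetilde L^\infty_t B^{-3/2}_{2,1}$, which is two derivatives below the target regularity $B^{1/2}_{2,1}$, naive maximal regularity fails to yield smallness directly; direct interpolation with the large $L^\infty_t H^4$ bound is useless because $\lambda_{q+1}$ is astronomically larger than $\lambda_q$. The resolution is to unfold the divergence or time-derivative structure of each term in $F_{q+1}$ (each constituent being either literally $\Div$, or $\partial_t$ of something, or a high-frequency paraproduct of a slowly varying amplitude against an oscillatory building block) and to close the Picard iteration in $X$ using these structural gains together with the smallness in the negative Besov norm. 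Once the critical estimate is secured, the higher-order $H^5$ propagation is comparatively routine.
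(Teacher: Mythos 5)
Your proposal is correct and follows essentially the same route as the paper: the Duhamel formulation of \eqref{e:wt}, a contraction/continuity argument in the critical Chemin--Lerner space using the heat-smoothing Lemma \ref{heat} together with $\|F_{q+1}\|_{\widetilde L^{\infty}_tB^{-3/2}_{2,1}}\lesssim \lambda_q^{-40}$ from Proposition \ref{est-Fq} and the smallness of the non-local drift from \eqref{e:vq-C0}, followed by a subcritical bootstrap. The only cosmetic difference is in the second step, where the paper bootstraps in $\widetilde L^{\infty}_t B^{5}_{2,2}$ via the integral formulation with interpolation and absorption of the top-order term rather than a differentiated energy estimate with Gronwall; both are routine once the critical smallness is secured.
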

   \begin{proof}
       With the aid of Duhamel formula, we write the equations \eqref{e:wt} in the integral form
\begin{align}
\wtq(x,t)=&\int_0^t e^{(t-s)\Delta}\mathcal{P}\Div (\wtq\otimes \wtq+\uqnl\ootimes \wtq+ \wtq\otimes\uqnl)(s) \dd s\nonumber\\
&-\int_0^t e^{(t-s)\Delta}\mathcal{P} F_{q+1}(s)\dd s,\label{inte-wtq}
\end{align}
where $\mathcal{P}$ is the Leray projector onto divergence-free victor fields.

Thanks to \eqref{e:vq-C0}, we have
\begin{align*}
&\Big\|\int_0^t e^{(t-s)\Delta}\mathcal{P}\Div ( \uqnl\ootimes \wtq+ \wtq\ootimes\uqnl )\dd s\Big\|_{\widetilde L^{\infty}_tB^{1/2}_{2,1}}\\
\lesssim&(\|\uqnl\ootimes \wtq\|_{\widetilde L^{\infty}_tB^{-1/2}_{2,1}}+\|\wtq\ootimes \uqnl\|_{\widetilde L^{\infty}_tB^{-1/2}_{2,1}})\\
\lesssim&\|\uqnl\|_{\widetilde{L}^{\infty}_tB^{1/2}_{2,1}}\|\wtq\|_{\widetilde L^{\infty}_tB^{1/2}_{2,1}}\\
\le&C\lambda^{-1}_0\|\wtq\|_{\widetilde L^{\infty}_tB^{1/2}_{2,1} }.
\end{align*}
By Lemma \ref{heat} and Proposition \ref{est-Fq}, one gets
\begin{align*}
&\Big\|\int_0^t e^{(t-s)\Delta}\mathcal{P} F_{q+1}(s)\dd s\Big\|_{\widetilde L^{\infty}_tB^{1/2}_{2,1} }
\lesssim\big\|F_{q+1}\big\|_{\widetilde L^{\infty}_tB^{-3/2}_{2,1} }\lesssim\lambda^{-40}_q.
\end{align*}
Taking $\lambda_0$ large enough such that $C\lambda^{-1}_0\le \frac{1}{2}$, for large enough $a$, we collect the above two estimates together to obtain
\begin{align*}
\|\wtq\|_{\widetilde L^{\infty}_tB^{1/2}_{2,1}}\lesssim\|\wtq\|^2_{ \widetilde L^{\infty}_tB^{1/2}_{2,1}}+\lambda^{-40}_{q}.
\end{align*}
By  the Banach fixed point theorem and the the continuity method, this estimate implies that as long as $a$ is large enough, the equation \eqref{inte-wtq} admits a unique  mild solution $\wtq$ on $[0,T]$ with
\begin{align}
 \|\wtq\|_{ \widetilde L^{\infty}_tB^{1/2}_{2,1}}\lesssim \lambda^{-40}_{q}\le \lambda^{-20}_{q}.\label{es：wtq}
\end{align}
Moreover, combining Proposition \ref{est-Fq} with \eqref{es：wtq}, note that $L^\infty_t H^4(\R^3)\hookrightarrow \widetilde L^{\infty}_t B^3_{2,2}(\R^3)$, we have
\begin{align*}
 \|\wtq\|_{ \widetilde L^{\infty}_t B^5_{2,2} }
\lesssim&\|\wtq\|_{\widetilde L^{\infty}_tB^{-1}_{\infty,\infty}}\|\wtq\|_{\widetilde L^{\infty}_t B^5_{2,2}}+\|\uqnl\|_{L^\infty_t L^\infty}\|\wtq\|_{\widetilde L^{\infty}_t B^4_{2,2}}\\
 &+\|\uqnl\|_{\widetilde L^{\infty}_t B^4_{2,2}}\|\wtq\|_{L^\infty_t L^\infty}+\|F_{q+1}\|_{\widetilde L^{\infty}_t B^3_{2,2}} \\
 \lesssim&\|\wtq\|_{\widetilde L^{\infty}_tB^{1/2}_{2,1}}\|\wtq\|_{\widetilde L^{\infty}_t B^5_{2,2}}+\|\uqnl\|_{L^\infty_t L^\infty}\|\wtq\|^{7/9}_{\widetilde L^{\infty}_t B^5_{2,2}}\|\wtq\|^{2/9}_{\widetilde L^{\infty}_t B^{1/2}_{2,1}}\\
& +\|\uqnl\|_{\widetilde L^{\infty}_t B^4_{2,2}}\|\wtq\|^{2/9}_{\widetilde L^{\infty}_t B^5_{2,2}}\|\wtq\|^{7/9}_{\widetilde L^{\infty}_t B^{1/2}_{2,1}}+\|F_{q+1}\|_{ L^{\infty}_t H^4}\\
\le&C(\lambda^{-20}_{q}\|\wtq\|_{\widetilde L^{\infty}_t B^5_{2,2}}+\lambda^{25}_q+\lambda^{5}_{q+1})+\frac{1}{2}\|\wtq\|_{ \widetilde L^{\infty}_t B^5_{2,2} }.
\end{align*}
This inequality shows that
\begin{align*}
 \|\wtq\|_{ L^{\infty}_t H^5 } \le \|\wtq\|_{\widetilde L^{\infty}_t B^5_{2,2} }\lesssim\lambda^{5}_{q+1}.
\end{align*}
Hence we complete the proof of Proposition \ref{wtq-H3}.
   \end{proof}
Next, we  estimate the remaining four parts of $w^{{\rm(loc)}}_{q+1}$, namely $\wpq$, $\wcq$, $\wttq$ and $\wttqc$. Then combining the above estimate of $\wtq$, we immediately  obtain the estimates for $w_{q+1}$.

   \begin{proposition}[Estimates for $w_{q+1}$]\label{estimate-wq+1}There exists a  constant {$C_0$} such that
\begin{align}
&{\lambda_{q+1}}\|\wpq\|_{L^{\infty}_tL^1}+\|\wpq\|_{ L^{\infty}_tL^2}+{\lambda^{-6}_{q+1}}\|\wpq\|_{L^{\infty}_tH^5}\le \frac{1}{4}C_0\delta^{1/2}_{q+1},\label{estimate-wp}\\
&{\lambda_{q+1}}\|\wcq\|_{L^{\infty}_tL^1}+\|\wcq\|_{ L^{\infty}_tL^2}+{\lambda^{-5}_{q+1}}\|\wcq\|_{L^{\infty}_tH^5}
\le \lambda^{-\frac {1}{8}}_{q+1},\label{estimate-wc}\\
&\|\wttq\|_{ L^{\infty}_tL^1}+\|\wttq\|_{ L^{\infty}_tL^2}+{\lambda^{-3}_{q+1}}\|\wttq\|_{L^{\infty}_tH^5}
\le \lambda^{-\frac {1}{16}}_{q+1},\label{estimate-wttq}\\
&\|\wttqc\|_{ L^{\infty}_tL^1}+\|\wttqc\|_{ L^{\infty}_tL^2}+{\lambda^{-3}_{q+1}}\|\wttqc\|_{L^{\infty}_tH^5}
\le \lambda^{-\frac {1}{16}}_{q+1},\label{estimate-wttqc}\\
&\|w_{q+1}\|_{ L^{\infty}_tL^2}+{\lambda^{-6}_{q+1}}\|w_{q+1}\|_{L^{\infty}_tH^5}\le \frac{1}{2}C_0\delta^{1/2}_{q+1}.\label{estimate-w}
\end{align}
   \end{proposition}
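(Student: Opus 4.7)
The plan is to estimate each of the four pieces $\wpq$, $\wcq$, $\wttq$, $\wttqc$ individually, using the explicit formulas \eqref{def-wpq-1}, \eqref{def-wcq}, \eqref{wttq}, \eqref{wttqc} combined with the building-block bounds from Proposition \ref{guji1} and the amplitude bounds from Proposition \ref{est-ak}. The bound for $\wtq$ is already in \eqref{estimate-wt}, so once \eqref{estimate-wp}--\eqref{estimate-wttqc} are in hand, \eqref{estimate-w} follows by the triangle inequality after choosing $C_0$ large enough.

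The heart of the argument is the $L^2$ bound in \eqref{estimate-wp}, which must deliver the precise prefactor $\tfrac14 C_0 \delta_{q+1}^{1/2}$. First I exploit the disjointness \eqref{disjoint} to write
\begin{align*}
\|\wpq\|_{L^2}^2 = \sum_{k\in\Lambda}\bigl\|a_{(k,q)}\,\phi_{k,\bar k,\bar{\bar k},\sigma}\,\psi_k\bigr\|_{L^2}^2,
\end{align*}
then apply the improved H\"older inequality from the Appendix to replace $\phi_{k,\bar k,\bar{\bar k},\sigma}^2\psi_k^2$ by its spatial mean $1\cdot 1=1$ (using \eqref{phi-1} and \eqref{psi-1}), at the price of a remainder polynomially small in $\lambda_{q+1}^{-1}$. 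Next I invoke Lemma \ref{first S} together with \eqref{def-akq} to obtain
\begin{align*}
\sum_{k\in\Lambda} a_{(k,q)}^2\,\bar k\otimes\bar k = \eta_q^2\bigl((\chi_q\rho_q)\,\mathrm{Id} - \RR_{\ell_q}\bigr),
\end{align*}
so that taking the trace (using $\tr\RR_{\ell_q}=0$ and $|\bar k|=1$) yields $\sum_k a_{(k,q)}^2 = 3\eta_q^2\chi_q\rho_q$. Plugging into the definition \eqref{energy2} and invoking \eqref{e:ell} then gives
\begin{align*}
\|\wpq\|_{L^2}^2 \le 3\int_{\mathbb R^3}\eta_q^2\chi_q\rho_q\,\dd x + \text{(small remainder)} = e(t) - \int_{\mathbb R^3}|u_{\ell_q}|^2\,\dd x - 2\delta_{q+2} + \text{(small remainder)} \le 4\delta_{q+1},
\end{align*}
so that $\|\wpq\|_{L^\infty_t L^2}\le \tfrac14 C_0\delta_{q+1}^{1/2}$ once $C_0\ge 8$ and $a$ is sufficiently large.

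For the remaining bounds on $\wpq$, the $L^1$ estimate uses Proposition \ref{guji1} with $p=1$ to gain a factor $r^{3/2}=\lambda_{q+1}^{-9/8}$ from $\phi_{k,\bar k,\bar{\bar k},\sigma}$, while the $H^5$ estimate distributes derivatives across the three factors, costing at most $\lambda_{q+1}^{13/16}$ per derivative on $\phi_{k,\bar k,\bar{\bar k},\sigma}$ and $\lambda_{q+1}$ per derivative on $\psi_k$, for a total of $\lambda_{q+1}^5$ which sits comfortably below the allowed $\lambda_{q+1}^6 \delta_{q+1}^{1/2}$ thanks to \eqref{Con1}. The three correctors are treated by the same template but enjoy additional smallness: $\wcq$ carries an explicit $\lambda_{q+1}^{-1}$ from \eqref{def-wcq}, $\wttq$ carries the factor $\mu^{-1}=\lambda_{q+1}^{-5/4}$ from \eqref{wttq}, and the inverse Laplacian in \eqref{wttqc} gives $\wttqc$ a further $\mu^{-1}\sigma^{-1}=\lambda_{q+1}^{-21/16}$, producing the improved powers of $\lambda_{q+1}$ in \eqref{estimate-wc}--\eqref{estimate-wttqc}.

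Finally, \eqref{estimate-w} follows from \eqref{estimate-wp}--\eqref{estimate-wttqc} together with \eqref{estimate-wt}, since the corrector contributions and $\wtq$ are of order $\lambda_{q+1}^{-c}$ or $\lambda_q^{-20}$, strictly smaller than $\tfrac14 C_0\delta_{q+1}^{1/2}$ for $a$ large. The principal obstacle is tracking the constants carefully in the $L^2$ estimate for $\wpq$: the improved H\"older remainder must genuinely vanish as $a\to\infty$, and the prefactor produced by the geometric lemma must match $3\int\eta_q^2\chi_q\rho_q$ exactly, so that the inductive bound \eqref{e:vq-C0} closes without geometric accumulation of $C_0$ through the scheme.
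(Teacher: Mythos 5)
Your proposal is correct and structurally matches the paper's proof: each of $\wpq,\wcq,\wttq,\wttqc$ is bounded by pairing the amplitude estimates of Proposition \ref{est-ak} with the building-block estimates of Proposition \ref{guji1} (plus Lemma \ref{Holder} for the $L^2$ norms), and \eqref{estimate-w} then follows from \eqref{estimate-wt} and the triangle inequality. The one place you genuinely diverge is the $L^2$ bound for $\wpq$: you compute $\|\wpq\|_{L^2}^2$ essentially exactly, using the disjointness \eqref{disjoint}, the improved H\"older inequality to pass to the spatial mean of $\phi^2_{k,\bar k,\bar{\bar k},\sigma}\psi_k^2$, and the trace identity $\sum_k a_{(k,q)}^2=3\eta_q^2\chi_q\rho_q$ from Lemma \ref{first S}, which via \eqref{energy2} and \eqref{e:ell} yields $\|\wpq\|_{L^2}^2\lesssim \delta_{q+1}$ with an explicit constant. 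The paper instead bounds this term crudely — $\|a_{(k,q)}\|_{L^\infty_t L^2}\lesssim\delta_{q+1}^{1/2}$ from \eqref{est-rhoq} and the definition \eqref{def-akq} — and absorbs the universal implied constant into $C_0$; your exact computation is precisely what the paper defers to Proposition \ref{est-E} for the energy gap. Your route buys a sharper constant at the cost of tracking the improved-H\"older remainder twice; the paper's buys brevity here since the statement only requires a universal $C_0$. Two minor imprecisions worth noting, neither fatal: the mean of the product $\phi^2\psi^2$ equals the product of the means only because the two factors oscillate at well-separated frequencies ($\sigma r^{-1}=\lambda_{q+1}^{13/16}$ versus $\lambda_{q+1}$), which should be said explicitly; and the smallness of $\wttq$ is not just $\mu^{-1}=\lambda_{q+1}^{-5/4}$ but $\mu^{-1}\|\phi^2_{k,\bar k,\bar{\bar k},\sigma}\|_{L^2}\lesssim\lambda_{q+1}^{-5/4+9/8}=\lambda_{q+1}^{-1/8}$, since $\phi^2$ concentrates; this still clears the required $\lambda_{q+1}^{-1/16}$.
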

\begin{proof}By the definition of $\wpq$ in \eqref{def-wpq-1} and Lemma \ref{Holder}, we have
  \begin{align*}
\|\wpq\|_{L^\infty_t L^2}
\lesssim& \sum_{k\in\Lambda}\|a_{(k,q)}\|_{L^\infty_t L^2}\|\phi_{k,\bar{k},\bar{\bar{k}},\sigma}  \|_{L^2(\TTT^3)}+\sum_{k\in\Lambda}\lambda^{-\frac{1}{16}}_{q+1}\|a_{(k,q)}\|_{L^\infty_t W^{1,\infty}}\|\phi_{k,\bar{k},\bar{\bar{k}},\sigma}\|_{L^2_t(\TTT^3)}.
\end{align*}
From the definition of $a_{(k,q)}$, \eqref{est-rhoq} and \eqref{akq-HN}, one deduces that
 \begin{align*}
\|a_{(k,q)}\|_{L^\infty_t L^2}\lesssim\delta^{1/2}_{q+1}.
 \end{align*}
This together with Proposition \ref{est-ak} shows that  there exists a universal constant $C_0$ such that
 \begin{align*}
\|\wpq\|_{L^\infty_t L^2}\le& C\delta^{1/2}_{q+1}+C\ell^{-6}_q\lambda^{-\frac{1}{16}}_{q+1}
\le \frac{1}{8}C_0\delta^{1/2}_{q+1}.
 \end{align*}
We infer from Proposition \ref{est-ak} and Proposition \ref{guji1} that
\begin{align*}
\|\wpq\|_{L^\infty_t L^1}\le& \sum_{k\in\Lambda}\|a_{(k,q)}\|_{L^\infty_{t,x}}\|\phi_{k,\bar{k},\bar{\bar{k}},\sigma}\psi_k\|_{L^\infty L^1(\TTT^3)}
\lesssim\ell^{-4}_q\lambda^{-\frac 98}_{q+1},
\end{align*}
and
\begin{align*}
\|\wpq\|_{L^\infty_tH^5}
\le& \sum_{k\in\Lambda}\|a_{(k,q)}\|_{L^\infty_tH^5}\|\phi_{k,\bar{k},\bar{\bar{k}},\sigma}\psi_k\|_{H^5(\TTT^3)}
\lesssim\ell^{-10}_q\lambda^{5}_{q+1}.
\end{align*}
Collecting the three estimates together shows \eqref{estimate-wp}.

Applying Lemma \ref{Holder} and Proposition \ref{guji1} to $\wcq$ in \eqref{def-wcq}, we have
\begin{align*}
\|\wcq\|_{L^\infty_t L^2}
\lesssim &\sum_{k\in\Lambda}\lambda^{-1}_{q+1} \|a_{(k,q)}\phi_{k,\bar{k},\bar{\bar{k}},\sigma} \|_{L^\infty_tH^1}\|\Psi'_k \|_{L^2_t(\TTT^3)}\\
&+\sum_{k\in\Lambda}\lambda^{-2}_{q+1}\|a_{(k,q)}\phi_{k,\bar{k},\bar{\bar{k}},\sigma} \|_{L^\infty_tH^2}\|\Psi'_k\|_{L^2(\TTT^3)}\\
\lesssim&\ell^{-4}_q\lambda^{-\frac{3}{16}}_{q+1} .
\end{align*}
By  Proposition \ref{est-ak} and Proposition \ref{guji1}, we obtain
\begin{align*}
\|\wcq\|_{L^\infty_t L^1}\le &\sum_{k\in\Lambda}\lambda^{-1}_{q+1}\|a_{(k,q)}\|_{L^\infty_t W^{1,\infty}}\|\phi_{k,\bar{k},\bar{\bar{k}},\sigma} \|_{L^\infty_{t}W^{1,1}(\TTT^3)}
\lesssim\ell^{-6}_q\lambda^{-\frac{21}{16}}_{q+1},
\end{align*}
and
\begin{align*}
\|\wcq\|_{L^\infty_tH^5}\le &\sum_{k\in\Lambda}\lambda^{-1}_{q+1}\|a_{(k,q)}\|_{L^\infty W^{6,\infty}}(\|\phi_{k,\bar{k},\bar{\bar{k}},\sigma} \|_{L^\infty_{t}H^6(\TTT^3)}+\|\phi_{k,\bar{k},\bar{\bar{k}},\sigma} \|_{L^\infty_{t}H^1(\TTT^3)}\|\Psi'_k\|_{W^{5,\infty}})\\
\lesssim&\ell^{-16}_q\lambda^{\frac{13}{16}+4}_{q+1}.
\end{align*}
Hence we conclude \eqref{estimate-wc}. With the aid of  Proposition \ref{est-ak} and Proposition \ref{guji1}, one deduces from the definitions of $\wttq$ and $\wttqc$ in \eqref{wttq} and \eqref{wttqc} that
\begin{align*}
&\|\wttq\|_{L^\infty_t L^2}+\|\wttqc\|_{L^\infty_t L^2}\\
\lesssim&\sum_{k\in\Lambda} \mu^{-1}\|a^2_{(k,q)}\|_{L^\infty_{t,x}}\|\phi^2_{k,\bar{k},\bar{\bar{k}},\sigma} \|_{L^2(\TTT^3)}+\sum_{k\in\Lambda}\mu^{-1}\sigma^{-1}\|\nabla a^2_{(k,q)}\|_{L^\infty_{t,x}} \|\phi^2_{k,\bar{k},\bar{\bar{k}}} \|_{L^2(\TTT^3)}\\
\lesssim&\ell^{-8}_q\lambda^{-\frac 18}_{q+1},
\end{align*}
\begin{align*}
\|\wttq\|_{L^\infty_t L^1}+\|\wttqc\|_{L^\infty_t L^1}
\lesssim\sum_{k\in\Lambda} \mu^{-1}\| a^2_{(k,q)}\|_{L^\infty_t W^{1,\infty}}\|\phi^2_{k,\bar{k},\bar{\bar{k}},\sigma} \|_{L^1(\TTT^3)}\lesssim\ell^{-10}_q\lambda^{-\frac 54}_{q+1},
\end{align*}
and
\begin{align*}
&\|\wttq\|_{L^\infty_t H^5}+\|\wttqc\|_{L^\infty_t H^5}\\
\lesssim& \sum_{k\in\Lambda}\mu^{-1}\|a^2_{(k,q)}\|_{L^\infty_t H^5}\|\phi^2_{k,\bar{k},\bar{\bar{k}},\sigma} \|_{H^5(\TTT^3)}+\sum_{k\in\Lambda}\mu^{-1}\sigma^4\|\nabla a^2_{(k,q)}\|_{L^\infty_t H^5}\|\phi^2_{k,\bar{k},\bar{\bar{k}}} \|_{L^2(\TTT^3)}\\
\lesssim&\ell^{-14}_q\lambda^{\frac{43}{16}}_{q+1}.
\end{align*}
The two inequalities show \eqref{estimate-wttq} and \eqref{estimate-wttqc}. Then collecting \eqref{estimate-wp}--\eqref{estimate-wttqc} together yields \eqref{estimate-w}.  Thereby we finish the proof of Proposition \ref{estimate-wq+1}.
\end{proof}

\subsubsection{Estimates for the Reynolds stress $\RR_{q+1}$} In this section, we establish the estimates in order to verify that \eqref{e:RR_q-C0}--\eqref{e:RR_q-tigh} hold at $q+1$ level. Letting
\begin{align}
&\wqloc:={\wpq+\wcq+\wttq+\wttqc},\quad  w_{q+1}=\wqloc+\wtq,\label{def-wloc}\\
&u_{q+1}=u_{\ell_q}+w_{q+1}=(\ulql+\wqloc)+(\ulqnl+\wtq)=:u^{\textup{loc}}_{q+1}+u^{\textup{non-loc}}_{q+1}, \label{def-uloc}\\
&{p_{q+1}=p_{\ell_q}}+\nabla p^{\textup{(ns)}}_{q+1}-P^{(1)}_{q+1}-P^{(2)}_{q+1}-\frac{2}{3}u_{\ell_q}\cdot w_{q+1}-\frac{1}{3}w_{q+1}\cdot w_{q+1},\label{def-pq+1}
\end{align}
we have by \eqref{e:mollified-euler}
\begin{align*}
&\del_tu_{q+1}-\Delta u_{q+1} +\Div(u_{q+1}\otimes u_{q+1})+\nabla p_{q+1}\\
=&\partial_t w_{q+1}-\Delta w_{q+1}+\Div(w_{q+1}\ootimes u_{\ell_q})+\Div(u_{\ell_q}\ootimes w_{q+1})+\Div( w_{q+1}\ootimes w_{q+1})\\
&+\nabla p^{\textup{(ns)}}_{q+1}-P^{(1)}_{q+1}-P^{(2)}_{q+1}+\Div \RR_{\ell_q} +\Div \Rem\\
=& \partial_t \wtq-\Delta \wtq+\Div(\wtq\ootimes u_{\ell_q})+\Div( u_{\ell_q}\ootimes \wtq)+\Div( \wtq\ootimes \wtq)+\nabla p^{\textup{(ns)}}_{q+1}\\
&+\Div(\wpq\ootimes \wpq)+\Div \RR_{\ell_q} -\nabla P^{(1)}_{q+1}+\partial_t \wqloc -\nabla P^{(2)}_{q+1}+\Div \Rem \\
&+\Div( w_{q+1}\ootimes w_{q+1})-\Div( \wpq\ootimes \wpq)-\Div( \wtq\ootimes \wtq)-\Delta\wqloc\\
&+\Div(\wqloc\ootimes u_{\ell_q})+\Div(u_{\ell_q}\ootimes\wqloc).
\end{align*}
With the aid of Proposition \ref{def-F2}--\ref{F1}, since $\wtq$ satisfies the equations \eqref{e:wt}, we conclude from the above equality that
\begin{align}
&\del_tu_{q+1}-\Delta u_{q+1} +\Div(u_{q+1}\otimes u_{q+1})+\nabla p_{q+1}\nonumber\\
=&\,\Div\big(  w_{q+1}\ootimes w_{q+1}- \wpq\ootimes \wpq- \wtq\ootimes \wtq\nonumber\\
&+\wqloc\ootimes u_{\ell_q}+u_{\ell_q}\ootimes\wqloc+\RR^{(1)}_{q+1}+\RR^{(2)}_{q+1}-(\nabla \wqloc +(\nabla \wqloc)^{\TT})\big)\nonumber\\
=:&\Div \RR_{q+1}.\label{def-R-q+1}
\end{align}
Now we are focused on estimating $\RR_{q+1}$. According to the definition of $\RR_{q+1}$ in \eqref{def-R-q+1}, $\spt_x\RR_{q+1}$ is determined by the supports of  $\RR^{(1)}_{q+1}, \RR^{(2)}_{q+1}$ and $\wqloc$. From  $\RR^{(1)}_{q+1} $ and $\RR^{(2)}_{q+1}$ in  Proposition \ref{def-F2} and Proposition \ref{F1}, and $\spt_x\wqloc \subseteq\Omega_{q+1}$, we infer that
\begin{align}\label{suppx-R}
    \spt_x \RR_{q+1} \subseteq\Omega_{q+1}.
\end{align}
\begin{proposition}[Estimates for $\RR_{q+1}$]\label{R-q+1}Let $\RR_{q+1}$ be defined in \eqref{def-R-q+1}, it holds that
\begin{align*}
    & \|\RR_{q+1}\|_{L^\infty_t L^1}\le \delta_{q+2}\lambda^{-4\alpha}_{q+1},\qquad\|\RR_{q+1}\|_{L^\infty_t W^{5,1}}\le \lambda^7_{q+1}.
\end{align*}
\end{proposition}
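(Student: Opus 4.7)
The plan is to expand $\RR_{q+1}$ in \eqref{def-R-q+1} using the splittings $w_{q+1}=\wqloc+\wtq$ and $\wqloc=\wpq+\wcq+\wttq+\wttqc$. Since
\begin{align*}
w_{q+1}\ootimes w_{q+1}-\wpq\ootimes\wpq-\wtq\ootimes\wtq=(\wqloc\ootimes\wqloc-\wpq\ootimes\wpq)+(\wqloc\ootimes\wtq+\wtq\ootimes\wqloc),
\end{align*}
we decompose $\RR_{q+1}$ into five groups: the oscillation error $\wqloc\ootimes\wqloc-\wpq\ootimes\wpq$; the localized-corrector interaction $\wqloc\ootimes\wtq+\wtq\ootimes\wqloc$; the Nash-type term $\wqloc\ootimes u_{\ell_q}+u_{\ell_q}\ootimes\wqloc$; the previously estimated pieces $\RR^{(1)}_{q+1}+\RR^{(2)}_{q+1}$; and the viscous remainder $-(\nabla\wqloc+(\nabla\wqloc)^{\TT})$. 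Every term containing $\wqloc$, $\RR^{(1)}_{q+1}$, or $\RR^{(2)}_{q+1}$ is supported in $\Omega_{q+1}$, so the sum is too, which yields \eqref{suppx-R}.

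For the $L^\infty_t L^1$ bound the main tool is Cauchy--Schwarz, $\|fg\|_{L^1}\le\|f\|_{L^2}\|g\|_{L^2}$. Applied to the oscillation error with \eqref{estimate-wp}--\eqref{estimate-wttqc} it produces at most $\delta^{1/2}_{q+1}\lambda^{-1/16}_{q+1}$; applied to the $\wtq$-interaction with Proposition \ref{wtq-H3} it produces at most $\delta^{1/2}_{q+1}\lambda^{-20}_q$. The Nash-type term is the subtlest: here we use $\|\wqloc\cdot u_{\ell_q}\|_{L^1}\le\|\wqloc\|_{L^1}\|u_{\ell_q}\|_{L^\infty}$, combining the $L^1$ bound $\|\wqloc\|_{L^1}\lesssim\lambda^{-1/16}_{q+1}$ from \eqref{estimate-wp}--\eqref{estimate-wttqc} with $\|u_{\ell_q}\|_{L^\infty}\lesssim\|u_{\ell_q}\|_{H^5}\lesssim\lambda^{10}_q$ (Sobolev embedding and \eqref{e:v_ell-CN+1}). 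Thanks to $b=2^{15}$ in \eqref{b-beta}, $\lambda^{10}_q=\lambda^{10/b}_{q+1}$ is a negligible fractional power, making the product $\lesssim\lambda^{-1/16+10/b}_{q+1}$. The pre-estimated pieces $\RR^{(1)}_{q+1},\RR^{(2)}_{q+1}$ are handled by \eqref{est-Rq1} and \eqref{est-Rq2}, and $\nabla\wqloc$ by $\|\nabla\wqloc\|_{L^1}\lesssim\ell^{-O(1)}_q\lambda^{-1/8}_{q+1}$, obtained by pairing the oscillation frequency $\lambda_{q+1}$ with the tight $L^1$ concentration at scale $r^{3/2}=\lambda^{-9/8}_{q+1}$ of the building block. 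Every group is then bounded by $\ell^{-O(1)}_q\lambda^{-\theta}_{q+1}$ for a fixed $\theta>0$ depending only on $b$, which handily beats $\delta_{q+2}\lambda^{-4\alpha}_{q+1}\sim\lambda^{-O(b^{-3})}_{q+1}$ once $a$ is large.

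For the $W^{5,1}$ bound, compact support of each summand in $\Omega_{q+1}$ reduces $L^1$ to $L^2$ up to a harmless constant, after which the Leibniz rule $\|fg\|_{H^5}\lesssim\|f\|_{H^5}\|g\|_{L^\infty}+\|f\|_{L^\infty}\|g\|_{H^5}$ (with Sobolev embedding for the $L^\infty$ factor) reduces all products to the $H^5$ bounds of Propositions \ref{p:estimates-for-mollified}, \ref{wtq-H3}, and \ref{estimate-wq+1}, together with \eqref{est-Rq1} and \eqref{est-Rq2}. The dominant cost is $\|\wqloc\|_{H^5}\lesssim\lambda^6_{q+1}$ paired with an $L^\infty$ factor of size at most $\lambda^{1/2}_{q+1}$, while the viscous term $\nabla\wqloc$ costs one extra derivative and is controlled by $\lambda^7_{q+1}$; this yields the stated $W^{5,1}$ ceiling.

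The main obstacle is the Nash-type interaction $\wqloc\ootimes u_{\ell_q}$. In the periodic setting one absorbs such a term by an inverse divergence, but in $\R^3$ that operator would destroy the compact support of the new Reynolds stress. We are therefore forced to control the bare matrix product in $L^1$, which is only possible because the intermittent building blocks carry highly concentrated $L^1$ profiles at the small scale $r=\lambda^{-3/4}_{q+1}$, and because the huge growth factor $b=2^{15}$ fixed in \eqref{b-beta} makes $\|u_{\ell_q}\|_{L^\infty}$ negligible compared to any positive power of $\lambda_{q+1}$. Once this delicate balance is secured, all remaining contributions are routine products governed by the bounds of Propositions \ref{p:estimates-for-mollified}, \ref{wtq-H3}, and \ref{estimate-wq+1}.
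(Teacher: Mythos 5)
Your proposal is correct and follows essentially the same route as the paper: the identical regrouping of \eqref{def-R-q+1}, the $L^1\times L^\infty$ treatment of the Nash term $\wqloc\ootimes u_{\ell_q}$ exploiting $\|u_{\ell_q}\|_{L^\infty}\lesssim\lambda_q^{10}=\lambda_{q+1}^{10/b}$, Cauchy--Schwarz for the quadratic corrections, the frequency-versus-concentration balance $\lambda_{q+1}^{-1}\cdot\lambda_{q+1}^{2}\cdot r^{3/2}$ for $\nabla\wqloc$, and the imported bounds \eqref{est-Rq1}--\eqref{est-Rq2}. The only imprecision is in the $W^{5,1}$ step: the $L^\infty$ size of $\wpq$ is of order $r^{-3/2}=\lambda_{q+1}^{9/8}$, not $\lambda_{q+1}^{1/2}$ (this still closes numerically below $\lambda_{q+1}^{7}$), and the paper sidesteps the issue entirely by using the bilinear bound $\|fg\|_{W^{5,1}}\lesssim\|f\|_{L^2}\|g\|_{H^5}+\|f\|_{H^5}\|g\|_{L^2}$ on the compact support rather than Leibniz with $L^\infty$ factors.
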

\begin{proof}
With the help of \eqref{wpq+wcq} and Proposition \ref{est-ak}--\ref{guji1}, we see
\begin{align}
\|\nabla(\wpq+\wcq)\|_{L^{\infty}_t L^1}
\lesssim&\sum_{k\in\Lambda} \lambda^{-1}_{q+1}\|a_{(k,q)} \phi_{k,\bar{k},\bar{\bar{k}},\sigma}\Psi'_k\|_{L^\infty W^{2,1}}\notag\\
\lesssim&\lambda^{-1}_{q+1}\ell^{-8}_q(\|\phi_{k,\bar{k},\bar{\bar{k}},\sigma}\|_{L^\infty_t W^{2,1}(\TTT^3)}+\|\phi_{k,\bar{k},\bar{\bar{k}},\sigma}\|_{L^\infty_t L^1(\TTT^3)}\|\Psi'_k\|_{L^\infty_t W^{2,\infty}})\notag\\
\lesssim&\ell^{-8}_q\lambda^{-\frac{1}{8}}_{q+1},\notag
\end{align}
and
\begin{align}
\|\nabla(\wpq+\wcq)\|_{L^{\infty}_t W^{5,1}}
\lesssim&\sum_{k\in\Lambda} \lambda^{-1}_{q+1}\|a_{(k,q)} \phi_{k,\bar{k},\bar{\bar{k}},\sigma}\Psi'_k\|_{L^\infty_t W^{7,1}}\notag\\
\lesssim&\lambda^{-1}_{q+1}\ell^{-18}_q(\|\phi_{k,\bar{k},\bar{\bar{k}},\sigma}\|_{L^\infty_t W^{7,1}(\TTT^3)}+\|\phi_{k,\bar{k},\bar{\bar{k}},\sigma}\|_{L^\infty_t L^1(\TTT^3)}\|\Psi'_k\|_{L^\infty_t W^{7,\infty}})\notag\\
\lesssim&\lambda^{5}_{q+1}.\notag
\end{align}
Using  Proposition \ref{est-ak}--\ref{guji1}, we deduce from \eqref{wttq+wttqc} that
\begin{align*}
\|\nabla(\wttq+\wttqc)\|_{L^{\infty}_t L^1}
\lesssim& \frac{1}{\mu\sigma }\sum_{k\in\Lambda}\|a_{(k,q)}G(\sigma (x-x_k), \sigma t)\|_{L^\infty_t W^{1,1}}\\
\lesssim& \frac{1}{\mu}\sum_{k\in\Lambda}\|a_{(k,q)}\|_{L^\infty_t W^{1,\infty}}\|\phi^2_{k,\bar{k},\bar{\bar{k}}}\|_{L^\infty_t L^1(\TTT^3)}\\
\lesssim&\ell^{-6}_q\lambda^{-\frac 54}_{q+1},
\end{align*}
and
\begin{align*}
\|\nabla(\wttq+\wttqc)\|_{L^{\infty}_t W^{5,1}}
\lesssim& \frac{1}{\mu\sigma }\sum_{k\in\Lambda}\|a_{(k,q)}G(\sigma (x-x_k), \sigma t)\|_{L^\infty_t W^{6,1}}\\
\lesssim& \frac{\sigma^5}{\mu}\sum_{k\in\Lambda}\|a_{(k,q)}\|_{L^\infty_t W^{6,\infty}}\|\phi^2_{k,\bar{k},\bar{\bar{k}}}\|_{L^\infty_t L^1(\TTT^3)}\\
\lesssim&\ell^{-16}_q\lambda^{\frac{25}{16}}_{q+1}.
\end{align*}
The four estimates together show that
\begin{align}\label{nabla w-LW}
&\|\nabla \wqloc +(\nabla \wqloc)^{\TT}\|_{L^\infty_t L^1}\lesssim \ell^{-8}_q\lambda^{-\frac{1}{8}}_{q+1},\quad\|\nabla \wqloc +(\nabla \wqloc)^{\TT}\|_{L^\infty_t W^{5,1}}\lesssim\lambda^{5}_{q+1}.
\end{align}
Making use of \eqref{e:v_ell-CN+1}, \eqref{estimate-wp}--\eqref{estimate-wttqc}, we have
\begin{align}
&\|\wqloc\ootimes u_{\ell_q}+u_{\ell_q}\ootimes\wqloc\|_{L^\infty_t L^1}
\lesssim\|u_{\ell_q}\|_{L^\infty_{t,x}}\|\wqloc\|_{L^\infty_t L^1}\lesssim\lambda^{10}_q\lambda^{-\frac{1}{16}}_{q+1},
\end{align}
and
\begin{align}
&\|\wqloc\ootimes u_{\ell_q}+u_{\ell_q}\ootimes\wqloc\|_{L^\infty_t W^{5,1}}
\lesssim\|u_{\ell_q}\|_{L^\infty_t H^5 }\|\wqloc\|_{L^\infty_t H^5}\lesssim\lambda^{10}_q\lambda^6_{q+1}.
\end{align}
By virtue of \eqref{estimate-wt}, \eqref{estimate-wp}--\eqref{estimate-w}, we easily deduce that
\begin{align}
&\| w_{q+1}\ootimes w_{q+1}- \wpq\ootimes \wpq- \wtq\ootimes \wtq\|_{L^\infty_t L^1}\notag\\
=&\|w_{q+1}\ootimes \wqloc+\wqloc\ootimes \wtq- \wpq\ootimes \wpq\|_{L^\infty_t L^1}\notag\\
\lesssim&\|\wqloc\|_{L^\infty_t L^2}(\|w_{q+1}-\wpq\|_{L^\infty_t L^2}+\|\wtq\|_{L^\infty_t L^2})\notag\\
&+\|\wpq\|_{L^\infty_t L^2}\|\wqloc-\wpq\|_{L^\infty_t L^2}\notag\\
\lesssim& C_0\delta^{1/2}_{q+1}\lambda^{-20}_q,\label{wotimesw-L}
\end{align}
and
\begin{align}
&\| w_{q+1}\ootimes w_{q+1}- \wpq\ootimes \wpq- \wtq\ootimes \wtq\|_{L^\infty_t W^{5,1}}\notag\\
\lesssim&\|\wpq\|_{L^\infty L^2}\|w_{q+1}-\wpq\|_{L^\infty_t H^5}+\|\wpq\|_{L^\infty_t H^5}\|w_{q+1}-\wpq\|_{L^\infty_t L^2}\notag\\
&+\|\wcq+\wttq+\wttqc\|_{L^\infty_t L^2} \|w_{q+1}\|_{L^\infty_t H^5}+\|\wcq+\wttq+\wttqc\|_{L^\infty_t H^5} \|w_{q+1}\|_{L^\infty_t L^2}\notag\\
&+\|\wtq\|_{L^\infty_t L^2}\|w_{q+1}-\wtq\|_{L^\infty_t H^5}+\|\wtq\|_{L^\infty_t H^5}\|w_{q+1}-\wtq\|_{L^\infty_t L^2}\notag\\
\lesssim& C_0\delta_{q+1}\lambda^{6}_{q+1}.\label{wotimesw-W}
\end{align}
Collecting \eqref{est-Rq1}, \eqref{est-Rq2} and \eqref{nabla w-LW}--\eqref{wotimesw-W} together shows that
\begin{align*}
\|\RR_{q+1}\|_{L^\infty_t L^1}\lesssim\ell^{-12}_q\lambda^{-\frac{1}{16}}_{q+1}+ C_0\delta^{1/2}_{q+1}\lambda^{-20}_q\le \delta_{q+2}\lambda^{-4\alpha}_{q+1},
\end{align*}
and
\begin{align*}
\|\RR_{q+1}\|_{L^\infty_t W^{5,1}}\lesssim \ell^{-8}_q\lambda^{5}_{q+1}+\lambda^{10}_q\lambda^6_{q+1}\le \lambda^7_{q+1}.
\end{align*}
Hence, we complete the proof of Proposition \ref{R-q+1}.
\end{proof}
\subsubsection{Estimates for the energy gap} We need to verify that the energy gap satisfies \eqref{et} at $q+1$ level. In fact, we have the following estimate. \begin{proposition}[The estimate for energy gap]\label{est-E}Let $u_{q+1}=u_{\ell_q}+w_{q+1}$, we have
    \begin{align*}
\Big|e(t)-\int_{\R^3}|u_{q+1}|^2\dd x-2\delta_{q+2}\Big|\le \delta_{q+2}\lambda^{-\alpha}_q.
    \end{align*}
\end{proposition}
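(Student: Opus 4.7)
The plan is to decompose
\[
e(t)-\int_{\R^3}|u_{q+1}|^2\dd x-2\delta_{q+2}
= \Bigl(e(t)-\int_{\R^3}|u_{\ell_q}|^2\dd x-2\delta_{q+2}\Bigr)
-2\int_{\R^3}u_{\ell_q}\cdot w_{q+1}\dd x-\int_{\R^3}|w_{q+1}|^2\dd x,
\]
then show that the first bracket essentially cancels the $L^2$-mass of the principal perturbation $\wpq$, while all remaining contributions are of lower order.

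First, by the definition \eqref{energy2} of $\rho_q$, the bracket equals $3\int_{\R^3}\eta_q^2\chi_q\rho_q\dd x$. Second, using the geometric lemma (Lemma \ref{first S}) in the form derived in the proof of Proposition \ref{def-F2}, i.e.\ $\sum_{k\in\Lambda}a_{(k,q)}^2\bar k\otimes \bar k=\eta_q^2\chi_q\rho_q\,\Id-\eta_q^2\RR_{\ell_q}$, and taking the trace (using $|\bar k|=1$ and that $\RR_{\ell_q}$ is trace-free), we obtain the pointwise identity $\sum_{k\in\Lambda}a_{(k,q)}^2=3\eta_q^2\chi_q\rho_q$. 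Third, by the disjoint support property \eqref{disjoint} we have
\[
\int_{\R^3}|\wpq|^2\dd x=\sum_{k\in\Lambda}\int_{\R^3}a_{(k,q)}^2\,\phi_{k,\bar k,\bar{\bar k},\sigma}^2\,\psi_k^2\dd x.
\]
Since $\phi_{k,\bar k,\bar{\bar k},\sigma}^2$ has period $\sigma^{-1}=\lambda_{q+1}^{-1/16}$ and $\psi_k^2$ has period $(N_\Lambda\lambda_{q+1})^{-1}$, both with unit mean by \eqref{phi-1} and \eqref{psi-1}, the improved Hölder inequality (stated in the appendix) combined with Proposition \ref{est-ak} gives
\[
\Bigl|\int_{\R^3}a_{(k,q)}^2\,\phi_{k,\bar k,\bar{\bar k},\sigma}^2\,\psi_k^2\dd x-\int_{\R^3}a_{(k,q)}^2\dd x\Bigr|
\lesssim \sigma^{-1}\|a_{(k,q)}^2\|_{C^1}\lesssim \ell_q^{-C}\lambda_{q+1}^{-1/16},
\]
so that
\[
\Bigl|\int_{\R^3}|\wpq|^2\dd x-3\int_{\R^3}\eta_q^2\chi_q\rho_q\dd x\Bigr|\lesssim \ell_q^{-C}\lambda_{q+1}^{-1/16}.
\]

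Next, I would bound the two leftover error pieces. The cross term $\int u_{\ell_q}\cdot w_{q+1}\dd x$ is handled by splitting $w_{q+1}=\wpq+(w_{q+1}-\wpq)$: for $\wpq$ one writes $\wpq$ as a divergence via \eqref{def-wpq} and integrates by parts against $u_{\ell_q}$, exploiting that $\sigma^{-1}\lambda_{q+1}^{-1}$ is tiny compared to the high-frequency gain, giving a bound of order $\ell_q^{-C}\lambda_{q+1}^{-1/16}\|u_{\ell_q}\|_{L^\infty_tH^1}$; for the corrector pieces $\wcq,\wttq,\wttqc,\wtq$ one applies Cauchy–Schwarz together with Proposition \ref{estimate-wq+1} and Proposition \ref{wtq-H3}, obtaining $\lesssim \|u_{\ell_q}\|_{L^\infty_tL^2}(\lambda_{q+1}^{-1/16}+\lambda_q^{-20})$. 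The term $\int(|w_{q+1}|^2-|\wpq|^2)\dd x$ is controlled using Proposition \ref{estimate-wq+1} and Proposition \ref{wtq-H3} via
\[
\bigl|\,\|w_{q+1}\|_{L^2}^2-\|\wpq\|_{L^2}^2\bigr|
\lesssim (\|\wpq\|_{L^2}+\|w_{q+1}-\wpq\|_{L^2})\|w_{q+1}-\wpq\|_{L^2}
\lesssim \delta_{q+1}^{1/2}\lambda_{q+1}^{-1/16}.
\]

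Collecting all estimates, the total error is of size $\ell_q^{-C}\lambda_{q+1}^{-1/16}+\lambda_q^{10}\lambda_{q+1}^{-1/16}+\delta_{q+1}^{1/2}\lambda_{q+1}^{-1/16}$; since $\ell_q=\lambda_q^{-60}$, $\lambda_{q+1}=\lambda_q^b$ with $b=2^{15}$, and $\alpha\le b^{-7}$, we have $\lambda_{q+1}^{-1/16}\ll \delta_{q+2}\lambda_q^{-\alpha}$ for $a$ large enough, which yields the claim. The main obstacle, as in all such schemes, is the first quantitative step: producing an oscillation gain of size $\sigma^{-1}$ in $\int a_{(k,q)}^2\phi_{k,\bar k,\bar{\bar k},\sigma}^2\psi_k^2\dd x$ while keeping the amplitude derivatives under the control granted by Proposition \ref{est-ak}, since the amplitudes lose many powers of $\ell_q^{-1}$; however the enormous parameter gap between $\lambda_q$ and $\lambda_{q+1}^{1/16}$ absorbs these losses.
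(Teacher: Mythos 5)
Your proposal is correct and follows essentially the same route as the paper: the identical decomposition into the bracket $e(t)-\int|u_{\ell_q}|^2-2\delta_{q+2}$ minus the cross and quadratic terms, the cancellation of the bracket against $\int|\wpq|^2$ via \eqref{energy2} and Lemma \ref{first S}, and Cauchy--Schwarz plus the $L^1$/$L^2$ perturbation bounds for the remainders. The only (cosmetic) differences are that you take the trace of the geometric identity pointwise and quantify the oscillation error with the improved H\"older inequality (a single $\sigma^{-1}$ gain), whereas the paper keeps the tensor form and integrates by parts $L=60$ times in the Fourier expansion — both yield errors far below $\delta_{q+2}\lambda_q^{-\alpha}$.
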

\begin{proof}We write
\begin{align*}
\int_{\R^3}|u_{q+1}|^2\dd x&=\int_{\R^3}|u_{\ell_q}|^2\dd x+\int_{\R^3}|w_{q+1}|^2\dd x+2\int_{\R^3}w_{q+1}\cdot u_{\ell_q}\dd x,
\end{align*}
then
\begin{align*}
&e(t)-\int_{\R^3}|u_{q+1}|^2\dd x-2\delta_{q+2}\\
=&\Big(e(t)-\int_{\R^3}|u_{\ell_q}|^2\dd x-2\delta_{q+2}-\int_{\R^3}|\wpq|^2\dd x\Big)\\
&+\Big(\int_{\R^3}|\wpq|^2\dd x-\int_{\R^3}|w_{q+1}|^2\dd x-2\int_{\R^3}w_{q+1}\cdot u_{\ell_q}\dd x\Big)\\
=:& {\rm I+ II}.
\end{align*}
Thanks to \eqref{e:v_ell-CN+1}, \eqref{estimate-wt} and \eqref{estimate-wp}--\eqref{estimate-wttqc}, one deduces that
\begin{align*}
\Big |\int_{\R^3}u_{\ell_q}\cdot w_{q+1}\dd x\Big|
\le& \|u_{\ell_q}\|_{L^\infty_t L^2}\|\wtq\|_{L^\infty_t L^2}\\
&+\|u_{\ell_q}\|_{L^\infty_{t,x}}(\|\wpq\|_{L^\infty_t L^1}+\|\wcq\|_{L^\infty_t L^1}+\|\wttq\|_{L^\infty_t L^1}+\|\wttqc\|_{L^\infty_t L^1})\\
\lesssim &\lambda^{-10}_q+\lambda^{10}_q\lambda^{-\frac{1}{16}}_{q+1} \lesssim \lambda^{-10}_q.
\end{align*}
By $w_{q+1}=\wpq+\wcq+\wtq+\wttq+\wttqc$, we rewrite
\begin{align*}
&\int_{\R^3}|\wpq|^2\dd x-\int_{\R^3}|w_{q+1}|^2\dd x\\
=&-\int_{\R^3}(\wcq+\wtq+\wttq+\wttqc)\cdot (\wpq+w_{q+1})\dd x.
\end{align*}
With the aid of  \eqref{estimate-wt} and \eqref{estimate-wp}--\eqref{estimate-wttqc}, one has
\begin{align*}
&\Big|\int_{\R^3}|\wpq|^2\dd x-\int_{\R^3}|w_{q+1}|^2\dd x\Big|\\
\le&(\|w_{q+1}\|_{L^\infty_t L^2}+\|\wpq\|_{L^\infty_t L^2})(\|\wcq\|_{L^\infty_t L^2}+\|\wtq\|_{L^\infty_t L^2}+\|\wttq\|_{L^\infty_t L^2}+\|\wttqc\|_{L^\infty_t L^2})\\
\lesssim&C_0\delta^{1/2}_{q+1}\lambda^{-20}_q.
\end{align*}
Hence, we conclude that
\begin{align}\label{est-II}
|{\rm II}| \lesssim&C_0\lambda^{-10}_q.
\end{align}
Noting that $\spt_x \wpq\subseteq [-\tfrac{1}{2}, \tfrac{1}{2}]^3=:D$, by the definition of $\wpq$, we deduce that
\begin{align}
&\int_{\R^3}|\wpq|^2\dd x=\int_{ D}\tr (\wpq\otimes\wpq)\dd x\nonumber\\
=&\int_{D}\tr \Big(\sum_{k\in\Lambda}
 a^2_{(k,q)} \phi^2_{k,\bar{k},\bar{\bar{k}},\sigma}\psi^2_k\bar{k}\otimes \bar{k}\Big)\dd x\nonumber\\
 =&
 \int_{D}\tr \Big(\sum_{k\in\Lambda}
 a^2_{(k,q)} \bar{k}\otimes \bar{k}\Big)\dd x+\int_{D}\tr \Big(\sum_{k\in\Lambda}
 a^2_{(k,q)} \mathbb{P}_{\neq 0}(\phi^2_{k,\bar{k},\bar{\bar{k}},\sigma})\bar{k}\otimes \bar{k}\Big)\dd x \nonumber\\
 &+\int_{D}\tr \Big(\sum_{k\in\Lambda}
 a^2_{(k,q)} \phi^2_{k,\bar{k},\bar{\bar{k}},\sigma}\mathbb{P}_{\neq 0}(\psi^2_k)\bar{k}\otimes \bar{k}\Big)\dd x\label{I}
\end{align}
By virtue of Lemma \ref{first S}, the first term on the right-hand side of \eqref{I} can be rewritten as
\begin{align}
\int_{D}\tr \Big(\sum_{k\in\Lambda}a^2_{(k,q)}  \bar{k}\otimes \bar{k}\Big)\dd x
 =3{\rho}_q\int_{D}\eta^2_q\chi_q\dd x
 =e(t)-\int_{\R^3}|u_{\ell_q}|^2\dd x-2\delta_{q+2},\label{Tr-kotimesk}
\end{align}
where we have used \eqref{energy2}. Via Fourier coefficient, we rewrite $\mathbb{P}_{\neq 0}(\phi^2_{k,\bar{k},\bar{\bar{k}},\sigma})$
as
\[\mathbb{P}_{\neq 0}(\phi^2_{k,\bar{k},\bar{\bar{k}},\sigma})=\sum_{|l+m|\neq 0}b_{m,r}b_{l,r}e^{\ii  {2\pi}(m+l)\sigma k\cdot x}.\]
Using integration by parts, we have
\begin{align*}
&\int_{D}\tr \Big(\sum_{k\in\Lambda}
 a^2_{(k,q)} \mathbb{P}_{\neq 0}(\phi^2_{k,\bar{k},\bar{\bar{k}},\sigma})\bar{k}\otimes \bar{k}\Big)\dd x \\
 =&\tr \Big(\sum_{k\in\Lambda} \sum_{|l+m|\neq 0} \frac{-\ii b_{m,r}b_{l,r}}{2\pi(l+m)\sigma}\int_{D}
 a^2_{(k,q)}\Div \big(e^{\ii  {2\pi}(m+l)\sigma k\cdot x} k\big)\dd x \bar{k}\otimes \bar{k}\Big)\\
 =&\tr \Big(\sum_{k\in\Lambda} \sum_{|l+m|\neq 0} \frac{-\ii b_{m,r}b_{l,r}}{2\pi(l+m)\sigma}\int_{D}
 \nabla a^2_{(k,q)}\cdot k e^{\ii  {2\pi}(m+l)\sigma k\cdot x} \dd x \bar{k}\otimes \bar{k}\Big).
\end{align*}
Hence, via  integration by parts for $L$ times with $L=60$, one infers that
\begin{align}
&\Big|\int_{D}\tr \Big(\sum_{k\in\Lambda}
 a^2_{(k,q)} \mathbb{P}_{\neq 0}(\phi^2_{k,\bar{k},\bar{\bar{k}},\sigma})\bar{k}\otimes \bar{k}\Big)\dd x\Big|\notag\\
 \lesssim& \sum_{|l+m|\neq 0} \frac{ |b_{m,r}||b_{l,r}|}{|l+m|^L\sigma^{L}} \|a^{2}_{k,q}\|_{L^\infty_t W^{L,1}}\lesssim \ell^{-200}_q\lambda^{-1}_{q+1}.\label{Tr-1}
\end{align}
Following the method on deriving the above inequality, we immediately show that
\begin{align}
&\Big|\int_{D}\tr \Big(\sum_{k\in\Lambda}
 a^2_{(k,q)} \phi^2_{k,\bar{k},\bar{\bar{k}},\sigma}\mathbb{P}_{\neq 0}(\psi^2_k)\bar{k}\otimes \bar{k}\Big)\dd x\Big|\notag\\
 \lesssim& \sum_{|l+m|\neq 0} \frac{ |c_{m}||c_{l}|}{|l+m|^L\lambda^{L}_{q+1}} \|a^{2}_{k,q}\phi^2_{k,\bar{k},\bar{\bar{k}},\sigma}\|_{L^\infty_t W^{L,1}}\lesssim \ell^{-8}_q\lambda^{-10}_{q+1}.\label{Tr-2}
\end{align}
Plugging \eqref{Tr-kotimesk}--\eqref{Tr-2} into \eqref{I} yields that
\begin{align*}
|{\rm I}|\lesssim \ell^{-200}_q\lambda^{-1}_{q+1}.
\end{align*}
This estimate together with \eqref{est-II} shows that
\begin{align*}
\Big|e(t)-\int_{\R^3}|u_{q+1}|^2\dd x-2\delta_{q+2}\Big|
\lesssim& C_0\lambda^{-10}_q \le \delta_{q+2}\lambda^{-\alpha}_q.
\end{align*}
Hence, we complete the proof of Proposition \ref{est-E}.
\end{proof}
\subsection{Iterative estimates at $q+1$ level}\label{checkq+1}
Now we collect these estimates together to show that $u_{q+1}$ and the Reynolds  stress $\RR_{q+1}$  satisfies \eqref{uq-tigh}--\eqref{e:velocity-diff}.

Firstly, we define $u_{q+1}=u_{\ell_q}+w_{q+1}=\uqql+\uqqnl$, where
\[\uqql=\ulql+\wqloc, \quad\uqqnl=\ulqnl+\wtq.\]
Using  \eqref{e:vq-C0}--\eqref{e:vq-H5}, \eqref{e:v_ell-vq} and Proposition \ref{estimate-wq+1}, we deduce that
\begin{align*}
   &\|\uqql\|_{L^{\infty}_t L^2}\le \sum_{j=0}^{q}C_0\delta^{1/2}_{j+1}+C\lambda^{10}_q\ell_q+\frac{1}{2}C_0\delta^{1/2}_{q+1}\le \sum_{j\geq0}^{q+1}C_0\delta^{1/2}_{j+1},\\
   &\|\uqqnl\|_{\widetilde{L}^{\infty}_tB^{1/2}_{2,1}}\le \sum_{j=0}^{q-1}\lambda^{-1}_j+C\lambda^{10}_q\ell_q+\lambda^{-20}_q\le \sum_{j=0}^{q}\lambda^{-1}_j,
   \end{align*}
   and
   \begin{align*}
   &\|(\uqql,\uqqnl)\|_{{L}^{\infty}_t H^5}\le C\lambda^{10}_q+\lambda^6_{q+1}\le \lambda^{10}_{q+1}.
\end{align*}
Since
$$\spt_x \wpq,\spt_x\wcq,\spt_x\wttq,\spt_x\wttqc\subseteq\Omega_{q+1}$$
and $\spt_x\ulql=\Omega_{q}+[-\lambda^{-1}_q, \lambda^{-1}_q]^3\subseteq \Omega_{q+1}$ in \eqref{supp-vlq}, we have
\[\spt_x \uqql\subseteq\Omega_{q+1}.\]
Combining with \eqref{suppx-R}, we prove that estimates \eqref{uq-tigh}--\eqref{e:vq-H5} and \eqref{e:RR_q-tigh} hold with $q$ replaced by $q+1$. Proposition \ref{R-q+1} and Proposition \ref{est-E} directly show \eqref{e:RR_q-C0} and \eqref{et} hold at $q+1$ level.

 Thanks to \eqref{e:v_ell-vq} and \eqref{estimate-w}, we obtain
\begin{align*}
    \|u_{q+1}-u_q\|_{ L^\infty_t L^2}\le &   \|u_{q+1}-u_{\ell_q}\|_{ L^\infty_t L^2}+\|w_{q+1}\|_{L^\infty_t L^2}
    \le C \lambda_q\ell_q+\frac{1}{2}C_0\delta^{1/2}_{q+1}\le C_0\delta^{1/2}_{q+1}.
\end{align*}
where the last inequality holds for large enough $a$. Therefore, we complete the proof of Proposition ~\ref{iteration}.

\section{Proof of Proposition \ref{p:main-prop2}}\label{proof-2}
In fact, by  tracking the construction process of the perturbation, it is not difficult to show Proposition \ref{p:main-prop2}. Under the assumption that
\begin{align*}
 \uql=\tuql, \quad\uqnl=\tuqnl, \quad \RR_{q } ={\tRR}_{q },\quad\text{on}\quad [0, \tfrac{T}{4}+\lambda^{-1}_q],
\end{align*}
then one immediately infers that
\begin{align*}
 \ulql=\tulql, \quad\ulqnl=\tulqnl, \quad \RR_{\ell_{q} } ={\tRR}_{\ell_{q} },\quad\text{on}\quad [0, \tfrac{T}{4}+\tfrac{1}{2}\lambda^{-1}_q],
\end{align*}
where $(\ulql, \ulqnl, \RR_{\ell_q})$ and  $(\tulql,  \tulqnl, \tRR_{\ell_{q} })$ are defined  by mollifying $(\uql, \uqnl, \RR_{q})$ and  $(\tuql,  \tuqnl, {\tRR}_{{q} })$ via spatial  mollifier $\psi_{\ell_q}$ and the time mollifier $\varphi_{\ell_q}$, as is shown in \eqref{def-ulql}--\eqref{def-Rlq}. This ensures that
\begin{align}
&\Rem=\widetilde{R}^{\textup{rem}}_q, \quad \text{on} \quad [0, \tfrac{T}{4}+\tfrac{1}{2}\lambda^{-1}_q].\label{Remq=tRemq}\\
&\chi_q=\widetilde{\chi}_q, \qquad\,\,\,\, \text{on} \quad [0, \tfrac{T}{4}+\tfrac{1}{2}\lambda^{-1}_q].\label{cq=tcq}
\end{align}
Owning to \eqref{cq=tcq} and $e(t)=\widetilde{e}(t)$ for $0\le t\le \tfrac{T}{4}+\lambda^{-1}_1$, then $\rho_q$ and $\widetilde{\rho}_q$ defined by \eqref{energy2} satisfy that
\begin{align*}
    \rho_q(t)=\widetilde{\rho}_q(t), \quad \text{on} \quad [0, \tfrac{T}{4}+\tfrac{1}{2}\lambda^{-1}_q].
\end{align*}
The definition of amplitudes $a_{(k,q)}$ in \eqref{def-akq} shows that $a_{(k,q)}$ is determined by $ \RR_{\ell_{q} }$, $\chi_q$ and $\rho_q$. Therefore, the above relations tell that
\[a_{(k,q)}=\widetilde{a}_{(k,q)}, \quad \text{on}\quad [0, \tfrac{T}{4}+\tfrac{1}{2}\lambda^{-1}_q].\]
This together with the definitions of $\wpq$, $\wcq$, $\wttq$ and $\wttqc$ in \eqref{def-wpq}--\eqref{wttqc} immediately gives
\begin{align}
    \wpq=\twpq,\,\, \wcq=\twcq,\,\, \wttq=\twttq,\,\,\wttqc=\twttqc\quad \text{on} \quad [0, \tfrac{T}{4}+\tfrac{1}{2}\lambda^{-1}_q].\label{wql=twql}
\end{align}
Hence, Proposition \ref{def-F2} and Proposition \ref{F1} implies that
\begin{align}
&\RR^{(1)}_{q+1}=\tRR^{(1)}_{q+1},\quad \RR^{(2)}_{q+1}=\tRR^{(2)}_{q+1},\quad [0, \tfrac{T}{4}+\tfrac{1}{2}\lambda^{-1}_q]\label{Rq=tRq}\\
&F^{(1)}_{q+1}=\widetilde{F}^{(1)}_{q+1},\quad\, F^{(2)}_{q+1}=\widetilde{F}^{(2)}_{q+1},\quad \,\,[0, \tfrac{T}{4}+\tfrac{1}{2}\lambda^{-1}_q].\label{Fq=tFq}
\end{align}
By the definition of $F_{q+1}$ in \eqref{def-F_{q+1}}, \eqref{Remq=tRemq}, \eqref{wql=twql} and \eqref{Fq=tFq} together show  that
\[F_{q+1}=\widetilde{F}_{q+1}, \quad \text{on} \quad [0, \tfrac{T}{4}+\tfrac{1}{2}\lambda^{-1}_q]. \]
Since $\wtq$ is the unique local-in-time solution of  the equations \eqref{e:wt} with the force term $F_{q+1}$, we  conclude that
\begin{align}\label{wtq=twtq}
\wtq=\twtq, \quad \text{on} \quad [0, \tfrac{T}{4}+\tfrac{1}{2}\lambda^{-1}_q].
\end{align}
Then, by defining $(\wqloc, \uqql, \uqqnl, \widetilde{w}^{\textup{loc}}_{q+1}, \widetilde{u}^{\textup{non-loc}}_{q+1},\widetilde{u}^{\textup{non-loc}}_{q+1})$ in \eqref{def-wloc}--\eqref{def-uloc}, one infers from \eqref{wql=twql} and \eqref{wtq=twtq} that
\begin{align}\label{uqq=tuqq}
\wqloc=\widetilde{w}^{\textup{loc}}_{q+1}, \,\,\uqql=\widetilde{u}^{\textup{loc}}_{q+1}, \,\,\uqqnl=\widetilde{u}^{\textup{non-loc}}_{q+1},\,\,\text{on} \quad [0, \tfrac{T}{4}+\tfrac{1}{2}\lambda^{-1}_q].
\end{align}
Since $\RR_{q+1}$ and $\tRR_{q+1}$ are defined in \eqref{def-R-q+1},  we obtain by \eqref{Rq=tRq}, \eqref{wtq=twtq} and \eqref{uqq=tuqq} that
\[\RR_{q+1}=\tRR_{q+1}, \quad \text{on} \quad [0, \tfrac{T}{4}+\tfrac{1}{2}\lambda^{-1}_q]. \]
Therefore, we complete the proof of Proposition \ref{p:main-prop2}.

\section{Proof of Theorem \ref{dns} and Theorem \ref{Couette}}\label{proof of application}
In this section, we employ the method in proving Proposition \ref{iteration} to show Theorem \ref{dns} and Theorem \ref{Couette}.
\subsection{Proof of  Theorem \ref{dns}}
We will prove Theorem \ref{dns} by showing the following proposition.
\begin{proposition}\label{t:dns}
Let $T>0$, $\mathcal{D}$ be a smooth  bounded domain, $e(t)$ and $\widetilde{e}(t)$ with $e(t)=\widetilde{e}(t),~t\in[0, \tfrac{T}{2}]$. Then there exist  weak solutions $ u, \widetilde{u}  \in C([0,T];L^2(\mathcal{D}))$ for the system  \eqref{DNS} with $u(0,x)=\widetilde{u}(0,x)$ and
$$\|u(t)\|^2_{L^2}=e(t), ~~~\|\widetilde{u}(t)\|^2_{L^2}=\widetilde{e}(t).$$
\end{proposition}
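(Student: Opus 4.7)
The plan is to mirror the iteration of Proposition \ref{iteration}, with only three substantive modifications to accommodate a smooth bounded domain $\mathcal{D}$ and the no-slip condition on $\partial\mathcal{D}$. First, redefine the exhausting family $\{\Omega_q\}$ so that $\Omega_q \Subset \mathcal{D}$, $\Omega_q \nearrow \mathcal{D}$, and $\dist(\Omega_q,\partial\mathcal{D}) \gtrsim \lambda_q^{-\alpha}$. Decompose each iterate as $v_q = \vql + \vqnl$, demand that $\vql$ and the Reynolds stress $\RR_q$ be compactly supported in $\Omega_q$, and that $\vqnl$ vanish on $\partial\mathcal{D}$. These two requirements together enforce the no-slip condition $v_q|_{\partial\mathcal{D}} = 0$ automatically at every step, so we never need to perturb the boundary values.

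Second, keep the mollification step and the construction of the perturbations $\wpq$, $\wcq$, $\wttq$, $\wttqc$ essentially verbatim. Since each is built from periodic intermittent building blocks multiplied by amplitudes $a_{(k,q)}$ compactly supported in $\Omega_{q+1} \Subset \mathcal{D}$, one may embed a unit cube $Q \Subset \mathcal{D}$ containing every $\Omega_q$ and recycle the building blocks of Section \ref{proof-1} without change. The algebraic decompositions of Propositions \ref{def-F2} and \ref{F1}, and the estimates supplied by Proposition \ref{guji1}, are purely local to this cube and carry over identically.

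Third, and most crucially, redefine the localized corrector $\wtq$ as the solution to the Dirichlet initial-boundary-value problem
\begin{equation*}
\left\{\begin{alignedat}{-1}
&\partial_t \wtq - \Delta \wtq + \Div\bigl(\wtq\ootimes\wtq + \wtq\ootimes \vlqnl + \vlqnl\ootimes\wtq\bigr) + \nabla p^{\textup{(ns)}}_{q+1} = F_{q+1}, \\
&\Div \wtq = 0 \ \text{in } \mathcal{D}, \qquad \wtq|_{\partial\mathcal{D}} = 0, \qquad \wtq|_{t=0} = 0,
\end{alignedat}\right.
\end{equation*}
with $F_{q+1}$ still given by \eqref{def-F_{q+1}}. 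The Lerner--Chemin space $\widetilde L^\infty_t B^{1/2}_{2,1}(\R^3)$ in \eqref{e:vq-C0} is replaced by a Stokes-adapted analog on $\mathcal{D}$ (for instance $L^\infty_t L^2_{\sigma}(\mathcal{D}) \cap L^2_t H^1_0(\mathcal{D})$ together with higher-regularity $L^\infty_t H^5$ control), and the mild-solution estimates of Proposition \ref{wtq-H3} are re-proved using smoothing bounds for the Dirichlet--Stokes semigroup on $\mathcal{D}$ in place of the full-space heat semigroup. Once $\wtq$ is under control, the Reynolds-stress bound of Proposition \ref{R-q+1} and the energy-gap estimate of Proposition \ref{est-E} follow by the same algebra (the spatial integrations are over $\mathcal{D}$, but the integrands are compactly supported in $\Omega_{q+1}$), and the passage to the limit and matching of initial data repeat Section \ref{P-to-T} verbatim via the obvious analogue of Proposition \ref{p:main-prop2}.

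The main obstacle is the well-posedness and regularity theory for $\wtq$ in this bounded-domain, no-slip setting. The Dirichlet--Stokes semigroup has strictly weaker smoothing in $L^p$-based Sobolev spaces than the whole-space heat semigroup, so one must verify that $F_{q+1}$, although not in general divergence-free or compatible with the boundary, still yields the $L^\infty_t H^5(\mathcal{D})$ bound feeding the next mollification. This is handled by invoking the classical analyticity and maximal-regularity theory for the Stokes operator on $C^\infty$ bounded domains; the rest of the induction then propagates unchanged, and choosing monotonically non-increasing $e(t), \widetilde e(t)$ with $e(t) \neq \widetilde e(t)$ on $(T/2,T]$ produces the infinitely many kinetic-energy-dissipating weak solutions claimed in Theorem \ref{dns}.
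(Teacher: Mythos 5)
Your overall architecture matches the paper's: decompose $v_q=\vql+\vdq$ with $\vql$ and $\RR_q$ compactly supported in the interior and $\vdq$ carrying the no-slip condition, reuse the perturbations $\wpq,\wcq,\wttq,\wttqc$ verbatim, and redefine the localized corrector $\wtq$ as the solution of a forced Navier--Stokes system on $\mathcal{D}$ with Dirichlet boundary data. (Two harmless deviations: the paper keeps every $\Omega_q$ inside one fixed cube $\Omega\Subset\mathcal{D}$ rather than exhausting $\mathcal{D}$, and it mollifies only the local part $\vql$, precisely so as not to disturb the boundary values of $\vdq$.)

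However, there is a genuine gap at the step you yourself flag as "most crucial". You propose to keep the forcing $F_{q+1}$ exactly as in \eqref{def-F_{q+1}} and to recover the smallness of $\wtq$ from "a Stokes-adapted analog" of $\widetilde L^\infty_t B^{-3/2}_{2,1}$ together with analyticity/maximal regularity of the Dirichlet--Stokes semigroup. This does not close: $F_{q+1}$ is \emph{large} in $L^\infty_t L^2$ (indeed $\|F_{q+1}\|_{L^\infty_t H^4}\le\lambda_{q+1}^5$), and its smallness lives only in a negative-order Littlewood--Paley norm whose bounded-domain, divergence-free, no-slip counterpart is not available off the shelf; maximal regularity for the Stokes operator cannot convert a force that is small merely in $H^{-3/2}$ into a solution that is small in $L^\infty_t H^1_0$ without first restructuring the force. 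The paper's actual key idea, which your proposal is missing, is to apply the inverse-divergence iteration of Lemma \ref{tracefree} \emph{further} to the problematic pieces of $F^{(1)}_{q+1}$ and $F^{(2)}_{q+1}$ (and to absorb $\partial_t\wttqc$ into that decomposition), producing modified stresses and a residual force $\widetilde F_{q+1}=\widetilde F^{(1)}_{q+1}+\widetilde F^{(2)}_{q+1}+\Rem$ that is compactly supported in $\Omega_{q+1}$ and genuinely small in $L^\infty_t L^2$, namely $\|\widetilde F_{q+1}\|_{L^\infty_t L^2}\lesssim\lambda_q^{-40}$. Only then does the classical Fujita--Kato local theory in $H^1_0(\mathcal{D})$ apply and yield $\|\wtq\|_{L^\infty_t H^1_0(\mathcal{D})}\le\lambda_q^{-20}$. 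Without this extra decomposition your induction cannot propagate the smallness of the non-local part, so the argument as written would fail at the corrector step.
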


We prove this proposition via an iteration proposition, which can be proved by the iterative scheme developed in Section 3.

Firstly, we assume that the parameters $b, \beta, \alpha, \lambda_q,\ell_q$ are defined in \eqref{b-beta} and \eqref{def-lq}. For a given open bounded domain $\mathcal{D}$  in Proposition \ref{t:dns}, there exists a closed cube domain $\Omega$ such  that $\Omega \subsetneqq \mathcal{D}$.  For $e(t)$ and $\widetilde{e}(t)$ in Proposition \ref{t:dns}, there exist an appropriate small constant $c$ such that $c\delta_2<e(t),\widetilde{e}(t)<\frac{3}{c}\delta_2$. Without loss of generality, we set $\Omega=[-\tfrac{1}{2}, \tfrac{1}{2}]^3\subsetneqq \mathcal{D}$ and $c=1$. Let $\Omega_q$ be given by~\eqref{omega}.

Secondly, we consider the following Navier-Stokes-Reynolds system:
 \begin{equation}\label{DNS-R}
\left\{ \begin{alignedat}{-1}
&\del_t v_q-\Delta v_q+(v_q\cdot\nabla) v_q  +\nabla p_q   =  \Div \RR_q, \quad{\rm in}\quad {\mathcal{D}}\times(0,T_q],
 \\
 &\nabla \cdot v_q = 0,\quad\qquad\qquad \qquad \qquad \qquad\qquad \,  {\rm in}\quad  {\mathcal{D}}\times(0,T_q],\\
& v_q=0, \quad\qquad\qquad \qquad \qquad \qquad\qquad \quad\, \, \,\, \, {\rm on}\quad  {\partial\mathcal{D}}\times[0,T_q],
\end{alignedat}\right.
\end{equation}
where $T_q=T+\sum_{k=q}^\infty\ell_k$.

Thirdly, for $t\in [0, T_q]$, we assume that $(v_q, p_q, \RR_q)$ is a smooth solution of the system \eqref{DNS-R} and satisfies that
\begin{align}
&\vq=\vql+\vdq,   \quad\quad\quad\quad\quad\,\,\Div \vql=\Div \vdq=0, \label{v-decom}\\
    &\|\vql\|_{L^{\infty}_tL^2}\leq\sum_{j\geq0}^{q}C_0\delta^{1/2}_{j+1},\quad\|\vql\|_{L^{\infty}_{t}H^5} +\|\partial_t\vql\|_{L^\infty_t H^2}\leq \lambda^{10}_q,\label{est:vql-dns}\\
& \|\vdq\|_{L^{\infty}_tH^1_0(\mathcal{D}_q)} \le \sum_{j\geq0}^{q-1}\lambda^{-1}_{j}, \quad\|\vdq\|_{L^{\infty}_{t}H^3}\le \lambda^{10}_q,
    \\
    &\| \RR_q \|_{L^{\infty}_{t}L^1 }  \le \delta_{q+1}\lambda_q^{-4 \alpha} , \qquad \| \RR_q \|_{L^{\infty}_{t}W^{5,1} }  \le  \lambda_q^{10},
       \\
     &{\spt_{x}} \vql, {\spt_{x}} \RR_q\subseteq \Omega_q   ,\\
    &\delta_{q+1}\leq e(t)-\int_{\mathbb{R}^3}|v_q|^2dx \leq 3\delta_{q+1},\label{e-vq}
\end{align}
where $C_0$ is a sufficiently large universal constant. Then, we aim to show the  iteration proposition as follows.
\begin{proposition}\label{ite-P-1}
Let $(v_1,p_1,\RR_1 )=(0,0,0)$. Assume that $(v_q,p_q,\RR_q )$ solves
\eqref{DNS-R} and satisfies  \eqref{v-decom}--\eqref{e-vq},
then there exists a solution $ (v_{q+1},  p_{q+1}, \RR_{q+1} )$ of the system \eqref{DNS-R} on ${\mathcal{D}}\times(0,T_{q+1}]$, satisfying \eqref{v-decom}--\eqref{e-vq} with $q$ replaced by $q+1$, and such that
\begin{align}
\|v_{q+1} - v_q\|_{L^\infty_tL^2(\mathcal{D})} &\leq  C_0\delta_{q+1}^{1/2},\quad~t\in[0,T_{q+1}].
\label{e:vq-diff}
\end{align}
\end{proposition}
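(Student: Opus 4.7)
The plan is to adapt the four-step scheme of Proposition \ref{iteration} (mollify, build the local perturbation, solve for the localized corrector, verify the inductive bounds) to the bounded-domain setting. Two structural observations make the adaptation essentially automatic. First, since $\Omega_{q+1}\subsetneq\mathcal{D}$ strictly by \eqref{omega} and the choice $\Omega=[-\tfrac12,\tfrac12]^3\subsetneq\mathcal{D}$, every piece of the perturbation supported in $\Omega_{q+1}$ automatically vanishes on $\partial\mathcal{D}$ and contributes nothing to the Dirichlet condition. Second, the role played on $\R^3$ by $\wtq$ can be played on $\mathcal{D}$ by the mild solution of the same linearized Navier--Stokes system \emph{with homogeneous Dirichlet data}; since its forcing is compactly supported in $\Omega_{q+1}$ and small, this Dirichlet problem is solvable by a fixed-point argument in the Stokes-semigroup framework.

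\textbf{The four steps.} In Step 1, mollify $(v_q,p_q,\RR_q)$ in space and in forward time with the kernels of Definition \ref{e:defn-mollifier-t} to obtain $(v_{\ell_q},p_{\ell_q},\RR_{\ell_q},R^{\mathrm{rem}}_q)$ on $\mathcal{D}\times[0,T_{q+1}]$; here the choice $T_q=T+\sum_{k\ge q}\ell_k$ ensures that the time interval loses exactly $\ell_q$ after forward mollification, and the spatial mollifier enlarges supports by at most $\ell_q$, still well inside $\Omega_{q+1}$. In Step 2, form the amplitudes $a_{(k,q)}$ as in \eqref{def-akq} with the cutoff $\eta_q$ of \eqref{eta}, and build $\wpq,\wcq,\wttq,\wttqc$ verbatim from \eqref{def-wpq-1}--\eqref{wttqc}; Propositions \ref{def-F2}, \ref{F1} and \ref{estimate-wq+1} carry over without modification because those estimates are local and never see $\partial\mathcal{D}$. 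In Step 3, define $\wtq$ as the mild solution of the Dirichlet analog of \eqref{e:wt},
\begin{align*}
&\partial_t\wtq-\Delta\wtq+\Div\bigl(\wtq\ootimes\wtq+\wtq\ootimes\vdq+\vdq\ootimes\wtq\bigr)+\nabla p^{(\mathrm{ns})}_{q+1}=F_{q+1},\\
&\Div\wtq=0,\qquad \wtq|_{\partial\mathcal{D}}=0,\qquad \wtq|_{t=0}=0,
\end{align*}
with $F_{q+1}$ as in \eqref{def-F_{q+1}}. In Step 4, set $v_{q+1}^{\mathrm{loc}}:=v_{\ell_q}^{\mathrm{loc}}+\wpq+\wcq+\wttq+\wttqc$, $v_{q+1}^{D}:=v_{\ell_q}^{D}+\wtq$, $v_{q+1}:=v_{q+1}^{\mathrm{loc}}+v_{q+1}^{D}$, and define $\RR_{q+1}$ by the identity \eqref{def-R-q+1}; the bounds \eqref{v-decom}--\eqref{e-vq} at level $q+1$, together with \eqref{e:vq-diff}, then follow as in Propositions \ref{R-q+1}, \ref{est-E} and Subsection \ref{checkq+1}, using $\wtq|_{\partial\mathcal{D}}=0$ and the compact support of the local pieces inside $\Omega_{q+1}\Subset\mathcal{D}$.

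\textbf{Main obstacle.} The only genuinely new ingredient is Step 3: replacing the $\R^3$ heat-semigroup estimates of Proposition \ref{wtq-H3} (which used the Lerner--Chemin space $\widetilde L^\infty_tB^{1/2}_{2,1}$) by corresponding bounds for the Dirichlet Stokes semigroup $e^{-tA}$ on $\mathcal{D}$. This explains why the inductive statement now phrases the non-local piece in the energy space $L^\infty_tH^1_0(\mathcal{D})$ rather than in a Besov norm: $H^1_0$ is the natural setting for $A$, the a priori smallness $\|\vdq\|_{L^\infty_tH^1_0}\le\sum_{j\le q-1}\lambda_j^{-1}\ll 1$ is still enough to absorb the coupling term $\vdq\ootimes\wtq$, and the smallness of $F_{q+1}$ in Proposition \ref{est-Fq} (together with the fact that $\spt_xF_{q+1}\subset\Omega_{q+1}\Subset\mathcal{D}$, which is well separated from $\partial\mathcal{D}$) controls the quadratic self-interaction of $\wtq$. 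Once these Stokes-semigroup bounds are in hand, all remaining verifications (compact support of $\RR_{q+1}$ in $\Omega_{q+1}$, its $L^1$ and $W^{5,1}$ bounds, the energy-gap identity, and the difference estimate \eqref{e:vq-diff}) are direct transcriptions of the whole-space arguments of Section \ref{proof-1}.
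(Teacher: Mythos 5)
Your overall architecture matches the paper's, and you correctly identified the Dirichlet analogue of \eqref{e:wt} for the localized corrector as the genuinely new ingredient; but there are two places where "transcribe the whole-space argument" actually fails on the bounded domain, and the paper handles both differently. The first is the mollification. You propose to mollify all of $(v_q,p_q,\RR_q)$ in space, but $v_q=\vql+\vdq$ and the non-local part $\vdq$ is only in $H^1_0(\mathcal{D})$: it vanishes on $\partial\mathcal{D}$ in the trace sense yet is not supported away from the boundary, so its spatial convolution with $\psi_{\ell_q}$ is undefined without extending it outside $\mathcal{D}$, and after extension the mollified field no longer vanishes on $\partial\mathcal{D}$, breaking the Dirichlet condition for the mollified system. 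The paper therefore mollifies only the compactly supported pieces, setting $v_{\ell_q}=\vql\ast\psi_{\ell_q}\ast\varphi_{\ell_q}+\vdq$, $p_{\ell_q}=p_q$, $\RR_{\ell_q}=\RR_q\ast\psi_{\ell_q}\ast\varphi_{\ell_q}$; accordingly $\Rem$ is not the quadratic commutator of \eqref{def-Rem} but the force $\Div(\RR_q-\RR_{\ell_q})+\Div((\vlql-\vql)\otimes v_{\ell_q})+\Div(v_q\otimes(\vlql-\vql))+(\partial_t-\Delta)(\vlql-\vql)$, controlled in $L^\infty_tL^2$ using the extra hypothesis $\|\partial_t\vql\|_{L^\infty_tH^2}\le\lambda_q^{10}$ built into \eqref{est:vql-dns}.

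The second, more serious gap concerns the forcing of the corrector. You feed the whole-space $F_{q+1}$ of \eqref{def-F_{q+1}} into the Dirichlet problem and invoke "the smallness of $F_{q+1}$ in Proposition \ref{est-Fq}". That smallness is only in $\widetilde L^\infty_tB^{-3/2}_{2,1}$; in $L^2$-based norms $F_{q+1}$ is large ($\|F_{q+1}\|_{L^\infty_tH^4}\le\lambda_{q+1}^5$). On $\R^3$ the negative-order smallness is converted into smallness of $\wtq$ through the heat-flow smoothing of Lemma \ref{heat} in Lerner--Chemin spaces; there is no off-the-shelf substitute for this with the Dirichlet Stokes semigroup in $H^1_0(\mathcal{D})$, and your proposal does not supply one. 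The paper avoids the problem structurally: it applies the inverse-divergence iteration of Lemma \ref{tracefree} further (also to $\partial_t\wttqc$, cf.\ \eqref{pt-wtc}, which is absorbed into $\mathring{\widetilde{R}}^{(2)}_{q+1}$ rather than left in the force), producing errors $\widetilde{F}^{(1)}_{q+1},\widetilde{F}^{(2)}_{q+1}$ and hence $\widetilde{F}_{q+1}$ that are small in $L^\infty_tL^2$ itself, $\|\widetilde{F}_{q+1}\|_{L^\infty_tL^2}\lesssim\lambda_q^{-40}$, which is what the Fujita--Kato theory \cite{FK} in $H^1_0(\mathcal{D})$ requires. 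Without this re-decomposition your fixed-point argument for $\wtq$ does not close.
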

Furthermore, we establish a proposition similar to Proposition \ref{p:main-prop2}.
\begin{proposition}\label{ite-P-2}Let $T>0$ and $e(t)=\widetilde{e}(t)$ for $t\in[0, \tfrac{T}{4}+\lambda^{-1}_1  ]$. Suppose that  $(v_q ,p_q,\RR_q)$ solves
\eqref{DNS-R}  and satisfies \eqref{v-decom}--\eqref{e-vq},    $(\widetilde{v}_q, \widetilde{p}_q, \widetilde{\RR}_q)$  solves
\eqref{DNS-R} and satisfies \eqref{v-decom}--\eqref{e-vq} with $e(t)$ replaced by $\widetilde{e}(t)$. Then   if
\begin{align*}
 \vql=\widetilde{v}^{\textup{loc}}_q, \quad{v}^{\textup{loc}}_q=\widetilde{v}^{\textup{D}}_q, \quad \RR_{q } ={\tRR}_{q },\quad\text{on}\quad [0, \tfrac{T}{4}+\lambda^{-1}_q],
\end{align*}
then we have
\begin{align*}
 {v}^{\textup{loc}}_{q+1}=\widetilde{v}^{\textup{loc}}_{q+1}, \quad{v}^{\textup{D}}_{q+1}=\widetilde{v}^{\textup{D}}_{q+1}, \quad \RR_{q+1 } ={\tRR}_{q+1},\quad\text{on}\quad [0, \tfrac{T}{4}+\lambda^{-1}_{q+1}].
\end{align*}
\end{proposition}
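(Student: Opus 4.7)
The plan is to transcribe the proof of Proposition \ref{p:main-prop2} to the Dirichlet setting, tracking each ingredient of the $q\mapsto q+1$ construction used in Proposition \ref{ite-P-1} and checking that it depends only on data on the interval where the two towers already coincide. Each forward-in-time mollification loses a time window of at most $\ell_q=\lambda_q^{-60}$, and since $\lambda_{q+1}^{-1}=\lambda_q^{-b}$ with $b=2^{15}$, there is ample slack to land on $[0,\tfrac{T}{4}+\lambda_{q+1}^{-1}]\subset[0,\tfrac{T}{4}+\tfrac12\lambda_q^{-1}]$.

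First, applying $\psi_{\ell_q}*_x\varphi_{\ell_q}*_t$ (forward in time, as in \eqref{def-ulql}--\eqref{def-Rem}) to the triple and using the hypothesis $\vql=\widetilde v^{\textup{loc}}_q$, $\vdq=\widetilde v^{\textup{D}}_q$, $\RR_q=\widetilde\RR_q$ on $[0,\tfrac{T}{4}+\lambda_q^{-1}]$, one reads off $v^{\textup{loc}}_{\ell_q}=\widetilde v^{\textup{loc}}_{\ell_q}$, $v^{\textup{D}}_{\ell_q}=\widetilde v^{\textup{D}}_{\ell_q}$, $\RR_{\ell_q}=\widetilde\RR_{\ell_q}$, and the remainder stress $R^{\textup{rem}}_q=\widetilde R^{\textup{rem}}_q$ on $[0,\tfrac{T}{4}+\tfrac12\lambda_q^{-1}]$. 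Since $\chi_q$ is a pointwise function of $\RR_{\ell_q}$, and since the analog of $\rho_q$ (see \eqref{energy2}) is built only from $e(t)$, $\int|v_{\ell_q}|^2\dd x$, and $\int\eta_q^2\chi_q\dd x$ --- with $e=\widetilde e$ on $[0,\tfrac{T}{4}+\lambda_1^{-1}]$ by assumption --- the amplitudes $a_{(k,q)}$ in \eqref{def-akq} satisfy $a_{(k,q)}=\widetilde a_{(k,q)}$ on the same interval. The explicit formulas \eqref{def-wpq-1}--\eqref{wttqc} then yield $\wpq=\twpq$, $\wcq=\twcq$, $\wttq=\twttq$, $\wttqc=\twttqc$, and Propositions \ref{def-F2}--\ref{F1} force the decomposition stresses $\RR^{(1)}_{q+1},\,\RR^{(2)}_{q+1}$ and errors $F^{(1)}_{q+1},\,F^{(2)}_{q+1}$ to agree with their tildes on $[0,\tfrac{T}{4}+\tfrac12\lambda_q^{-1}]$ as well.

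By \eqref{def-F_{q+1}} this propagates to $F_{q+1}=\widetilde F_{q+1}$ on the same interval. Appealing to the Dirichlet analog of Proposition \ref{wtq-H3} --- local-in-time uniqueness for the (Stokes-semigroup based) version of \eqref{e:wt} with vanishing initial and boundary data --- the two localized correctors agree on $[0,\tfrac{T}{4}+\tfrac12\lambda_q^{-1}]$. Reassembling $v^{\textup{loc}}_{q+1},\,v^{\textup{D}}_{q+1},\,\RR_{q+1}$ from their constituents, as in the analog of \eqref{def-wloc}--\eqref{def-R-q+1}, and restricting to the smaller interval $[0,\tfrac{T}{4}+\lambda_{q+1}^{-1}]$ closes the argument. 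The only genuinely new point, and the main obstacle compared with Section \ref{proof-2}, is establishing this uniqueness on $\mathcal{D}$: it should be handled by rerunning the Banach fixed-point argument of Proposition \ref{wtq-H3} with the heat semigroup $e^{t\Delta}$ replaced by the Stokes semigroup $e^{-tA_\mathcal{D}}$, absorbing $v^{\textup{D}}_{\ell_q}$ into the linear operator in exactly the role that $\ulqnl$ plays in Section \ref{proof-1}; the requisite smallness comes from the Dirichlet counterpart of \eqref{est:vql-dns}.
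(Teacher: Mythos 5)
Your proposal is correct and follows exactly the route the paper intends: the paper omits the proof of Proposition \ref{ite-P-2}, stating it is copied almost word by word from the proof of Proposition \ref{p:main-prop2}, and your transcription (mollification preserves equality up to a time loss $\ell_q\ll\lambda_{q+1}^{-1}$, hence the amplitudes, the explicit perturbations, the decomposition stresses and forcing terms, and finally $\wtq$ all coincide) is precisely that argument adapted to $\mathcal{D}$. The one genuinely new ingredient you flag --- uniqueness of the localized corrector for the Dirichlet problem \eqref{wtq-NSD} --- is handled in the paper by invoking the local well-posedness theory of Fujita--Kato \cite{FK}, which is the Stokes-semigroup fixed-point argument you describe.
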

By using Proposition \ref{ite-P-1} and Proposition \ref{ite-P-2}, we immediately show Proposition \ref{t:dns}. Readers can refer to Section \ref{P-to-T} for the detailed proof. The proof of  Proposition \ref{ite-P-2}, can be copied almost word by word from the proof of Proposition  \ref{p:main-prop2} and thereby we omit the details.

We are in position to prove Proposition  \ref{ite-P-1}. This strategy is similar to that of Proposition \ref{p:main-prop2}. Therefore, we only need to outline the main ideas of the proof and highlight the differences.
\vskip 2mm
\noindent{\textit{Proof of Proposition  \ref{ite-P-1}}.}\,\, As is shown in the proof of  Proposition \ref{p:main-prop2}, we construct $(v_{q+1}, p_{q+1}, R_{q+1})$ by two steps: mollification and the perturbation.
\vskip 2mm
\noindent \textbf{Mollification}: ${(\vq, p_q, \RR_q)\mapsto (v_{\ell_q},  p_{\ell_q}, \RR_{\ell_q})}$.  Due to the restrictions of Dirichlet boundary condition, here $(v_{\ell_q},  p_{\ell_q}, \RR_{\ell_q})$ is different from that given by \eqref{def-ulq}, \eqref{def-plq} and \eqref{def-Rlq}.  More precisely, using the spatial mollifiers $\psi_{\ell_q}(x)$ and temporal mollifiers $\varphi_{\ell_q}(t)$, we define $(v_{\ell_q},  p_{\ell_q}, \RR_{\ell_q})$ by
\begin{align*}
    &v_{\ell_q}=\vql\ast\psi_{\ell_q}\ast\varphi_{\ell_q}+\vdq=:\vlql+\vdq,\\
   & p_{\ell_q}=p_q,\quad
    \RR_{\ell_q}=\RR_q\ast\psi_{\ell_q}\ast\varphi_{\ell_q}.
\end{align*}
One verifies that $ (v_{\ell_q},  p_{\ell_q}, \RR_{\ell_q})$ obeys the following system
 \begin{equation}\label{DNS-Rl}
\left\{ \begin{alignedat}{-1}
&\del_t v_{\ell_q}-\Delta v_{\ell_q}+(v_{\ell_q}\cdot\nabla) v_{\ell_q}  +\nabla p_{\ell_q}   =  \Div \RR_{\ell_q}+ \Rem, \quad\quad{\rm in}\quad {\mathcal{D}}\times(0,T_{q+1}],
 \\
&  \nabla \cdot v_{\ell_q} = 0, \qquad\qquad\qquad\qquad\qquad\qquad\qquad\qquad\qquad\quad{\rm in}\quad  {\mathcal{D}}\times(0,T_{q+1}],\\
& v_{\ell_q}=0, \qquad\qquad\qquad\qquad\qquad\qquad\qquad\qquad\qquad\quad\quad\,\,\,{\rm on}\quad  {\partial\mathcal{D}}\times[0,T_{q+1}],
\end{alignedat}\right.
\end{equation}
where
\begin{align*}
\Rem=&\Div(\RR_q-\RR_{\ell_q})+\Div\big((\vlql-\vql)\otimes v_{\ell_q}\big)+\Div\big(v_q\otimes(\vlql-\vql)\big)\\
&+ (\partial_t-\Delta)(\vlql-\vql).
\end{align*}
From the definitions of $(v_{\ell_q}, \RR_{\ell_q}, \Rem)$ and the estimates \eqref{est:vql-dns}--\eqref{e-vq}, we have, for any integers $L,N\ge 0$,
\begin{align*}
&\|\partial^L_t\vlql \|_{L^\infty_tH^{N+3}} \lesssim  \lambda^{10}_{q} \ell_q^{-N-L},
\\
&{\|\RR_{\ell_q}\|_{L^\infty L^1} \le  \delta_{q+1} \lambda^{-4\alpha}_q},  \quad\|\partial^L_t\RR_{\ell_q}\|_{L^\infty_tW^{N+5,1}} \lesssim   \lambda^{10}_{q} \ell_q^{-N-L},
\\
&\|\Rem\|_{L^\infty_t L^2} \lesssim \lambda^{-40}_q, \,\,\,\qquad\|\Rem\|_{L^\infty_t H^2} \lesssim \lambda^{20}_q,\\
 & \frac{\delta_{q+1}}{2}\leq e(t) - \int_{\mathbb R^3}  |v_{\ell_q}|^2 \dd x\leq 4\delta_{q+1}.
\end{align*}
}

\vskip 2mm
\noindent \textbf{The perturbation}: ${(v_{\ell_q},  p_{\ell_q}, \RR_{\ell_q})}\mapsto (v_{q+1}, p_{q+1}, \RR_{q+1})$. We give $w_{q+1}$ by
\begin{align*}
    w_{q+1}=\wpq+\wcq+\wttq+\wttqc+\wtq,
\end{align*}
where $\wpq,\wcq,\wttq$ and $\wttqc$ are consistent with that in \eqref{def-wpq}--\eqref{wttqc}. Therefore, we immediately obtain the equality \eqref{eq-wpwp}. Following the method on deriving \eqref{decom-(a-phi)}, we perform a more refined decomposition on certain items of $F^{(1)}_{q+1}$ in \eqref{eq-wpwp} via Lemma \ref{tracefree} and infer that there exist a trace-free matrix $\mathring{\widetilde{R}}^{(1)}_{q+1}$, a pressure $\widetilde{P}^{(1)}_{q+1}$ and error term $\widetilde{F}^{(1)}_{q+1}$ such that
\begin{align}\label{wpwp-dec-1}
\Div(\wpq\ootimes \wpq)+\Div\RR_{\ell_q}=\Div \mathring{\widetilde{R}}^{(1)}_{q+1}-\partial_t\wttq+\widetilde{F}^{(1)}_{q+1}+\nabla \widetilde{P}^{(1)}_{q+1},
\end{align}
satisfying
\begin{align}\label{wpwp-R-1}
\|\mathring{\widetilde{R}}^{(1)}_{q+1}\|_{L^\infty_t L^1}\lesssim {\ell^{-8}_q}\lambda^{-\frac{1}{16}}_{q+1}, \quad \|\mathring{\widetilde{R}}^{(1)}_{q+1}\|_{L^\infty_t W^{5,1}}\lesssim {\ell^{-8}_q}\lambda^{5}_{q+1},\quad \spt_x \mathring{\widetilde{R}}^{(1)}_{q+1}\subseteq\Omega_{q+1},
\end{align}
and
\begin{align}\label{wpwp-F-1}
    \|\widetilde{F}^{(1)}_{q+1}\|_{{L}^\infty_t L^2}\lesssim\ell^{-22}_q \lambda^{-\frac{1}{32}}_{q+1},\quad \|\widetilde{F}^{(1)}_{q+1}\|_{{L}^\infty_t H^2}\lesssim \ell^{-18}_q\lambda^{\frac{35}{8}}_{q+1}, \quad \spt_x \widetilde{F}^{(1)}_{q+1}\subseteq\Omega_{q+1}.
\end{align}
Similarly, applying Lemma \ref{tracefree} to $\eqref{pt-wtc}$ and combining with Proposition~\ref{F1}, we conclude that  there exist a trace-free matrix $\RR^{(2)}_{q+1}$, a pressure $\nabla \widetilde{P}^{(2)}_{q+1}$ and error term $\widetilde{F}^{(2)}_{q+1}$ such that
\begin{align*}
    \partial_t (\wpq+\wcq+\wttqc)
    =:&\Div\mathring{\widetilde{R}}^{(2)}_{q+1}+\nabla \widetilde{P}^{(2)}_{q+1}+\widetilde{F}^{(2)}_{q+1},
\end{align*}
where $\RR^{(2)}_{q+1}$ and $F^{(2)}_{q+1}$   satisfies \eqref{est-Rq2} and \eqref{wpwp-F-1}, respectively. Then we define
\begin{align*}
    \widetilde{F}_{q+1}:=& \widetilde{F}^{(1)}_{q+1}+\widetilde{F}^{(2)}_{q+1}+ \Rem
\end{align*}
and we have
\begin{align}\label{est-tF}
\|\widetilde{F}_{q+1}\|_{L^\infty_t L^2}\lesssim\ell^{-22}_q\lambda^{-\frac{1}{32}}_{q+1}+\lambda^{-40}_q\lesssim\lambda^{-40}_q, \quad \|\widetilde{F}_{q+1}\|_{L^\infty_t H^2}\le \lambda^5_{q+1}.
\end{align}
Let $\wtq$ obey the following system
 \begin{equation}\label{wtq-NSD}
\left\{ \begin{alignedat}{-1}
&\del_t \wtq-\Delta \wtq+\Div (\wtq\ootimes \wtq)+\Div(\wtq\ootimes{v}^{\textup{D}})\\
&\qquad\quad+\Div({v}^{\textup{D}}\ootimes \wtq)  +\nabla p^{(\textup{ns})}_{q+1} + \widetilde{F}_{q+1}=0, \quad\,\,{\rm in}\quad {\mathcal{D}}\times(0,T_{q+1}],
 \\
&  \nabla \cdot \wtq = 0,\qquad\quad\qquad\quad \qquad\quad\quad\quad\quad\quad\quad\quad\,\,{\rm in}\quad  \mathcal{D}\times(0,T_{q+1}],\\
& \wtq=0,\quad\qquad\quad\qquad\quad \qquad\quad\qquad\qquad\qquad  \,\,\,\,\, {\rm on} \quad{\partial\mathcal{D}}\times[0,T_{q+1}],\\
&\wtq=0,\quad\qquad\quad\qquad\quad \qquad\quad\qquad\qquad\qquad \,\,\,\,\,  {\rm on} \quad{\mathcal{D}}\times\{t=0\}.
\end{alignedat}\right.
\end{equation}
By the local well-posedness theory in \cite{FK}, owning to \eqref{est-tF} and $\|{v}^{\textup{D}}_{{\ell_q}}\|_{L^\infty_t H^1_0(\mathcal{D})}\lesssim \lambda^{-1}_0$, for choosing $\lambda_0$ large enough, there exists a unique solution $\wtq$ of the system \eqref{wtq-NSD} on $[0, T_{q+1}]$ with
\[\|\wtq\|_{L^\infty_t H^1_0(\mathcal{D})}\le \lambda^{-20}_q, \quad \|\wtq\|_{L^\infty_t H^3(\mathcal{D})}\lesssim \lambda^{5}_{q+1}.\]
We define
\begin{align*}
&\wqloc:={\wpq+\wcq+\wttq+\wttqc},\quad  w_{q+1}=\wqloc+\wtq,\\
&v_{q+1}=v_{\ell_q}+w_{q+1}=(\vlql+\wqloc)+({v}^{\textup{D}}_q+\wtq)=:v^{\textup{loc}}_{q+1}+{v}^{\textup{D}}_{{q+1}}, \\
&{p_{q+1}=p_{\ell_q}}+\nabla p^{\textup{(ns)}}_{q+1}-\widetilde{P}^{(1)}_{q+1}-\widetilde{P}^{(2)}_{q+1}-\frac{2}{3}v_{\ell_q}\cdot w_{q+1}-\frac{1}{3}w_{q+1}\cdot w_{q+1}.\nonumber
\end{align*}
Then one infers that $(v_{q+1}, p_{q+1}, \RR_{q+1})$ satisfies the system \eqref{DNS-R} on ${\mathcal{D}}\times(0,T_{q+1}]$, where
\begin{align}
\RR_{q+1}=&\mathring{\widetilde{R}}^{(1)}_{q+1}+\mathring{\widetilde{R}}^{(2)}_{q+1}+ w_{q+1}\ootimes w_{q+1}- \wpq\ootimes \wpq- \wtq\ootimes \wtq\nonumber\\
&-\nabla \wqloc -(\nabla \wqloc)^{\TT}+\wqloc\ootimes v_{\ell_q}+v_{\ell_q}\ootimes\wqloc.\nonumber
\end{align}
Following the proof as shown in Section \ref{checkq+1}, we conclude that $(v_{q+1}, p_{q+1}, \RR_{q+1})$ satisfies  \eqref{v-decom}--\eqref{e-vq} with $q$ replaced by $q+1$ and \eqref{e:vq-diff}. Hence we complete the proof of Proposition \ref{ite-P-1}.

\subsection{Proof of Theorem \ref{Couette}}
Let $U$ be  the 3D  Couette flow, namely $U=(x_2,0,0)$ in $\R^3$. Let $u$ solve the equations \eqref{NS} and we consider the perturbation $v=u-U$, which satisfies
\begin{equation}\label{couette equation}
\left\{ \begin{alignedat}{-1}
&\del_t v-\Delta v+(v\cdot\nabla) v+x_2\partial_1v+(v^2, 0,0)  +\nabla P   =  0, &{\rm in}\quad {\mathbb{R}^3}\times [0,T],
 \\
&  \nabla \cdot v  = 0, &{\rm in}\quad  {\mathbb{R}^3}\times[0,T],
\end{alignedat}\right.
\end{equation}
where $P=p-P_{Cou}$.

Our aim is to prove the following proposition, which immediately shows Theorem \ref{Couette}.
\begin{proposition}\label{Couette1}
Given $0<\epsilon\ll 1$, there exists a weak solution $\ve$ of the system \eqref{couette equation} such that
\[\|\ve(0, \cdot)\|_{L^2(\R^3)}\leq \epsilon,\]
meanwhile
\[\|\ve(t, \cdot)\|_{L^2(\R^3)}\geq \epsilon^{-1/4},\,\,\,\,\forall t\in[3\epsilon^{1/2},5\epsilon^{1/2}].\]
\end{proposition}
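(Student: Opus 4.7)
The plan is to apply the convex integration scheme of Section~\ref{proof-1} to the perturbation equation~\eqref{couette equation}, choosing a target energy profile that is small at $t=0$ and large on the prescribed interval. First I would fix a smooth nonnegative function $e_{\epsilon}(t):[0,5\epsilon^{1/2}]\to(0,\infty)$ with $e_{\epsilon}(0)\le\tfrac14\epsilon^2$ and $e_{\epsilon}(t)\ge 2\epsilon^{-1/2}$ on $[3\epsilon^{1/2},5\epsilon^{1/2}]$, and then construct $v^{\epsilon}$ satisfying $\|v^{\epsilon}(t)\|_{L^2(\R^3)}^{2}=e_{\epsilon}(t)$; the desired bounds $\|v^{\epsilon}(0)\|_{L^2}\le\epsilon$ and $\|v^{\epsilon}(t)\|_{L^2}\ge\epsilon^{-1/4}$ on $[3\epsilon^{1/2},5\epsilon^{1/2}]$ are immediate consequences.

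The inductive set-up is the analogue of \eqref{NSR}: at step $q$ the triple $(v_q,P_q,\RR_q)$ solves
\begin{equation*}
\partial_t v_q-\Delta v_q+\Div(v_q\otimes v_q)+x_2\partial_1 v_q+(v_q^{(2)},0,0)+\nabla P_q=\Div\RR_q,\qquad \Div v_q=0,
\end{equation*}
where $v_q^{(2)}$ is the second component of $v_q$, together with the analogues of \eqref{uq-tigh}--\eqref{et} and the splitting $v_q=v_q^{\textup{loc}}+v_q^{\textup{non-loc}}$ with $\spt_x v_q^{\textup{loc}},\spt_x\RR_q\subseteq\Omega_q\subseteq[-\tfrac12,\tfrac12]^3$. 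The mollification step and the construction of the building-block perturbations $w_{q+1}^{(p)},w_{q+1}^{(c)},w_{q+1}^{(t)},w_{q+1}^{(tc)}$ carry over verbatim, since they all live inside $\Omega_{q+1}$ where the multiplier $x_2$ is bounded by $\tfrac12$, so the extra linear terms $x_2\partial_1(\cdot)$ and $((\cdot)^{(2)},0,0)$ only contribute lower-order quantities that can be absorbed into the amplitude-type error estimates of Propositions~\ref{def-F2} and \ref{F1}.

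The localized corrector $w_{q+1}^{(\textup{ns})}$ now has to solve a modified Cauchy problem in which the parabolic operator on the left-hand side of \eqref{e:wt} is augmented by $x_2\partial_1(\cdot)+((\cdot)^{(2)},0,0)$, and in which the forcing becomes $\widetilde F_{q+1}=F_{q+1}+G_{q+1}$ with $G_{q+1}$ collecting the new linear contributions $x_2\partial_1 w_{q+1}^{\textup{loc}}$, $((w_{q+1}^{\textup{loc}})^{(2)},0,0)$, $x_2\partial_1 v_{\ell_q}^{\textup{non-loc}}$, and $((v_{\ell_q}^{\textup{non-loc}})^{(2)},0,0)$ coming from the expansion around the Couette background. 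The first two are supported in $\Omega_{q+1}$ and small in $\widetilde L^{\infty}_t B^{-3/2}_{2,1}$ by Proposition~\ref{estimate-wq+1}; the last two are controlled by the inductive smallness of $v_{\ell_q}^{\textup{non-loc}}$ after a localization-in-$x_2$ decomposition that uses the finite time horizon $T=5\epsilon^{1/2}$.

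The main obstacle is the Cauchy theory for this modified Navier--Stokes--Couette equation, because the unbounded coefficient $x_2$ destroys the direct heat-semigroup fixed point of Proposition~\ref{wtq-H3}. I would handle this by replacing $e^{t\Delta}\mathcal{P}$ in the Duhamel formula \eqref{inte-wtq} with the explicit Couette--Stokes semigroup $S_{\textup{Cou}}(t)=e^{t(\Delta-x_2\partial_1)}\mathcal{P}$, whose Fourier representation is a time-dependent shift in the first frequency variable combined with an anisotropic heat smoothing. Since $t\le 5\epsilon^{1/2}$ stays bounded, $S_{\textup{Cou}}(t)$ is bounded on $\widetilde L^{\infty}_t B^{1/2}_{2,1}$ with a universal constant, which allows the fixed-point argument to run exactly as in Proposition~\ref{wtq-H3} and yield $\|w_{q+1}^{(\textup{ns})}\|_{\widetilde L^{\infty}_t B^{1/2}_{2,1}}\lesssim\lambda_q^{-20}$. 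The remaining Reynolds-stress, support, and energy-gap estimates follow exactly as in Section~\ref{proof-1}, and passing to the limit $q\to\infty$ produces the weak solution $v^{\epsilon}$ of \eqref{couette equation} with the prescribed energy, completing the proof.
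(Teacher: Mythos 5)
Your treatment of the unbounded coefficient coincides with the paper's: Proposition \ref{posed-COU} passes to the tilted variables $\bar{x}_1=x_1-tx_2$, checks that the symbol of the resulting semigroup is dominated by $e^{-\frac34 t|\xi|^2}$ for $t\le\frac14$, and verifies equivalence of the Besov norms in the two coordinate systems --- exactly your ``Couette--Stokes semigroup'' step --- and the extra linear terms $x_2\partial_1(\cdot)$, $((\cdot)^{(2)},0,0)$ are indeed absorbed as lower-order contributions once written in divergence form on the compact support. The genuine gap is in your very first step, the choice of the energy profile. You propose to prescribe $\|v^{\epsilon}(t)\|_{L^2}^2=e_{\epsilon}(t)$ with $e_{\epsilon}(0)\le\tfrac14\epsilon^2$ and $e_{\epsilon}\ge 2\epsilon^{-1/2}$ on $[3\epsilon^{1/2},5\epsilon^{1/2}]$ and then run the scheme of Section \ref{proof-1} verbatim. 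But that scheme requires the energy gap to satisfy $\delta_{q+1}\le e(t)-\int|v_q|^2\,\dd x\le 3\delta_{q+1}$ \emph{uniformly in $t$} at every step (see \eqref{et}); at the base step with $v_1=0$ this forces $\delta_2\le e(t)\le 3\delta_2$ for all $t$, i.e.\ the profile must be comparable to the single constant $\delta_2$ (this is the normalization $c\,\delta_2<e(t)<\tfrac3c\delta_2$ imposed in Section \ref{para}). Your profile ranges between $\sim\epsilon^{2}$ and $\sim\epsilon^{-1/2}$, a ratio of order $\epsilon^{-5/2}$ that blows up as $\epsilon\to0$, so no admissible choice of $a$ (hence of $\delta_2$) makes the induction start, and the asserted ``immediate consequence'' does not follow.

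The paper's resolution is to \emph{not} prescribe the energy near $t=0$. It takes $e(t)=t/\epsilon$ and $\delta_q=\epsilon^{-1/2}\lambda_2^{2\beta}\lambda_q^{-2\beta}$, so that $e\sim\delta_2$ on the window where the lower bound is needed; it imposes the energy-gap condition \eqref{et-vq} only for $t\in[3\epsilon^{1/2}-\lambda_q^{-\alpha},5\epsilon^{1/2}+\lambda_q^{-\alpha}]$; and it adds the temporal support condition \eqref{spt-RR-q}, $\spt_t(v_q^{\textup{loc}},\RR_q)\subseteq[\epsilon^{1/2}+\lambda_q^{-\alpha},6\epsilon^{1/2}-\lambda_q^{-\alpha}]$, enforced through a space--time cutoff $\widetilde\eta_q(t,x)$ in the amplitudes together with a further cutoff $\zeta(t)$ and a modified $\widetilde\rho_q$ interpolating between $\delta_{q+1}$ and the energy-pumping value $\bar\rho_q$. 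Smallness of $v^{\epsilon}(0)$ then comes for free: at $t=0$ the local part vanishes identically and only the non-local corrector (which has zero initial data and stays of size $\lambda_q^{-20}$) survives. Your sketch contains none of these temporal cutoffs; once the induction is set up with them, the remainder of your outline goes through essentially as in the paper.
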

We will reduce  Proposition  \ref{Couette1} to Proposition \ref{Couette-ite} by induction on $(v_q, P_q, \RR_q)$ below. Let $(v_q, P_q, \RR_q)$ obey the following system:
\begin{equation}\label{C-vq}
\left\{ \begin{alignedat}{-1}
&\del_t v_q-\Delta v_q+(v_q\cdot\nabla) v_q+x_2\partial_1v_q+(v_q^2, 0,0)  +\nabla P_q  = \Div \RR_q, &{\rm in}\quad {\mathbb{R}^3}\times [0,T],
 \\
&  \nabla \cdot v_q  = 0, &{\rm in}\quad  {\mathbb{R}^3}\times[0,T].
\end{alignedat}\right.
\end{equation}

Suppose that the constants $b, \beta, \alpha$ are consistent with these in \eqref{b-beta}. Given a small enough constant $\epsilon>0$, let $a$ be a large number  depending on $b,\beta,\alpha, \epsilon$. We define
\begin{align}
    \lambda_q :=   a^{b^q},  \quad \delta_q &:= \epsilon^{-1/2}\lambda_2^{2\beta}\lambda_q^{-2\beta}, \quad \ell_q:=\lambda^{-60}_q,\quad q\ge 0.
\end{align}
Here we choose $a$ large enough such that $\delta_q\ll\epsilon^{-\frac{1}{2}}\ll\lambda^{\alpha}_1 $ for $q\ge 3$. Let $e(t)=\frac{t}{\epsilon}$ and $\Omega_q$ be defined in \eqref{omega}.  The estimates we propagate inductively  are: for large enough constant $C_0$ and $q\ge 1$,
\begin{align}
&v_q=\vql+\vqnl,\quad\|v_q(0, \cdot)\|_{L^2(\R^3)}\leq \epsilon,\label{e:vq}\\
&\|\vql\|_{L^\infty_t L^2}\leq\sum_{j=0}^{q-1}C_0\delta^{1/2}_{j+1},\quad
\|\vqnl\|_{\widetilde L^{\infty}_tB^{1/2}_{2,1} \cap \widetilde L^{1}_tB^{5/2}_{2,1} } \le \sum_{j=0}^{q-1}\lambda^{-1}_{j} ,\label{e:vq-L2}\\
&\|(\vql,~\vqnl)\|_{L^\infty_t H^5}\leq \lambda^{10}_q  ,
    \\
    & \| \RR_q \|_{L^\infty_t L^1}\le \delta_{q+1}\lambda_q^{-4 \alpha} , \quad \| \RR_q \|_{L^{\infty}_tW^{5,1} }\le  \lambda_q^{10},
    \label{e:RR-q}
    \\
  &   {\spt_{x}}  (\vql, \RR_q)  \subseteq \Omega_q,
    \qquad {\spt_{t}}  (\vql, \RR_q)  \subseteq [\epsilon^{1/2}+\lambda^{-\alpha}_{q },6\epsilon^{1/2}-\lambda^{-\alpha}_{q}], \label{spt-RR-q}\\
    & \delta_{q+1}\leq e(t)-\int_{\mathbb{R}^3}|v_q|^2dx \leq 3\delta_{q+1},  \quad t\in [3\epsilon^{1/2}-\lambda^{-\alpha}_{q },5\epsilon^{1/2}+\lambda^{-\alpha}_{q}]  .\label{et-vq}
\end{align}

\begin{proposition}\label{Couette-ite} Let $T>0$. Assume that $(v_q,P_q,\RR_q )$ solves
\eqref{C-vq} and satisfies  \eqref{e:vq}--\eqref{et-vq},
then there exists a solution $ (v_{q+1},  P_{q+1}, \RR_{q+1} )$, satisfying \eqref{e:vq}--\eqref{et-vq} with $q$ replaced by $q+1$, and such that
\begin{align}
        \|v_{q+1} - v_q\|_{L^\infty_tL^2} &\leq  C_0\delta_{q+1}^{1/2}.
\end{align}
\end{proposition}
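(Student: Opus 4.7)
The plan is to transplant the two-step mollification$+$perturbation scheme of Proposition \ref{iteration} to the Couette-perturbed system \eqref{C-vq}, the structurally new ingredients being (i) the unbounded-coefficient transport $x_2\partial_1 v_q$, (ii) the lift-up term $(v_q^2,0,0)$, (iii) the time-dependent energy prescription $e(t)=t/\epsilon$, and (iv) the compact temporal support window, which in particular forces $v_q(0,\cdot)\equiv 0$ and hence takes care of the initial-data condition in \eqref{e:vq} automatically. First I would mollify $(v_q,P_q,\RR_q)$ in space and time at scale $\ell_q$ as in \eqref{def-ulq}--\eqref{def-Rem}. Because $\spt_x\vql\subseteq\Omega_q\subseteq[-\tfrac12,\tfrac12]^3$, the coefficient $x_2$ is pointwise bounded by $\tfrac12$ on that support, so both the commutator $(x_2\partial_1 v_q)*\psi_{\ell_q}*\varphi_{\ell_q}-x_2\partial_1 v_{\ell_q}$ and the lift-up commutator $(v_q^2,0,0)*\psi_{\ell_q}*\varphi_{\ell_q}-((v_{\ell_q})^2,0,0)$ can be absorbed into an enlarged $\Rem$ at the same order as in Proposition \ref{p:estimates-for-mollified}.

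Next I would introduce a temporal cutoff $\tau_q\in C^\infty_c(\R)$ identically $1$ on $[\epsilon^{1/2}+\lambda_{q+1}^{-\alpha},6\epsilon^{1/2}-\lambda_{q+1}^{-\alpha}]$ and supported in $[\epsilon^{1/2}+\tfrac12\lambda_q^{-\alpha},6\epsilon^{1/2}-\tfrac12\lambda_q^{-\alpha}]$, with $\|\partial_t^N\tau_q\|_{L^\infty}\lesssim\lambda_q^{N\alpha}$, and define $\wqloc=\tau_q(\wpq+\wcq+\wttq+\wttqc)$ via the box flows and amplitudes of Section \ref{proof-1}. The amplitudes $a_{(k,q)}$ would be given by \eqref{def-akq} with $\rho_q(t)$ now chosen so that $\wpq$ restores $\int_{\R^3}|v_q|^2\dd x$ to $e(t)=t/\epsilon$ on the inner time window; this is feasible because the inductive gap \eqref{et-vq} is of size $\delta_{q+1}$. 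Since $\partial_t^N\tau_q$ contributes at most $\lambda_q^{N\alpha}\ll\lambda_{q+1}^{1/16}$, Propositions \ref{cr1/2}--\ref{estimate-wq+1} transfer verbatim and yield the analogues of \eqref{estimate-wp}--\eqref{estimate-w} for $\wqloc$.

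The non-local corrector $\wtq$ would then be defined as the unique mild solution, started from $\wtq|_{t=0}=0$, of the linearized Couette-perturbed system
\begin{equation*}
\left\{\begin{alignedat}{-1}
&\partial_t\wtq-\Delta\wtq+x_2\partial_1\wtq+(\wtq^2,0,0)+\Div(\wtq\ootimes\wtq)+\Div(\wtq\ootimes\vqnl)\\
&\qquad+\Div(\vqnl\ootimes\wtq)+\nabla p^{(\textup{ns})}_{q+1}=F_{q+1},\qquad\Div\wtq=0,
\end{alignedat}\right.
\end{equation*}
where $F_{q+1}$ collects $-\Div\Rem$, $-\partial_t\wttqc$, the high-frequency residuals $-F^{(1)}_{q+1}-F^{(2)}_{q+1}$ from Propositions \ref{def-F2}--\ref{F1} produced by feeding $\wqloc$ into \eqref{C-vq}, and the new Couette-generated terms $-x_2\partial_1\wqloc-(\wqloc^2,0,0)$. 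Because $\spt_x\wqloc\subseteq\Omega_{q+1}$, the coefficient $x_2$ is bounded by $\tfrac12$ on that support, so these last two terms inherit the $\widetilde L^\infty_tB^{-3/2}_{2,1}$ and $L^\infty_tH^4$ bounds of $\partial_1\wqloc$ and $\wqloc$ themselves, yielding a direct analogue of Proposition \ref{est-Fq}. Setting $v^{\textup{loc}}_{q+1}=\vlql+\wqloc$, $v^{\textup{non-loc}}_{q+1}=\vqnl+\wtq$ and $v_{q+1}=v^{\textup{loc}}_{q+1}+v^{\textup{non-loc}}_{q+1}$, one reads off $\RR_{q+1}$ from the analogue of \eqref{def-R-q+1} and verifies the inductive estimates at level $q+1$ exactly as in Section \ref{checkq+1}, with the cutoff $\tau_q$ delivering the temporal support condition in \eqref{spt-RR-q}.

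The main obstacle is the well-posedness and smallness of $\wtq$ in $\widetilde L^\infty_tB^{1/2}_{2,1}\cap\widetilde L^1_tB^{5/2}_{2,1}$, since the unbounded coefficient $x_2$ on $\R^3$ rules out the direct heat-semigroup Duhamel argument of Proposition \ref{wtq-H3}. To circumvent this I would change variables to the Lagrangian Couette frame $y_1=x_1-tx_2$, $y_2=x_2$, $y_3=x_3$, which kills the transport term and turns $-\Delta_x$ into the sheared strictly elliptic operator $-(1+t^2)\partial_{y_1}^2+2t\partial_{y_1}\partial_{y_2}-\partial_{y_2}^2-\partial_{y_3}^2$; on the short window $[0,6\epsilon^{1/2}]$ the shearing distortion is $O(\epsilon^{1/2})\ll 1$, so the Littlewood-Paley blocks in the new coordinates are equivalent to the original ones with a universal constant and the maximal regularity of the sheared heat semigroup holds uniformly, allowing the contraction argument of Proposition \ref{wtq-H3} to be run verbatim. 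The lift-up term $(\wtq^2,0,0)$ is a bounded linear perturbation absorbed into the fixed point. The remaining analogues of Proposition \ref{estimate-wq+1}, Proposition \ref{R-q+1}, and Proposition \ref{est-E} then follow line by line from Section \ref{proof-1}.
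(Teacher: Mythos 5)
Your proposal follows essentially the same route as the paper: mollification with the Couette commutators absorbed into $\Rem$, the box-flow perturbation localized in time to the window $[\epsilon^{1/2},6\epsilon^{1/2}]$ (the paper builds the cutoff into the amplitude as a space-time function $\widetilde{\eta}_q(t,x)$ rather than multiplying the perturbation externally, and interpolates $\widetilde{\rho}_q$ to $\delta_{q+1}$ off the energy window via a second cutoff $\zeta(t)$), and the localized corrector $\wtq$ solving the Couette-linearized system, made well-posed exactly as you suggest by passing to the sheared frame $\bar{x}_1=x_1-tx_2$ and using the short-time equivalence of Littlewood--Paley blocks (Proposition \ref{posed-COU}). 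One slip: as written your $\tau_q$ is required to equal $1$ on a set strictly larger than its support --- the inclusions should be reversed, with $\spt\tau_q$ contained in the $\lambda_{q+1}^{-\alpha}$-window and $\tau_q\equiv 1$ on the $\tfrac12\lambda_q^{-\alpha}$-window containing $\spt_t\RR_{\ell_q}$, which is what the cancellation of $\Div\RR_{\ell_q}$ against the low-frequency part of $\tau_q^2\,\wpq\otimes\wpq$ requires.
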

\begin{proof}As a matter of fact, the proof of Proposition \ref{Couette-ite} is similar to that of Proposition~\ref{iteration}. For the sake of brevity, we outline the main points of the proof.\\
\textbf{Step 1: Mollification}.\,\, Following the mollification process as shown in Section 4.1, we define $(v_{\ell_q},  P_{\ell_q}, \RR_{\ell_q})$ by mollifying  $(v_q, P_q, \RR_q)$. Then the estimates  \eqref{supp-vlq}, and \eqref{e:v_ell-vq}--\eqref{e:R_rem} in Proposition~\ref{p:estimates-for-mollified} hold. Moreover,
  \begin{align*}
      &{\spt_{t}}  (\vlql, \RR_{\ell_q})  \subseteq [\epsilon^{1/2}+\tfrac{1}{2}\lambda^{-\alpha}_{q },6\epsilon^{1/2}-\tfrac{1}{2}\lambda^{-\alpha}_{q}],
     \end{align*}
     and
     \begin{align*}
      & \frac{\delta_{q+1}}{2}\leq e(t)-\int_{\mathbb{R}^3}|v_{\ell_q}|^2dx \leq 4\delta_{q+1},  \quad t\in [3\epsilon^{1/2}-\lambda^{-\alpha}_{q },5\epsilon^{1/2}+\lambda^{-\alpha}_{q}].
  \end{align*}
\textbf{Step 2: Perturbation. }  Firstly, the amplitude function $\widetilde{a}_{(k,q)}$ is defined by
 \begin{equation*}
      \widetilde{a}_{(k,q)}(t,x)=\widetilde{\eta}_{q}(t,x)
 a_k\Big({\rm Id}-\frac{\RR_{\ell_{q}}}{\chi_{q}\widetilde{\rho}_q}\Big)
 (\chi_q \widetilde{\rho}_q)^{1/2},
 \end{equation*}
 where $a_k, \chi_q$ are consistent with that in \eqref{def-akq}, $\widetilde{\eta}_{q}(t,x)\in C^\infty_c(\R\times \R^3; [0,1])$ satisfies that
  \begin{align*}
&{\rm(i)}\,\spt_x \widetilde{\eta}_{q}\subseteq\Omega_{q+1},\quad    \spt_t \widetilde{\eta}_{q}\subseteq [\epsilon^{1/2}+\lambda^{-\alpha}_{q+1},6\epsilon^{1/2}-\lambda^{-\alpha}_{q+1}],\\
&{\rm(ii)}\,\widetilde{\eta}_{q}(t,x)\equiv 1, \,\, \text{if}\,\,t\in [\epsilon^{1/2}+\tfrac{1}{2}\lambda^{-\alpha}_{q },6\epsilon^{1/2}-\tfrac{1}{2}\lambda^{-\alpha}_{q}], x\in \Omega_q+[-\lambda^{-1}_q, \lambda^{-1}_q]^3,\\
&{\rm(iii)}\,|\partial^M_t\partial^N_x\widetilde{\eta}_q|\lesssim \lambda^{N+M}_q,\,\,\forall\,N,M\ge 0.
   \end{align*}
Choose the cutoff function $\zeta(t)\in C^\infty_c(\R; [0,1])$ such that
\begin{align*}
\,\,\spt \,\zeta=[3\epsilon^{1/2}-\lambda^{-\alpha}_{q },5\epsilon^{1/2}+\lambda^{-\alpha}_{q}], \quad \zeta(t)\equiv 1, \,\text{for}\,\, t\in [3\epsilon^{1/2}-\lambda^{-\alpha}_{q+1},5\epsilon^{1/2}+\lambda^{-\alpha}_{q+1}].
\end{align*}
We define $\widetilde{\rho}_q$ by
 \[\widetilde{\rho}_q=\delta_{q+1}(1-\zeta(t))+\zeta(t)\bar{\rho}_q,\]
where $\bar{\rho}_q$ is defined by replacing $\eta_q$ with  $\widetilde{\eta}_q$ in \eqref{energy2}.

Secondly, we define $v_{q+1}=v_{\ell_q}+w_{q+1}$, where  the perturbation $w_{q+1}$ is given by
 \begin{align*}
     w_{q+1}=\wpq+\wcq+\wttq+\wttqc+\wtq.
 \end{align*}
 Here  $\wpq$, $\wcq$, $\wttq$ and $\wttqc$ are defined by replacing $a_{(k,q)}$ with  $\widetilde{a}_{(k,q)}$ in \eqref{def-wpq}--\eqref{wttqc}, respectively. Therefore, one deduces from Proposition \ref{def-F2} and Proposition \ref{F1} that the error term $F_{q+1}$ defined by \eqref{def-F_{q+1}} satisfies that
 \begin{equation}\label{F-S}
 \|F_{q+1}\big\|_{\widetilde L^{\infty}_tB^{-3/2}_{2,1} }\lesssim \lambda^{-40}_{q}.
 \end{equation}
We define $\wtq$ on $[0, 6\epsilon^{1/2}]$ by solving the the following system
\begin{equation}
\left\{ \begin{alignedat}{-1}
&\del_t \wtq-\Delta \wtq+\Div (\wtq\ootimes \wtq)+\Div(\wtq\ootimes\vlqnl)\\
&+\Div(\vlqnl\ootimes \wtq) +x_2\partial_1\wtq+((\wtq)^2, 0,0) +\nabla p^{(\textup{ns})}_{q+1}  = F_{q+1},
\\
 & \Div \wtq = 0,
  \\
  & \wtq |_{t=0}=  0.
\end{alignedat}\right.
 \label{e:wtq-C}
\end{equation}
Under the condition \eqref{F-S}, Proposition \ref{posed-COU} implies that  there exists a $\wtq$ on  $[0, 6\epsilon^{1/2}]$ such that
\begin{equation}
    \|\wtq\|_{\widetilde L^{\infty}_tB^{1/2}_{2,1} }\le \lambda^{-20}_q.
\end{equation}

Finally, by defining $v_{q+1}=v_q+w_{q+1}$, it is not difficult to establish the estimates as shown in Proposition ~ \ref{estimate-wq+1} and  Proposition~ \ref{R-q+1}, and \eqref{et-vq} holds at $q+1$ level.  Hence we finish the proof of Proposition \ref{Couette-ite}.

 \end{proof}

\appendix

\section{}\label{App}
In this section, we compile several useful tools {including geometric Lemma, an improved H\"{o}lder inequality, the definitions of mollifiers and Lerner-Chemin
space, the inverse divergence iteration step, the local well-posedness of a perturbation system of the Navier-Stokes equations near the Couette flow. }
\begin{lemma}[Geometric Lemma \cite{2Beekie}]\label{first S}Let $B_{\sigma}({\rm Id})$ denote the ball of radius $\sigma$ centered at $\rm Id$ in the space of $3\times3$ symmetric matrices.
There exists a set $\Lambda\subset\mathbb{S}^2\cap\mathbb{Q}^3$ that consists of vectors $k$ with associated orthonormal basis $(k,\bar{k},\bar{\bar{k}}),~\epsilon>0$ and smooth function $a_{k}:B_{\epsilon}(\rm Id)\rightarrow\mathbb{R}$ such that, for every positive definite symmetric matrix $R\in B_{\epsilon}(\rm Id)$, we have the following identity:
$$R=\sum_{k\in\Lambda}a^2_{k}(R)\bar{k}\otimes\bar{k}.$$
\end{lemma}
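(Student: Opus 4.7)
The plan is to build the decomposition in three stages: first produce a reference decomposition at the identity, then extend smoothly by the implicit function theorem, and finally ensure the chosen directions can be taken in $\mathbb{S}^2 \cap \mathbb{Q}^3$.

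First I would choose a finite collection $\Lambda\subset\mathbb{S}^2\cap\mathbb{Q}^3$ (together with an orthonormal triple $(k,\bar{k},\bar{\bar{k}})$ for each $k$) so that the symmetric tensors $\{\bar{k}\otimes\bar{k}\}_{k\in\Lambda}$ span the $6$-dimensional space $\mathcal{S}$ of symmetric $3\times 3$ matrices. A concrete way to do this is to start from standard symmetric families used in convex integration (for example, taking $k$'s in the directions of suitably rotated canonical basis vectors with associated $\bar{k}$'s of the form $(p,q,r)/\sqrt{p^2+q^2+r^2}$ with $p,q,r\in\mathbb{Z}$), observing that because $\bar{k}\otimes\bar{k}$ depends continuously on $\bar{k}$, a dense rational family inside any open subset of $\mathbb{S}^2$ still contains enough vectors to span $\mathcal{S}$. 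Once spanning is achieved, averaging the family so that the configuration is invariant under a large enough subgroup of $O(3)$ forces the sum $\sum_{k\in\Lambda}\bar{k}\otimes\bar{k}$ to be a scalar multiple of $\mathrm{Id}$, yielding a decomposition $\mathrm{Id}=\sum_{k\in\Lambda}\gamma_k^2\,\bar{k}\otimes\bar{k}$ with all coefficients $\gamma_k>0$.

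Next I would upgrade this single identity to a smooth family indexed by $R$ in a neighborhood of $\mathrm{Id}$. Because $\{\bar{k}\otimes\bar{k}\}_{k\in\Lambda}$ spans $\mathcal{S}$, the linear map $L:\mathbb{R}^{|\Lambda|}\to\mathcal{S}$, $(c_k)\mapsto\sum c_k\,\bar{k}\otimes\bar{k}$, admits a linear right inverse $M:\mathcal{S}\to\mathbb{R}^{|\Lambda|}$. Define $c_k(R):=\gamma_k^2+[M(R-\mathrm{Id})]_k$; then $c_k(\cdot)$ is smooth, $c_k(\mathrm{Id})=\gamma_k^2>0$, and $\sum c_k(R)\,\bar{k}\otimes\bar{k}=R$ identically. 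Shrinking $\epsilon$ if necessary, each $c_k(R)$ remains strictly positive on $B_\epsilon(\mathrm{Id})$, so we may set $a_k(R):=\sqrt{c_k(R)}$, which is smooth on $B_\epsilon(\mathrm{Id})$ and satisfies the required identity $R=\sum_{k\in\Lambda} a_k^2(R)\,\bar{k}\otimes\bar{k}$.

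The only delicate point, and the one I would treat most carefully, is the rationality condition $\Lambda\subset\mathbb{S}^2\cap\mathbb{Q}^3$, since it is crucial for the periodicity of the building blocks later (it is used, for instance, in the choice of $N_\Lambda$ satisfying $N_\Lambda k\in\mathbb{Z}^3$). The idea is that $\mathbb{S}^2\cap\mathbb{Q}^3$ is dense in $\mathbb{S}^2$ (a classical fact via stereographic projection from rational points), and the spanning property and positivity of the reference coefficients are open conditions in $(\bar{k})_{k\in\Lambda}$. Therefore one may start from any symmetric real configuration achieving both properties and perturb each $\bar{k}$ slightly within $\mathbb{S}^2\cap\mathbb{Q}^3$ while keeping its associated basis rational; the spanning, positivity, and smooth-inversion conclusions persist, completing the construction.
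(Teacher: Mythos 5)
Your argument is essentially the standard proof of this type of geometric lemma and is correct in outline; note, however, that the paper does not prove the statement at all --- it cites it from Beekie--Buckmaster--Vicol \cite{2Beekie} and, in Remark \ref{Lambda}, simply exhibits an explicit admissible set $\Lambda$ built from the Pythagorean triple $(5,12,13)$. The substantive difference is in how the reference decomposition of $\mathrm{Id}$ is produced. You obtain it abstractly, by density of $\mathbb{S}^2\cap\mathbb{Q}^3$, openness of the spanning and positivity conditions, and a symmetrization step invoking invariance under a subgroup of $O(3)$; the route the paper relies on is to write down the six rational orthonormal triples of Remark \ref{Lambda} and check directly that $\sum_{k\in\Lambda}\bar{k}\otimes\bar{k}=2\,\mathrm{Id}$ and that the matrices $\bar{k}\otimes\bar{k}$ span the $6$-dimensional space of symmetric matrices, after which your Stage 2 (linear right inverse, affine coefficients $c_k(R)=\gamma_k^2+[M(R-\mathrm{Id})]_k$, square roots on a small ball) is exactly what one does. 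Your Stage 2 is correct as written. Two points in your Stages 1 and 3 deserve tightening. First, the symmetrization step needs the acting subgroup to be one that acts irreducibly on $\mathbb{R}^3$ \emph{and} preserves rationality (e.g.\ the group of signed permutation matrices), otherwise Schur's lemma does not apply or rationality is lost; as stated it is a gesture rather than an argument. Second, what is actually needed is not merely density of $\mathbb{S}^2\cap\mathbb{Q}^3$ but density of \emph{rational orthonormal frames} $(k,\bar{k},\bar{\bar{k}})$ in the frame bundle of $\mathbb{S}^2$ (a rational unit vector need not admit a rational orthogonal completion a priori); this is true, e.g.\ via the Cayley transform of rational skew-symmetric matrices, but it is the fact you should invoke. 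With those two points made precise your construction is complete; the explicit example has the advantage of avoiding both issues entirely.
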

\begin{remark}\label{Lambda}
  For instance, $\Lambda=\{\frac{5}{13}e_1\pm \frac{12}{13}e_2, \frac{12}{13}e_1\pm \frac{5}{13}e_3, \frac{5}{13}e_2\pm \frac{12}{13}e_3\}$ and $(k,\bar{k},\bar{\bar{k}})$ are as follows:
\begin{table}[ht]
\renewcommand\arraystretch{1.2}
\begin{tabular}{p{2cm}|p{2cm}|p{2cm}}
\toprule[1.2pt]
$\qquad k$& $\qquad {\bar{k}}$ & $\qquad {\bar{\bar{k}}}$\\\midrule[0.5pt]
$\frac{5}{13}e_1\pm \frac{12}{13}e_2$ &{$\frac{5}{13}e_1\mp\frac{12}{13}e_2 $}&\qquad  $e_3$ \\\hline
$\frac{12}{13}e_1\pm \frac{5}{13}e_3$&{$\frac{12}{13}e_1\mp \frac{5}{13}e_3$}& \qquad $e_2$\\\hline
$\frac{5}{13}e_2\pm \frac{12}{13}e_3$ &{$\frac{5}{13}e_2\mp \frac{12}{13}e_3$}& \qquad $e_1$\\\bottomrule[1.2pt]
\end{tabular}
\end{table}
\end{remark}

{Next, we give the temporal and spatial mollifiers  which is useful in Section 4.
\begin{definition}[Mollifiers]\label{e:defn-mollifier-t}Let nonnegative functions $\varphi(t)\in C^\infty_c(-1,0)$ and $\psi(x)\in C^\infty_c(B_1(0))$ be standard mollifying kernels such that $\int_{\R}\varphi(t)\dd t=\int_{\R^3}\psi(x)\dd x=1$.
For each $\epsilon>0$, we define  two sequences of  mollifiers as follows:
\begin{align*}
     \varphi_{\epsilon}(t)
        := \frac1{\epsilon} \varphi\left(\frac t\epsilon\right), \quad\psi_\epsilon(x)
            := \frac1{\epsilon^3} \psi\left(\frac{x}\epsilon\right).
\end{align*}
\end{definition}

\begin{lemma}[\cite{1Cheskidov, MoS}]\label{Holder}Assume that $d\ge1$, $1\le p\le \infty$ and $\lambda$ is a positive integer. Let $\Omega=\big[-\tfrac{1}{2}, \tfrac{1}{2}\big]^3\subseteq\R^3$ and smooth function $f$ support on $\Omega$. $g: \TTT^3\to\R$ is a smooth function and $\TTT^3=\R^3/(\ZZ)^3$. Then we have
\[\Big|\|fg(\lambda\cdot)\|_{L^p(\Omega)}-\|f\|_{L^p(\Omega)}\|g\|_{L^p(\Omega)}\Big|\lesssim \lambda^{-\frac{1}{p}}\|f\|_{C^1(\Omega)}\|g\|_{L^p(\TTT^d)}.\]
 \end{lemma}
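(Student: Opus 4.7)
The strategy is to reduce to an elementary quadrature estimate by raising to the $p$-th power. Since $\lambda$ is a positive integer, partition $\Omega=[-\tfrac12,\tfrac12]^3$ exactly into $\lambda^d$ axis-aligned cubes $\{C_j\}$ of side length $\lambda^{-1}$ with centers $x_j$. On each cube write
\[
\int_{C_j} |f(x)|^{p}|g(\lambda x)|^{p}\dd x
= |f(x_j)|^{p}\int_{C_j}|g(\lambda x)|^{p}\dd x + \int_{C_j}\bigl(|f(x)|^{p}-|f(x_j)|^{p}\bigr)|g(\lambda x)|^{p}\dd x.
\]
The first term is exact by periodicity: the change of variables $y=\lambda x$ and $\mathbb T^d$-periodicity of $|g|^p$ give $\int_{C_j}|g(\lambda x)|^p\dd x = \lambda^{-d}\|g\|_{L^p(\mathbb T^d)}^{p}$. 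Summing over $j$ and recognizing the midpoint Riemann sum for $\int_\Omega |f|^p\dd x$ yields
\[
\sum_{j}|f(x_j)|^{p}\lambda^{-d}\|g\|_{L^p(\mathbb T^d)}^{p}
= \|f\|_{L^p(\Omega)}^{p}\|g\|_{L^p(\mathbb T^d)}^{p} + R_{1},
\]
where $|R_{1}|\lesssim \lambda^{-1}\bigl\||f|^{p}\bigr\|_{C^{1}(\Omega)}\|g\|_{L^p(\mathbb T^d)}^{p}$ by the standard midpoint error on the unit cube. The second (local freezing) term is controlled pointwise by $\|\,|f|^{p}\|_{C^{1}}\,\lambda^{-1}$ on each $C_j$, producing an error $R_{2}$ with the same bound as $R_1$ after summing. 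Using the chain rule estimate $\bigl\||f|^{p}\bigr\|_{C^{1}(\Omega)}\lesssim \|f\|_{C^{0}}^{p-1}\|f\|_{C^{1}}$, one obtains
\[
\bigl|\|fg(\lambda\cdot)\|_{L^p(\Omega)}^{p}-\|f\|_{L^p(\Omega)}^{p}\|g\|_{L^p(\mathbb T^d)}^{p}\bigr|
\lesssim \lambda^{-1}\|f\|_{C^{0}}^{p-1}\|f\|_{C^{1}}\|g\|_{L^p(\mathbb T^d)}^{p}.
\]

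To pass from the $p$-th powers back to norms, apply the elementary subadditivity $|A^{1/p}-B^{1/p}|\le |A-B|^{1/p}$ valid for $A,B\ge 0$ and $p\ge 1$ (proved by $(B+h)^{1/p}\le B^{1/p}+h^{1/p}$, a consequence of concavity of $x\mapsto x^{1/p}$). This produces exactly the $\lambda^{-1/p}$ factor in the claim:
\[
\bigl|\|fg(\lambda\cdot)\|_{L^p(\Omega)}-\|f\|_{L^p(\Omega)}\|g\|_{L^p(\mathbb T^d)}\bigr|
\lesssim \lambda^{-1/p}\|f\|_{C^{0}}^{(p-1)/p}\|f\|_{C^{1}}^{1/p}\|g\|_{L^p(\mathbb T^d)},
\]
and the trivial bound $\|f\|_{C^0}^{(p-1)/p}\|f\|_{C^1}^{1/p}\le \|f\|_{C^1}$ finishes the argument. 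The endpoint $p=\infty$ is handled separately (the inequality is immediate since $\|fg(\lambda\cdot)\|_{L^\infty}\le \|f\|_{L^\infty}\|g\|_{L^\infty}$ and the right-hand side is $O(1)$).

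\textbf{Main obstacle.} The genuine content is getting the sharp dependence on $f$ (only one derivative, with the right power) while keeping $g$ controlled only in $L^p(\mathbb T^d)$. The two candidate pitfalls are (i) accidentally paying $\|g\|_{L^\infty}$ rather than $\|g\|_{L^p(\mathbb T^d)}$ in the freezing step, which is avoided by pulling $|g(\lambda x)|^p$ into the $L^1$ estimate on each $C_j$ and then using periodicity cube-by-cube, and (ii) losing the $\lambda^{-1/p}$ sharpness when taking $p$-th roots, which is handled cleanly by the subadditivity inequality above rather than by the mean value theorem applied to $x^{1/p}$. The fact that $\lambda\in\mathbb N$ is essential both for the partition to tile $\Omega$ exactly and for periodicity to yield an \emph{equality} (not approximation) in the localized integral of $|g(\lambda x)|^p$.
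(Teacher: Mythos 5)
Your proof is correct. The paper does not actually prove this lemma --- it only cites it to \cite{1Cheskidov, MoS} --- and your argument (exact tiling of $\Omega$ into $\lambda^d$ period cells, freezing $f$ cube-by-cube so that $g$ is only ever measured in $L^p(\TTT^d)$, a Lipschitz/midpoint bound on $|f|^p$, and the subadditivity $|A^{1/p}-B^{1/p}|\le |A-B|^{1/p}$ to recover the $\lambda^{-1/p}$ factor) is essentially the standard proof given in those references. The only cosmetic point worth noting is that for $p$ near $1$ the function $|f|^p$ need not be $C^1$ at zeros of $f$, but it is Lipschitz with constant $\lesssim p\|f\|_{C^0}^{p-1}\|f\|_{C^1}$, which is all your quadrature and freezing steps require.
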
}
 As is shown in \cite{BMNV}, Given a vector field $G$,a zero mean periodic function $\rho^{(0)}$, then we write $G\rho^{(0)}(\lambda x)$ as the divergence of asymmetric tensor up to a pressure term and a error term.
\begin{lemma}[Inverse divergence iteration step \cite{BMNV}]\label{tracefree}
 Let $\lambda\in \NN^{+}$ and $\{\rho^{(n)}\}_{0\leq n\leq N}$ be the zero mean smooth $\TTT^3$-periodic functions such that $\rho^{(n)}=\Delta\rho^{(n+1)}$. Then for any given vector field $G$, we have
 \begin{align}\label{itra-equ}
  G^i\rho^{(0)}(\lambda x)=\partial_j \mathring{R}^{ij}_{(0)}+\partial_iP_{(0)}+E^i_{(0)} ,
 \end{align}
 where the traceless symmetric stress $\mathring{R}_{(0)}$ is given by
 $$\mathring{R}^{ij}_{(0)}={\lambda^{-1}}\big(G^i\delta_{jl}+G^j\delta_{il}-\delta_{jk}\delta_{ik}G^l\big)(\partial_{ {l}}\rho^{(1)})\circ{(\lambda x)}=:
 \lambda^{-1}A^{(0)}_{ijl}(G)\big(\frac{\partial_{ {l}}\rho^{(0)}}{\Delta}\big)\circ{(\lambda x)},$$
 where the pressure term is given by
 $$P_{(0)}=\lambda^{-1}(2G^j\delta_{jl}- \delta_{jk} \delta_{jk}G^l)(\partial_{ {l}}\rho^{(1)})\circ{(\lambda x)}=:\lambda^{-1}B^{(0)}_l(G)\big(\frac{\partial_{ {l}}\rho^{(0)}}{\Delta}\big)\circ{(\lambda x)},$$
 and the error term $E^i$ is given by
  $$E^{i }_{(0)}=\lambda^{-1}\big(\partial_j(G^l\delta_{ik}\delta_{jk}-G^j\delta_{ik}\delta_{lk})-\partial_j G^iA^j_{ l} \big)(\partial_{ {l}}\rho^{(1)})\circ{(\lambda x)}:=
 \lambda^{-1}C^{(0)}_{il}(G)\big(\frac{\partial_{ {l}}\rho^{(0)}}{\Delta}\big)\circ{(\lambda x)}.$$
Moreover, applying the relation \eqref{itra-equ} to the error term $E^i$ at each step for $N$ times, one immediately shows that
\begin{align}\label{itra-equ}
  G^i\rho^{(0)}(\lambda x)=\partial_j \mathring{R}^{ij}_{(N)}+\partial_iP_{(N)}+E^i_{(N)} ,
 \end{align}
where
\begin{align*}
 &\RR^{ij}_{(N)}=  \sum_{n=1}^{N} \lambda^{-n}A^{(n)}_{ij\alpha_n}(G)\big(\frac{\partial^{\alpha_n}}{{\Delta}^n}\rho^{(0)}\big)\circ{(\lambda x)},\\
 &P_{(N)}=\sum_{n=1}^{N}\lambda^{-n}
 B^{ (n)}_{\alpha_n}(G)\big(\frac{\partial^{\alpha_n}}{{\Delta}^n}\rho^{(0)}\big)\circ{(\lambda x)},\\
 &E^i_{(N)}=\lambda^{-N}C^{(N)}_{i\alpha_N}(G) \big(\frac{\partial^{\alpha_N}}{{\Delta}^N}\rho^{(0)}\big)\circ{(\lambda x)}
\end{align*}
Here $A^{(n)}_{ij\alpha_n}(G)$ and  $B^{ (n)}_{\alpha_n}(G)$ are composed of the $n-1$th derivative of $G$ and $C^{(N)}_{i\alpha_n}$ is formed from the $N$th derivative of $G$. Particularly, if $\spt_x G\subseteq\Omega\subseteq\big[-\tfrac{1}{2}, \tfrac{1}{2}\big]^3$, we have
\[\spt_x \RR_{(N)}=\spt_x G.\]
 \end{lemma}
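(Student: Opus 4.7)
\textbf{Proof proposal for Lemma~\ref{tracefree}.}

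The plan is to establish the base identity $G^i\rho^{(0)}(\lambda x)=\partial_j\mathring R^{ij}_{(0)}+\partial_iP_{(0)}+E^i_{(0)}$ by direct verification, and then to iterate it. The starting point is the chain-rule identity $(\partial_k\rho^{(1)})(\lambda x)=\lambda^{-1}\partial_k[\rho^{(1)}(\lambda x)]$, which combined with $\rho^{(0)}=\Delta\rho^{(1)}$ gives $\rho^{(0)}(\lambda x)=\lambda^{-2}\Delta_x[\rho^{(1)}(\lambda x)]$. Multiplying by $G^i$ and applying the product rule to move one derivative onto $G$, I obtain
\[
G^i\rho^{(0)}(\lambda x)=\lambda^{-1}\partial_k\!\left[G^i(\partial_k\rho^{(1)})(\lambda x)\right]-\lambda^{-1}(\partial_k G^i)(\partial_k\rho^{(1)})(\lambda x).
\]
The divergence term is not yet of the desired symmetric trace-free form, so I would symmetrize $G^i(\partial_k\rho^{(1)})\mapsto \tfrac12[G^i(\partial_k\rho^{(1)})+G^k(\partial_i\rho^{(1)})]$ and remove the trace; the antisymmetric piece and the trace, after the divergence $\partial_k$ acts, reorganize into $\partial_i P_{(0)}$ plus a lower-order term absorbed into $E^i_{(0)}$. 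Reading off coefficients yields exactly the tensor $A^{(0)}_{ijl}(G)=G^i\delta_{jl}+G^j\delta_{il}-\delta_{jk}\delta_{ik}G^l$, the scalar $B^{(0)}_l(G)=2G^j\delta_{jl}-\delta_{jk}\delta_{jk}G^l$, and the remainder $C^{(0)}_{il}(G)$ of the stated form. The verification reduces to checking the identity
\[
\partial_j\mathring R^{ij}_{(0)}+\partial_i P_{(0)}+E^i_{(0)}=G^i\rho^{(0)}(\lambda x)
\]
by a straightforward chain-rule calculation, using $\partial_j\partial_j\rho^{(1)}=\rho^{(0)}$ to extract the main term and cancelling the symmetric mixed second derivatives against each other.

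The iterated identity follows by applying the base step $N$ times. The error $E^i_{(0)}=\lambda^{-1}C^{(0)}_{il}(G)(\partial_l\rho^{(1)})(\lambda x)$ is again of the form ``(first derivative of $G$)$\,\times\,$(zero-mean periodic function evaluated at $\lambda x$)'', because $\partial_l\rho^{(1)}$ has zero mean as $\rho^{(1)}$ is periodic. Writing $\partial_l\rho^{(1)}=\Delta(\partial_l\rho^{(2)})$ and re-applying the base step to $E^i_{(0)}$ with $\rho^{(1)}$ in place of $\rho^{(0)}$ produces a new $\mathring R_{(1)}$, $P_{(1)}$, and an error $E^i_{(1)}$ carrying an additional factor $\lambda^{-1}$, at the cost of one more spatial derivative on $G$. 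After $N$ iterations, one collects $\partial_j\mathring R^{ij}_{(N)}+\partial_iP_{(N)}+E^i_{(N)}$ with coefficients formed from $n$-th derivatives of $G$ (for $n\le N-1$ in $A,B$ and $n=N$ in $C$) and the iterated antiderivatives $\partial^{\alpha_n}\rho^{(0)}/\Delta^n$ evaluated at $\lambda x$, exactly as in the statement.

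The support claim is immediate: every coefficient $A^{(n)}_{ij\alpha_n}(G)$, $B^{(n)}_{\alpha_n}(G)$, $C^{(N)}_{i\alpha_N}(G)$ is a pointwise polynomial in $G$ and its derivatives, so multiplication by the bounded periodic factor $(\partial^{\alpha_n}\rho^{(0)}/\Delta^n)(\lambda x)$ preserves $\spt_x G$; thus $\spt_x\mathring R_{(N)}\subseteq\spt_x G\subseteq\Omega$.

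The main obstacle, and the only genuinely delicate piece, is the decomposition at the \emph{first} step: correctly separating the divergence of the naive tensor $\lambda^{-1}G^i(\partial_j\rho^{(1)})(\lambda x)$ into a symmetric trace-free divergence, a gradient (pressure), and a small error, with exactly the coefficients that make the identity hold. Once this bookkeeping is done, the inductive step is a transcription: one simply reapplies the base identity to $E^i_{(n)}$, which keeps the same algebraic structure because $\partial_l\rho^{(n)}$ inherits zero mean from $\rho^{(n)}$. The proof is therefore essentially an explicit algebraic calculation, not a hard analytic argument.
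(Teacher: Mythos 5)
The paper gives no proof of this lemma at all --- it is quoted from \cite{BMNV} as a black box --- so there is no internal argument to compare against; your direct-verification-plus-iteration scheme is precisely the standard proof of the inverse-divergence iteration step in that reference, and the outline is sound. The key steps you identify are the right ones: the scaling identity $\rho^{(0)}(\lambda x)=\lambda^{-2}\Delta_x\big[\rho^{(1)}(\lambda x)\big]$, one product-rule integration by parts trading a fast derivative for a factor $\lambda^{-1}$ and a derivative on $G$, symmetrization and trace removal feeding the pressure, and re-application of the base identity to the error, which keeps the same algebraic structure because $\partial_l\rho^{(1)}=\Delta\,\partial_l\rho^{(2)}$ is again a zero-mean periodic function. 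Two caveats. First, the base step is asserted rather than computed, and as printed in the paper the formulas cannot literally be matched: the tensor $A^{(0)}_{ijl}(G)=G^i\delta_{jl}+G^j\delta_{il}-\delta_{ij}G^l$ gives $\mathring R^{ii}_{(0)}=-\lambda^{-1}G^l(\partial_l\rho^{(1)})(\lambda x)\neq 0$, so the displayed stress is \emph{not} traceless (the trace-free version carries $-\tfrac23\delta_{ij}G^l$, with the compensating $\tfrac23 G^l\partial_l\rho^{(1)}$ absorbed into $P_{(0)}$), and $E^i_{(0)}$ contains an undefined symbol $A^j_{\,l}$; your "reading off coefficients yields exactly the stated tensors" therefore needs to be replaced by an actual computation producing the corrected coefficients. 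Second, your support argument proves the inclusion $\spt_x\mathring R_{(N)}\subseteq\spt_x G$, which is what is actually used in the paper, rather than the equality claimed in the statement; that discrepancy is in the lemma, not in your argument.
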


We give the definition of mixed time-spatial Besov spaces, the so-called Lerner-Chemin spaces.
\begin{definition}[\cite{BCD11, MWZ}]Let $T>0$, $s\in\R$ and $1\le r,p,q\le\infty$. The mixed  time-spatial Besov spaces ${L}^r_TB^s_{p,q}$ consists of all $u\in\mathcal{S}'$ satisfying
\begin{align*}
\|u\|_{\widetilde{L}^r_TB^s_{p,q}(\R^d)}\overset{\text{def}}{=}\Big{\|}(2^{js}\|\Delta_j u\|_{L^r([0,T];L^p(\R^d))})_{j\in\ZZ}\Big{\|}_{\ell^q(\ZZ)}<\infty,
\end{align*}
where $\Delta_j$ is localization nonhomogeneous operator from the Littlewood-Paley decomposition theory. Particularly, $H^s(\R^d)\sim B^s_{2,2}(\R^d)$.
\end{definition}
We present a result describing  the smoothing effect of the heat flow  in the context of Besov spaces.
\begin{lemma}[\cite{BCD11, MWZ}]\label{heat}Let $s\in \R$ and $1\le r_1, r_2, p, q\le \infty$ with $r_2\le r_1$. Consider the heat equation
\begin{align*}
\partial_t u-\Delta u=f,\qquad
  u(0,x)=u_0(x).
  \end{align*}
Assume that $u_0\in  B^s_{p,q}(\R^d)$ and $f\in \widetilde {L}^{r_2}_t( B^{s-2+\frac{2}{r_2}}_{p,q}(\R^d))$. Then the above equation has a unique solution $u\in \widetilde{L}^{r_1}_t( \dot B^{s+\frac{2}{r_1}}_{p,q}(\R^d))$
satisfying
\[\|u\|_{\widetilde{L}^{r_1}_T(  B^{s+\frac{2}{r_1}}_{p,q}(\R^d))}\le C(1+T)\big(\|u_0\|_{ B^s_{p,q}(\R^d)}+\|f\|_{\widetilde{L}^{r_2}_T( \dot B^{s-2+\frac{2}{r_2}}_{p,q}(\R^d))}\big),\]
where $C$ is a universal constant.
\end{lemma}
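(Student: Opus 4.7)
The plan is the standard Duhamel plus Littlewood--Paley argument for parabolic equations in Chemin--Lerner spaces. First, I would write the solution via Duhamel's formula
\[
u(t)=e^{t\Delta}u_{0}+\int_{0}^{t}e^{(t-s)\Delta}f(s)\,\mathrm{d}s,
\]
apply the dyadic block $\Delta_{j}$ to both sides, and then use the fact that $\Delta_{j}$ commutes with the heat semigroup. Uniqueness follows from the same identity applied to the difference of two solutions together with the linear heat estimate below, so I would focus on the quantitative bound.

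The heart of the argument is the frequency--localized heat estimate: there exist universal constants $c,C>0$ such that, for every $j\ge 0$ and every $g\in\mathcal{S}'(\R^{d})$,
\[
\|e^{t\Delta}\Delta_{j}g\|_{L^{p}}\le C e^{-c t 2^{2j}}\|\Delta_{j}g\|_{L^{p}},
\]
while for the low-frequency block $\Delta_{-1}$ one only has the trivial bound $\|e^{t\Delta}\Delta_{-1}g\|_{L^{p}}\le C\|\Delta_{-1}g\|_{L^{p}}$. Applied to the homogeneous part $e^{t\Delta}u_{0}$, taking the $L^{r_{1}}(0,T)$ norm in $t$ yields, for $j\ge 0$,
\[
\|e^{t\Delta}\Delta_{j}u_{0}\|_{L^{r_{1}}_{T}L^{p}}\lesssim 2^{-2j/r_{1}}\|\Delta_{j}u_{0}\|_{L^{p}},
\]
while for $j=-1$ one absorbs the time integration into the factor $(1+T)^{1/r_{1}}\le 1+T$. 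Multiplying by $2^{j(s+2/r_{1})}$ and taking $\ell^{q}(j)$ produces the bound by $(1+T)\|u_{0}\|_{B^{s}_{p,q}}$.

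For the Duhamel integral I would apply the same frequency--localized estimate inside the convolution, giving
\[
\|\Delta_{j}u(t)\|_{L^{p}}\lesssim \int_{0}^{t}e^{-c(t-s)2^{2j}}\|\Delta_{j}f(s)\|_{L^{p}}\,\mathrm{d}s\quad(j\ge 0),
\]
and then use Young's convolution inequality in time with exponents $1+1/r_{1}=1/r_{2}+1/r$, which is solvable precisely because $r_{2}\le r_{1}$. This costs a factor $\|e^{-c\cdot 2^{2j}}\|_{L^{r}(0,\infty)}\lesssim 2^{-2j/r}=2^{j(-2+2/r_{2}-2/r_{1})}$, which combines with the target exponent $2^{j(s+2/r_{1})}$ to produce exactly $2^{j(s-2+2/r_{2})}$ on $\Delta_{j}f$; taking $\ell^{q}$ gives the Chemin--Lerner norm of $f$. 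The low-frequency block $j=-1$ is again handled by the trivial bound and absorbs another $(1+T)$ factor.

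The main (minor) obstacle is bookkeeping the low-frequency block $\Delta_{-1}$, which lacks the exponential decay and is exactly what forces the prefactor $C(1+T)$ rather than a $T$-independent constant; one must also be careful that the Young exponent $r$ defined by $1/r=1-1/r_{2}+1/r_{1}$ lies in $[1,\infty]$, which is precisely guaranteed by the hypothesis $r_{2}\le r_{1}$. Once these details are in place, summing the two contributions and taking $\ell^{q}$ yields the stated inequality.
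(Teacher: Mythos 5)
The paper does not prove this lemma --- it is quoted from \cite{BCD11, MWZ} --- and your argument is precisely the standard proof found there: Duhamel's formula, the frequency-localized decay estimate $\|e^{t\Delta}\Delta_j g\|_{L^p}\lesssim e^{-ct2^{2j}}\|\Delta_j g\|_{L^p}$ for $j\ge 0$, Young's inequality in time with the exponent $1/r=1+1/r_1-1/r_2\in[0,1]$ guaranteed by $r_2\le r_1$, and the low-frequency block $\Delta_{-1}$ accounting for the $(1+T)$ prefactor. The exponent bookkeeping ($2^{-2j/r}$ combining with $2^{j(s+2/r_1)}$ to give $2^{j(s-2+2/r_2)}$) is correct, so the proposal is sound and essentially identical to the reference proof.
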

\begin{proposition}\label{posed-COU}Consider the Cauchy problem of the following system
\begin{equation}
\left\{ \begin{alignedat}{-1}
&\del_t u-\Delta u+\Div (u\ootimes u)+\Div(u\ootimes v)\\
&+\Div(v\ootimes u) +x_2\partial_1u+(u_2, 0,0) +\nabla p  = f,
\\
 & \Div\, u = 0,
  \\
  & u |_{t=0}= u_0.
\end{alignedat}\right.
 \label{e:Coutee-NS}
\end{equation}
Assume that $u_0\in  B^{\frac{1}{2}}_{2,1}, v\in \widetilde L^{\infty}B^{\frac{1}{2}}_{2,1}$ and  $f\in  \widetilde L^{\infty}B^{-\frac{3}{2}}_{2,1}$. Then there  exist $\varepsilon$ and $T\le \frac{1}{4}$ such that if
\begin{align}\label{con-S}
   \|u_0\|_{ B^{\frac{1}{2}}_{2,1}}+\|v\|_{\widetilde L^{\infty}_TB^{1/2}_{2,1}}+\|f\|_{\widetilde L^{\infty}_TB^{-3/2}_{2,1}}\le \varepsilon,
\end{align}
the system \eqref{e:Coutee-NS} has a solution $u\in  \widetilde L^{\infty}([0, T); B^{1/2}_{2,1})$ with
\begin{align*}
    \|u\|_{\widetilde L^{\infty}_T B^{\frac{1}{2}}_{2,1}}\lesssim \|u_0\|_{ B^{\frac{1}{2}}_{2,1}}+\|f\|_{\widetilde L^{\infty}_TB^{-3/2}_{2,1}}.
\end{align*}
\end{proposition}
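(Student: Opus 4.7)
The plan is to prove Proposition \ref{posed-COU} by a Banach fixed-point argument in the critical Chemin--Lerner space $X_T := \widetilde L^\infty_T B^{1/2}_{2,1}(\R^3)$, which is scale-invariant for 3D Navier--Stokes and is a critical algebra. Rather than placing the unbounded coefficient $x_2$ inside a function-space norm, I would absorb the Couette transport $-x_2\partial_1$ into the linear generator by working with the Couette--heat semigroup $\mathcal{S}(t) := e^{t(\Delta - x_2\partial_1)}$. Its action is explicit in Fourier variables, $\widehat{\mathcal{S}(t)u_0}(k) = \exp\bigl(-\int_0^t|k(s)|^2\dd s\bigr)\hat u_0(k_1, k_2 + tk_1, k_3)$ along the backward characteristic $k(s) = (k_1, k_2 + (t-s)k_1, k_3)$; on the short window $T\le 1/4$ the frequency shear $k_2\mapsto k_2+tk_1$ moves each dyadic annulus $\{|k|\sim 2^j\}$ only within a universal dilation, so the Gaussian damping $e^{-c t 2^{2j}}$ survives and $\mathcal{S}(t)$ satisfies Chemin--Lerner smoothing estimates strictly analogous to those for $e^{t\Delta}$ in Lemma \ref{heat}. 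This yields the integral formulation
\begin{equation*}
u(t) = \mathcal{S}(t)u_0 + \int_0^t \mathcal{S}(t-s)\,\mathcal{P}\bigl[-\Div(u\ootimes u + u\ootimes v + v\ootimes u) - (u_2, 0, 0) + f\bigr](s)\dd s =: \mathcal{T}(u)(t),
\end{equation*}
and I would show that $\mathcal{T}$ is a contraction on the small ball $B_R := \{u\in X_T : \|u\|_{X_T}\le R\}$.

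For the bilinear divergence terms I would invoke the Bony paraproduct/algebra estimate $\|uu'\|_{B^{1/2}_{2,1}} \lesssim \|u\|_{B^{1/2}_{2,1}}\|u'\|_{B^{1/2}_{2,1}}$; the extra spatial derivative from $\Div$ is compensated by the one-derivative gain of $\mathcal{S}$, producing bounds of type $(R + \|v\|_{X_T})\|u\|_{X_T}$. The initial-data and forcing pieces contribute $\lesssim \|u_0\|_{B^{1/2}_{2,1}}$ and $\lesssim \|f\|_{\widetilde L^\infty_T B^{-3/2}_{2,1}}$ respectively, via the $\mathcal{S}$-analog of Lemma \ref{heat} with regularity gain of two. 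The stretching term $(u_2, 0, 0)$ is merely a lower-order linear coupling, handled through the embedding $B^{1/2}_{2,1}\hookrightarrow B^{-3/2}_{2,1}$ and time integration, giving $\lesssim T\|u\|_{X_T}$, which is small for $T\le 1/4$. Combining these bounds (and their parallel Lipschitz analogues for $\mathcal{T}(u_1)-\mathcal{T}(u_2)$) yields
\begin{equation*}
\|\mathcal{T}(u)\|_{X_T} \le C\bigl(\|u_0\|_{B^{1/2}_{2,1}} + \|f\|_{\widetilde L^\infty_T B^{-3/2}_{2,1}}\bigr) + C\bigl(R + \|v\|_{X_T} + T\bigr)\|u\|_{X_T}.
\end{equation*}
Under \eqref{con-S} with $\varepsilon$ small enough, choosing $R \sim \varepsilon$ and $T\le 1/4$ forces the perturbative factor to be at most $1/2$, so $\mathcal{T}\colon B_R \to B_R$ contracts, and the Banach fixed-point theorem delivers the unique solution with the stated bound.

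The main obstacle is rigorously establishing the analog of Lemma \ref{heat} for $\mathcal{S}(t)$: since the shear does not commute with Littlewood--Paley projectors, one must verify quantitatively that for $t\in[0,1/4]$ a Fourier mode initially in $\{|k|\sim 2^j\}$ stays in $\{|k(s)|\sim 2^j\}$ throughout the interval (up to universal constants), so that $\int_0^t|k(s)|^2\dd s \gtrsim t\,2^{2j}$ reproduces the heat-dissipation bound dyadic-block by dyadic-block. Once this frequency-localization property is in place, the Chemin--Lerner smoothing estimates for $\mathcal{S}(t)$ follow exactly as in the classical proof of Lemma \ref{heat}, and the remainder of the argument is the routine Kato-type fixed-point scheme outlined above.
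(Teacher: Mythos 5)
Your overall strategy is the same as the paper's: both proofs rest on the observation that for $t\le \tfrac14$ the Couette shear moves each dyadic frequency annulus only within a universal dilation, so the heat-type smoothing of Lemma \ref{heat} survives in the Chemin--Lerner space $\widetilde L^\infty_T B^{1/2}_{2,1}$, after which a standard Kato-type contraction in this critical algebra (with the lift term $(u_2,0,0)$ absorbed by a factor $T$ and the coupling to $v$ by the smallness $\varepsilon$) closes the estimate. The paper implements this via the change of variables $\bar X=(x_1-tx_2,x_2,x_3)$, the twisted Laplacian $\Delta_L$ with symbol bounded below by $\tfrac34|\bar\xi|^2$, the adapted projector $\mathbb{P}_{H_L}={\rm Id}-\nabla_L\Div_L/\Delta_L$, and an equivalence of Besov norms between the two coordinate systems; you work instead directly with the semigroup $\mathcal{S}(t)=e^{t(\Delta-x_2\partial_1)}$, which is the same object viewed on the Fourier side.

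There is, however, one genuine gap in your formulation: the generator $\Delta-x_2\partial_1$ does not preserve the divergence-free constraint, so your mild formulation is not equivalent to the system \eqref{e:Coutee-NS}. Indeed, if $\partial_t w=\Delta w-x_2\partial_1 w$ then $\partial_t(\Div w)=\Delta(\Div w)-x_2\partial_1(\Div w)-\partial_1 w_2$, so even for solenoidal data $\Div(\mathcal{S}(t)u_0)\neq 0$ in general (equivalently, on the Fourier side the constraint $k\cdot\hat u=0$ is not transported to $(k_1,k_2+tk_1,k_3)\cdot\hat u_0=0$). Consequently the fixed point of your map $\mathcal{T}$ need not be divergence-free, and applying the standard Leray projector $\mathcal{P}$ only to the Duhamel integrand does not correctly account for the pressure generated by the non-solenoidal part of $x_2\partial_1 u$ (note $\Div(x_2\partial_1 u)=\partial_1 u_2\neq 0$). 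The repair is exactly what the paper does: conjugate by the shear so that the transport term disappears, the incompressibility constraint becomes the time-dependent condition $\nabla_L\cdot\bar u=0$, and the Helmholtz projector is taken with respect to $\nabla_L$; alternatively one must build the pressure response into the linear propagator from the start. With that correction, your frequency-localization lemma for the sheared annuli and the remaining product and smallness estimates go through and coincide with the paper's argument.
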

 \begin{proof}We denote $X=(x_1, x_2, x_3)$. By introducing new variables
 $$\bar{X}=(\bar{x}_1,\bar{x}_2,\bar{x}_3)=(x_1-tx_2, x_2, x_3)$$
 and setting $\bar{u}(t, \bar{X})=u(t, X)$ as shown in \cite{CWZ}, we have
\begin{equation}
\left\{ \begin{alignedat}{-1}
  &\del_t \bar{u}-\Delta_L\bar{u}+\nabla_L \cdot (\bar{u}\ootimes \bar{u})+\nabla_L \cdot(\bar{u}\ootimes\bar{v})\\
&\qquad+\nabla_L \cdot(\bar{v}\ootimes \bar{u})+ (\bar{u}_2, 0,0) +\nabla_{L}\, \bar{p} = \bar{f},  \\
&\nabla_L \cdot \bar{u}=0,\\
&\bar{u} |_{t=0}= u_0,
\end{alignedat}\right.
 \label{e:wtq-L}
\end{equation}
where $\nabla_L=(\partial_{\bar{x}_1}, \partial_{\bar{x}_2}-t\partial_{\bar{x}_1}, \partial_{\bar{x}_3})$, $\Delta_{L}=\nabla_L\cdot \nabla_L$,  $\ublqnl(t, \bar{X})=\ulqnl(t, X)$, $\bar{F}_{q+1}(t, \bar{X})={F}_{q+1}(t, {X})$. Under $\bar{X}$-coordinates, we define $ \mathbb{P}_{H_L}$  the Helmholtz  projector onto divergence-free vector fields by $\mathbb{P}_{H_L}={\rm Id}-\frac{\nabla_L \Div_L}{\Delta_L}$. Then we obtain from \eqref{e:wtq-L} that
\begin{align}
  \bar{u}(t,\bar{X})=&e^{t\Delta_L}u_0+\int_0^t e^{(t-s)\Delta_L}\mathbb{P}_{H_L}(\nabla_L \cdot (\bar{u}\ootimes \bar{u})+\nabla_L \cdot(\bar{u}\ootimes\bar{v}))\dd s\nonumber\\
   &+\int_0^t e^{(t-s)\Delta_L}\mathbb{P}_{H_L}(\nabla_L \cdot(\bar{v}\ootimes\bar{u})+(\bar{u}_2, 0,0)-\bar{f})\dd s.\label{integral}
\end{align}
Note that for $0\le t\le \frac{1}{4}$,
\begin{align*}
   \widehat{e^{t\Delta_L}}=e^{-t((1+t^2)|\bar{\xi}_1|^2+|\bar{\xi}_2|^2+|\bar{\xi}_3|^2-2t\bar{\xi}_1\bar{\xi}_2)}\le e^{-\frac{3}{4}t(|\bar{\xi}_1|^2+|\bar{\xi}_2|^2+|\bar{\xi}_3|^2)},
\end{align*}
by the smoothing effect of heat flow  \cite[Lemma 2.4]{BCD11}, one infers that there exist constants $c$ and $C$ such that if $\spt_{\bar{\xi}}\,\widehat {u}\subseteq \lambda \mathcal {C}$, where $\mathcal {C}$ is an annulus, then
\begin{align*}
    \|e^{t\Delta_L} u\|_{L^p_{\bar{X}}}\le C e^{-ct\lambda^2}\|u\|_{L^p_{\bar{X}}}.
\end{align*}
Therefore, for $0< T\le \frac{1}{4}$, we deduce from Lemma \ref{heat} that
\begin{align}
   \|\bar{u}\|_{\widetilde L^\infty_t { \bf B}^{\frac{1}{2}}_{2,1}}\lesssim& \|u_0\|_{{\bf B}^{\frac{1}{2}}_{2,1}}+\|\bar{u}\ootimes \bar{u}\|_{\widetilde L^\infty_t {\bf B}^{-\frac{1}{2}}_{2,1}}+\|\bar{u}\ootimes\bar{v}\|_{\widetilde L^\infty_t {\bf B}^{-\frac{1}{2}}_{2,1}}\nonumber\\
   &+\|\bar{v}\ootimes\bar{u}\|_{\widetilde L^\infty_t {\bf B}^{-\frac{1}{2}}_{2,1}}+\|\bar{u}\|_{\widetilde L^1_t { \bf B}^{\frac{1}{2}}_{2,1}}+\|\bar{f}\|_{\widetilde L^\infty_t { \bf B}^{-\frac{3}{2}}_{2,1}}\nonumber\\
\lesssim&  \|u_0\|_{{\bf B}^{\frac{1}{2}}_{2,1}}+\|\bar{u}\|^2_{\widetilde L^\infty_t { \bf B}^{\frac{1}{2}}_{2,1}}+\|\bar{v}\|_{\widetilde L^\infty_t { \bf B}^{\frac{1}{2}}_{2,1}}\|\bar{u}\|_{\widetilde L^\infty_t { \bf B}^{\frac{1}{2}}_{2,1}}+\|\bar{u}\|_{\widetilde L^1_t { \bf B}^{\frac{1}{2}}_{2,1}}+\|\bar{f}\|_{\widetilde L^\infty_t { \bf B}^{-\frac{3}{2}}_{2,1}},\label{bw-B}
\end{align}
here
\begin{align*}
\|\bar{u}\|_{\widetilde{L}^r_T{ \bf B}^s_{p,q}}\overset{\text{def}}{=}\Big{\|}(2^{js}\|\bar{\Delta}_j \bar{u}\|_{L^r([0,T];L^p_{\bar{X}})})_{j\in\ZZ}\Big{\|}_{\ell^q(\ZZ)}<\infty,
\end{align*}
where $\bar{\Delta}_j $ is localization nonhomogeneous operator from the Littlewood-Paley decomposition theory under $\bar{X}$--coordinates. Then by the definition of $\bar{Y}$, we have for $j\ge 0$,
\begin{align*}
    \|\bar{\Delta}_j \bar{u}\|_{L^2_{\bar{X}}}=&\Big{\|}\int h_j(\bar{X}-\bar{Y})\bar{u}(t,\bar{Y})\dd \bar{Y}\Big{\|}_{L^2_{\bar{X}}}\\
    =&\Big{\|}\int h_j(x_1-y_1-t(x_2-y_2), x_2-y_2, x_3-y_3))u(t,{Y})\dd {Y}\Big{\|}_{L^2_{{X}}}\\
    =& \Big{\|}\mathcal{F}\Big(\int h_j(x_1-y_1-t(x_2-y_2), x_2-y_2, x_3-y_3))u(t,{Y})\dd {Y}\Big)(\xi)\Big{\|}_{L^2_{{\xi}}}.
\end{align*}
By the definition of $\bar{X}$ and  $\bar{Y}$, we have
\begin{align*}
 &\mathcal{F}\Big(\int h_j(x_1-y_1-t(x_2-y_2), x_2-y_2, x_3-y_3))u(t,{Y})\dd {Y}\Big)(\xi)\\
 =&\int_{\R^3}\int_{\R^3}   h_j(x_1-y_1-t(x_2-y_2), x_2-y_2, x_3-y_3))u(t,{Y})\dd {Y} e^{-\ii 2\pi X\cdot\xi}\dd X\\
 =&\int_{\R^3}\int_{\R^3} h_j(\bar{X}-\bar{Y}) u(t,\bar{y}_1+t\bar{y}_2,\bar{y}_2,\bar{y}_3) e^{-\ii 2\pi\bar{X}\cdot(\xi_1, t\xi_1+\xi_2, \xi_3)}\dd \bar{Y}\dd \bar{X}\\
 =&\int_{\R^3}\int_{\R^3} h_j(\bar{X}-\bar{Y})e^{-\ii 2\pi (\bar{X}-\bar{Y})\cdot(\xi_1, t\xi_1+\xi_2, \xi_3)} u(t,\bar{y}_1+t\bar{y}_2,\bar{y}_2,\bar{y}_3) e^{-\ii 2\pi \bar{Y}\cdot (\xi_1, t\xi_1+\xi_2, \xi_3)}\dd \bar{X} \dd \bar{Y}\\
 =&{\varphi}_j(\xi_1, t\xi_1+\xi_2,\xi_3)\widehat{u}(t,\xi).
\end{align*}
Since $\spt\, {\varphi}_j(\xi)\subseteq 2^j\mathcal{C}(0,\frac{3}{4}, \frac{8}{3})$ with $\mathcal{C}(0,\frac{3}{4}, \frac{8}{3})=\{\xi\in\R^3| \frac{3}{4}\le |\xi|\le\frac{8}{3}\}$ and $\spt\, \widehat{\varphi}_j(\xi)\equiv 1$ for $\xi\in 2^j\mathcal{C}(0,\frac{6}{7}, \frac{12}{7})$, then for $t\le \frac{1}{4}$,
\begin{align*}
 2^j \mathcal{C}\big(0,\tfrac{9}{14}, \tfrac{9}{7}\big)\subseteq\spt\,{\varphi}_j(\xi_1, t\xi_1+\xi_2,\xi_3)\subseteq 2^j \mathcal{C}\big(0,\tfrac{9}{16}, \tfrac{7}{2}\big).
\end{align*}
Similarly,  one deduces that
\begin{align*}
  \|\bar{\Delta}_{-1} \bar{u}\|_{L^2_{\bar{X}}}=\|\chi(\xi_1, t\xi_1+\xi_2,\xi_3)\widehat{u}(t,\xi)\|_{L^2_{\xi}}, \quad B(0,\tfrac{9}{16})\subseteq\,\spt \chi(\xi_1, t\xi_1+\xi_2,\xi_3)\subseteq B(0, \tfrac{7}{4}).
\end{align*}
These facts imply that there exist another two  dyadic partition of unity $\{\widetilde{\chi}(\xi), \widetilde{\varphi}(\xi)\}$ and $\{\widetilde{\widetilde{\chi}}(\xi), \widetilde{\widetilde{\varphi}}(\xi)\}$ such that
\begin{align*}
    &\|\widetilde{\Delta}_j u\|_{L^2_X} \lesssim  \|\bar{\Delta}_j \bar{u}\|_{L^2_{\bar{X}}}\lesssim \|\widetilde{\widetilde{\Delta}}_j u\|_{L^2_X}, \quad\|\widetilde{\Delta}_{-1} u\|_{L^2_X} \lesssim  \|\bar{\Delta}_{-1} \bar{u}\|_{L^2_{\bar{X}}}\lesssim \|\widetilde{\widetilde{\Delta}}_{-1}u\|_{L^2_X}
\end{align*}
Therefore $\|\bar{u}\|_{_{\widetilde{L}^r_T{ \bf B}^s_{p,q}}}\sim\|{u}\|_{_{\widetilde{L}^r_T{ B}^s_{p,q}}}$. Hence we infers from \eqref{con-S} and \eqref{bw-B}$_{t=T}$ that
\begin{align}
   \|{u}\|_{\widetilde L^\infty_T { B}^{\frac{1}{2}}_{2,1}}
\lesssim& \|u_0\|_{{B}^{-\frac{1}{2}}_{2,1}}+ \|{u}\|^2_{\widetilde L^\infty_T { B}^{\frac{1}{2}}_{2,1}}+
\varepsilon\|{u}\|_{\widetilde L^\infty_T { B}^{\frac{1}{2}}_{2,1}}+T\|{u}\|_{\widetilde L^\infty_T { B}^{\frac{1}{2}}_{2,1}}+\|{f}\|_{\widetilde L^\infty_T { B}^{-\frac{3}{2}}_{2,1}}.\nonumber
\end{align}
Therefore, as long as $\varepsilon$ and $T$ are small enough, $u$ solves \eqref{e:Coutee-NS} on $[0, T]$ with
\begin{align*}
   \|u\|_{\widetilde L^\infty_T { B}^{\frac{1}{2}}_{2,1}}\lesssim \|u_0\|_{{ B}^{-\frac{1}{2}}_{2,1}}+ \|f\|_{\widetilde L^\infty_T {  B}^{-\frac{3}{2}}_{2,1}}.
\end{align*}
Hence, we complete this proof.
\end{proof}


\section*{Acknowledgement}
The authors are very grateful to Alexey Cheskidov for valuable suggestions. This work was supported by the National Key Research and Development Program of China (No. 2022YFA1005700).

\end{document}